\numberwithin{equation}{section}
\numberwithin{figure}{section}
\theoremstyle{plain}
\newtheorem{thm}{Theorem}[section]
  \crefname{thm}{Theorem}{Theorems}
  \newtheorem{lem}[thm]{Lemma}
  \crefname{lem}{Lemma}{Lemmas}
  \newtheorem{prop}[thm]{Proposition}
  \crefname{prop}{Proposition}{Propositions}
  \newtheorem{cor}[thm]{Corollary}
	\crefname{cor}{Corollary}{Corollaries}
  \newtheorem*{theor1}{Theorem 1}
  \newtheorem*{theor2}{Theorem 2}
  \newtheorem*{ack}{Acknowledgments}
\theoremstyle{definition}
  \newtheorem{defi}[thm]{Definition}
  \crefname{defi}{Definition}{Definitions}
  \theoremstyle{remark}
  \crefname{ntn}{Notation}{Notations}
	 \theoremstyle{remark}
  \newtheorem{rem}[thm]{Remark}
  \crefname{rem}{Remark}{Remarks}
  \newtheorem{ex}[thm]{Example}
  \crefname{ex}{Example}{Examples}
\def\r{\mathbb{R}}
\def\c{\mathbb{C}}
\def\z{\mathbb{Z}}
\title[Newton--Okounkov polytopes and marked chain-order polytopes]{\fontsize{11pt}{11pt}\selectfont Newton--Okounkov polytopes of flag varieties and marked chain-order polytopes}
\date{}
\author{Naoki FUJITA}
\address[Naoki FUJITA]{Graduate School of Mathematical Sciences, The University of Tokyo, 3-8-1 Komaba, Meguro-ku, Tokyo 153-8914, Japan.}
\email{nfujita@ms.u-tokyo.ac.jp}
\subjclass[2010]{Primary 14M25; Secondary 05E10, 06A07, 14M15, 52B20}
\keywords{Newton--Okounkov body, marked chain-order polytope, flag variety, toric degeneration, essential basis}
\thanks{This work was supported by Grant-in-Aid for JSPS Fellows (No.\ 19J00123) and by JSPS Grant-in-Aid for Early-Career Scientists (No.\ 20K14281).} 
\begin{document}
\begin{abstract}
Marked chain-order polytopes are convex polytopes constructed from a marked poset, which give a discrete family relating a marked order polytope with a marked chain polytope. 
In this paper, we consider the Gelfand--Tsetlin poset of type $A$, and realize the associated marked chain-order polytopes as Newton--Okounkov bodies of the flag variety. 
Our realization connects previous realizations of Gelfand--Tsetlin polytopes and Feigin--Fourier--Littelmann--Vinberg polytopes as Newton--Okounkov bodies in a uniform way. 
As an application, we prove that the flag variety degenerates into the irreducible normal projective toric variety corresponding to a marked chain-order polytope. 
We also construct a specific basis of an irreducible highest weight representation which is naturally parametrized by the set of lattice points in a marked chain-order polytope. 
\end{abstract}

\maketitle
\tableofcontents 

\section{Introduction}\label{s:intro}

A Newton--Okounkov body $\Delta(X, \mathcal{L}, \nu)$ is a convex body defined from a projective variety $X$ with a globally generated line bundle $\mathcal{L}$ on $X$ and with a higher rank valuation $\nu$ on the function field $\c(X)$. 
It was originally introduced by Okounkov \cite{Oko1, Oko2, Oko3} to study multiplicity functions for representations of a reductive group, and afterward developed independently by Lazarsfeld--Mustata \cite{LM} and by Kaveh--Khovanskii \cite{KK1, KK2} as a generalization of the notion of Newton polytopes for toric varieties to arbitrary projective varieties. 
A remarkable application is that the theory of Newton--Okounkov bodies gives a systematic construction of toric degenerations \cite{And} and completely integrable systems \cite{HK}. 
In the present paper, we restrict ourselves to the case of flag varieties.
In this case, the following specific polytopes can be realized as Newton--Okounkov bodies:
\begin{enumerate}
\item[(i)] Gelfand--Tsetlin polytopes \cite{Oko1, Oko2},
\item[(ii)] Berenstein--Littelmann--Zelevinsky's string polytopes \cite{Kav},
\item[(iii)] Nakashima--Zelevinsky polytopes \cite{FN},
\item[(iv)] Lusztig polytopes \cite{FaFL},
\item[(v)] Feigin--Fourier--Littelmann--Vinberg (FFLV) polytopes \cite{FeFL3, Kir1, Kir2},
\item[(vi)] polytopes constructed from extended $g$-vectors in cluster theory \cite{FO2},
\end{enumerate}
where the attached references are the ones giving such realizations.
Our aim of the present paper is to add a new family of convex polytopes to this list, that is, we realize the marked chain-order polytopes associated with the Gelfand--Tsetlin poset of type $A$ as Newton--Okounkov bodies of the flag variety. 

A marked chain-order polytope $\Delta_{\mathcal{C}, \mathcal{O}}(\Pi, \Pi^\ast, \lambda)$ is a convex polytope constructed from a marked poset $(\Pi, \Pi^\ast, \lambda)$ with a partition $\Pi \setminus \Pi^\ast = \mathcal{C} \sqcup \mathcal{O}$ under the assumption that $\Pi$ is finite and that $\Pi^\ast$ contains all maximal and minimal elements in $\Pi$.
It was introduced by Fang--Fourier \cite{FF} in some specific setting and by Fang--Fourier--Litza--Pegel \cite{FFLP} in general setting as a family relating the marked order polytope $\mathcal{O}(\Pi, \Pi^\ast, \lambda)$ with the marked chain polytope ${\mathcal C}(\Pi, \Pi^\ast, \lambda)$.
We consider this framework when $(\Pi, \Pi^\ast, \lambda)$ is the Gelfand--Tsetlin poset $(\Pi_A, \Pi_A^\ast, \lambda)$ of type $A$. 

To state our main result explicitly, let $G = SL_{n+1}(\c)$, $B \subseteq G$ the subgroup of upper triangular matrices, $W$ the Weyl group, and $P_+$ the set of dominant integral weights.
We denote by $X(w) \subseteq G/B$ the Schubert variety corresponding to $w \in W$, by $e \in W$ the identity element, and by $w_0 \in W$ the longest element. 
Note that the full flag variety $G/B$ coincides with the Schubert variety $X(w_0)$ corresponding to $w_0$. 
Let ${\bm i}_A = (i_1, i_2, \ldots, i_N)$ be a reduced word for $w_0$ given in \eqref{eq:reduced_word}, and set $w_{\geq k} \coloneqq s_{i_k} s_{i_{k+1}} \cdots s_{i_N}$ for $1 \leq k \leq N$. 
For $\lambda \in P_+$, denote by $\mathcal{L}_\lambda$ the corresponding globally generated line bundle on $X(w)$, by $V(\lambda)$ the irreducible highest weight $G$-module with highest weight $\lambda$, and by $(\Pi_A, \Pi_A^\ast, \lambda)$ the associated Gelfand--Tsetlin poset (see Figure \ref{type_A_marked_Hasse}). 
Then the marked order polytope $\mathcal{O}(\Pi_A, \Pi_A^\ast, \lambda)$ coincides with the Gelfand--Tsetlin polytope $GT(\lambda)$, which was introduced by Gelfand--Tsetlin \cite{GT} to parametrize a specific basis of $V(\lambda)$, called the Gelfand--Tsetlin basis. 
The Gelfand--Tsetlin polytope $GT(\lambda)$ can be realized as a Newton--Okounkov body $\Delta(G/B, \mathcal{L}_\lambda, \nu_{GT})$ up to translations (see \cite{Oko1, FO}), where $\nu_{GT}$ is a valuation given by counting the orders of zeros/poles along the following sequence of Schubert varieties:  
\[(G/B =)\ X(w_{\geq 1}) \supsetneq X(w_{\geq 2}) \supsetneq \cdots \supsetneq X(w_{\geq N}) \supsetneq X(e)\ (= B/B).\]
In addition, the marked chain polytope ${\mathcal C}(\Pi_A, \Pi_A^\ast, \lambda)$ coincides with the FFLV polytope $FFLV(\lambda)$, which was introduced by Feigin--Fourier--Littelmann \cite{FeFL1} and Vinberg \cite{Vin} to study the PBW (Poincar\'{e}--Birkhoff--Witt) filtration on $V(\lambda)$. 
Kiritchenko \cite{Kir1} proved that the FFLV polytope $FFLV(\lambda)$ coincides with the Newton--Okounkov body $\Delta(G/B, \mathcal{L}_\lambda, \nu_{FFLV})$ associated with a valuation $\nu_{FFLV}$ given by counting the orders of zeros/poles along a sequence of translated Schubert varieties (see \eqref{eq:definition_of_full_translation} for the precise definition). 
Feigin--Fourier--Littelmann \cite{FeFL3} also realized $FFLV(\lambda)$ as a Newton--Okounkov body $\Delta(G/B, \mathcal{L}_\lambda, \nu)$ using a different kind of valuation. 
In the present paper, we consider a sequence $X_\bullet^{\mathcal{C}, \mathcal{O}}$ of partially translated Schubert varieties defined from a partition $\Pi_A \setminus \Pi^\ast_A = \mathcal{C} \sqcup \mathcal{O}$; see \eqref{eq:definition_of_partial_translation} for the precise definition. 
The following is the main result of this paper. 

\begin{theor1}[{\cref{t:main_thm}}]
For each partition $\Pi_A \setminus \Pi^\ast_A = \mathcal{C} \sqcup \mathcal{O}$ and $\lambda \in P_+$, the marked chain-order polytope $\Delta_{\mathcal{C}, \mathcal{O}}(\Pi_A, \Pi^\ast_A, \lambda)$ coincides with a Newton--Okounkov body $\Delta(G/B, \mathcal{L}_\lambda, \nu_{\mathcal{C}, \mathcal{O}}^{\rm low})$ up to translations by integer vectors, where $\nu_{\mathcal{C}, \mathcal{O}}^{\rm low}$ is a valuation given by counting the orders of zeros/poles along the sequence $X_\bullet^{\mathcal{C}, \mathcal{O}}$.
\end{theor1}

Theorem 1 connects the previous realizations of Gelfand--Tsetlin polytopes \cite{Oko1, FO} and FFLV polytopes \cite{Kir1} as Newton--Okounkov bodies in a uniform way. 
As an application of Theorem 1, we construct a specific $\c$-basis of $V(\lambda)$ which is naturally parametrized by the set of lattice points in $\Delta_{\mathcal{C}, \mathcal{O}}(\Pi_A, \Pi^\ast_A, \lambda)$ (see \cref{t:explicit_basis}).
This basis is an analog of an \emph{essential basis} introduced by Feigin--Fourier--Littelmann \cite{FeFL1, FeFL3} and by Fang--Fourier--Littelmann \cite{FaFL}.
Let $P_{++}$ denote the set of regular dominant integral weights.
We also use Theorem 1 to prove that Anderson's construction \cite{And} of toric degenerations can be applied to $\Delta_{\mathcal{C}, \mathcal{O}}(\Pi_A, \Pi^\ast_A, \lambda)$ as follows. 

\begin{theor2}[{\cref{t:toric_deg}}]
For each partition $\Pi_A \setminus \Pi^\ast_A = \mathcal{C} \sqcup \mathcal{O}$ and $\lambda \in P_{++}$, there exists a flat degeneration of $G/B$ to the irreducible normal projective toric variety corresponding to the marked chain-order polytope $\Delta_{\mathcal{C}, \mathcal{O}}(\Pi_A, \Pi^\ast_A, \lambda)$.
\end{theor2}

Our proof of Theorem 2 shows that we can apply Harada--Kaveh's theory \cite{HK} to the marked chain-order polytope $\Delta_{\mathcal{C}, \mathcal{O}}(\Pi_A, \Pi^\ast_A, \lambda)$ for each partition $\Pi_A \setminus \Pi^\ast_A = \mathcal{C} \sqcup \mathcal{O}$ and $\lambda \in P_{++}$, which implies that there exists a completely integrable system on $G/B$ whose image coincides with $\Delta_{\mathcal{C}, \mathcal{O}}(\Pi_A, \Pi^\ast_A, \lambda)$.
We also study the highest term valuation $\nu_{\mathcal{C}, \mathcal{O}}^{\rm high}$ defined from a partition $\Pi_A \setminus \Pi^\ast_A = \mathcal{C} \sqcup \mathcal{O}$; see Section \ref{ss:NO_body_flag} for the precise definition. 
Then the Newton--Okounkov body $\Delta(G/B, \mathcal{L}_\lambda, \nu_{\mathcal{C}, \mathcal{O}}^{\rm high})$ is unimodularly equivalent to the Gelfand--Tsetlin polytope $GT(\lambda)$ for every partition $\Pi_A \setminus \Pi^\ast_A = \mathcal{C} \sqcup \mathcal{O}$ (see \cref{t:highest_term_valuation}). 
The situation is quite different from the case of $\nu_{\mathcal{C}, \mathcal{O}}^{\rm low}$ since the combinatorics of a marked chain-order polytope heavily depends on the choice of a partition $\Pi_A \setminus \Pi^\ast_A = \mathcal{C} \sqcup \mathcal{O}$. 

The present paper is organized as follows. 
In Section 2, we review some basic definitions and facts on flag varieties.  
Section 3 is devoted to recalling some basic facts on Kashiwara crystal bases for fundamental representations, which are used in the proof of our main result.
In Section 4, we recall the definition of marked chain-order polytopes, and review their Minkowski decomposition property proved in \cite{FFP}.
In Section 5, we recall the definition of Newton--Okounkov bodies, and define our main valuations. 
In Section 6, we prove Theorems 1, 2 above, and construct a specific basis of $V(\lambda)$ from the set of lattice points in a marked chain-order polytope.
We also discuss $\nu_{\mathcal{C}, \mathcal{O}}^{\rm high}$ in this section. 
Section 7 is devoted to studying the case of $Sp_{2n}(\c)$. 
Through a concrete example, we show that Theorem 1 above cannot be naturally extended to this case. 

\begin{ack}\normalfont
The author is grateful to Yunhyung Cho, Akihiro Higashitani, and Eunjeong Lee for helpful comments and fruitful discussions. 
\end{ack}

\section{Basic definitions on flag varieties}\label{s:flag_variety}

Let $G \coloneqq SL_{n+1}(\c)$ be the special linear group (of type $A_n$), and $\mathfrak{g} \coloneqq \mathfrak{sl}_{n+1}(\c)$ the Lie algebra of $G$. 
We identify the set $I$ of vertices of the Dynkin diagram with $\{1, 2, \ldots, n\}$ as follows:
\begin{align*}
&A_n\ \begin{xy}
\ar@{-} (50,0) *++!D{1} *\cir<3pt>{};
(60,0) *++!D{2} *\cir<3pt>{}="C"
\ar@{-} "C";(65,0) \ar@{.} (65,0);(70,0)^*!U{}
\ar@{-} (70,0);(75,0) *++!D{n-1} *\cir<3pt>{}="D"
\ar@{-} "D";(85,0) *++!D{n} *\cir<3pt>{}="E"
\end{xy}.
\end{align*}
Let us take a Borel subgroup $B \subseteq G$ to be the subgroup of upper triangular matrices. 
The quotient space $G/B$ is called the \emph{full flag variety}, which is a nonsingular projective variety. 
We denote by $H \subseteq B$ the subgroup of diagonal matrices, and by $\mathfrak{h} \subseteq \mathfrak{g}$ the Lie algebra of $H$.
Let $\mathfrak{h}^\ast \coloneqq {\rm Hom}_\c (\mathfrak{h}, \c)$ be the dual space of $\mathfrak{h}$, $\langle \cdot, \cdot \rangle \colon \mathfrak{h}^\ast \times \mathfrak{h} \rightarrow \c$ the canonical pairing, $P \subseteq \mathfrak{h}^\ast$ the weight lattice for $\mathfrak{g}$, and $P_+ \subseteq P$ the set of dominant integral weights.
For $\lambda \in P$, there uniquely exists a character $\tilde{\lambda} \colon H \rightarrow \c^\times$ of $H$ such that $d \tilde{\lambda} = \lambda$, where $\c^\times \coloneqq \c \setminus \{0\}$.
By composing this with the canonical projection $B \twoheadrightarrow H$, we obtain a character $\tilde{\lambda} \colon B \rightarrow \c^\times$ of $B$, which is also denoted by $\tilde{\lambda}$. 
For $\lambda \in P$, define a line bundle $\mathcal{L}_\lambda$ on $G/B$ as follows:
\begin{equation}\label{eq:line_bundle_type_A}
\begin{aligned}
\mathcal{L}_\lambda \coloneqq (G \times \mathbb{C})/B,
\end{aligned}
\end{equation}
where the right $B$-action on $G \times \mathbb{C}$ is given by $(g, c) \cdot b \coloneqq (g b, \tilde{\lambda}(b) c)$ for $g \in G$, $c \in \mathbb{C}$, and $b \in B$. 
If $\lambda \in P_+$, then the line bundle $\mathcal{L}_\lambda$ on $G/B$ is generated by global sections (see \cite[Proposition 1.4.1]{Bri}). 
For $\lambda \in P_+$, we denote by $V(\lambda)$ the irreducible highest weight $G$-module over $\c$ with highest weight $\lambda$ and with highest weight vector $v_\lambda$. 
Define a morphism $\rho_\lambda \colon G/B \rightarrow \mathbb{P}(V(\lambda))$ by
\[\rho_\lambda (g \bmod B) \coloneqq \c g v_\lambda.\] 
Then we have $\rho_\lambda ^\ast (\mathcal{O}(1)) = \mathcal{L}_\lambda$. 
Hence the morphism $\rho_\lambda$ induces a $\c$-linear map 
\[\rho_\lambda ^\ast \colon H^0(\mathbb{P}(V(\lambda)), \mathcal{O}(1)) \rightarrow H^0(G/B, \mathcal{L}_\lambda).\] 
Note that the space $H^0(\mathbb{P}(V(\lambda)), \mathcal{O}(1))$ of global sections is naturally identified with the dual $G$-module $V(\lambda)^\ast \coloneqq {\rm Hom}_\c (V(\lambda), \c)$. 
By the Borel--Weil theorem (see, for instance, \cite[Section 8.1.21 and Corollary 8.1.26]{Kum}), we know that the $\c$-linear map $\rho_\lambda ^\ast$ gives an isomorphism of $G$-modules from $V(\lambda)^\ast$ to $H^0(G/B, \mathcal{L}_\lambda)$.  
Hence, for $\sigma, \tau \in H^0(G/B, \mathcal{L}_\lambda) \setminus \{0\}$, the rational function $\sigma/\tau \in \c(G/B) \setminus \{0\}$ is given by
\begin{equation}\label{eq:rational_map_computation}
\begin{aligned}
(\sigma/\tau)(g \bmod B) = \sigma(g v_\lambda)/\tau(g v_\lambda)
\end{aligned}
\end{equation}
for $g \in G$ such that $\tau(g v_\lambda) \neq 0$. 
Denoting the normalizer of $H$ in $G$ by $N_G(H)$, the \emph{Weyl group} $W$ is defined to be the quotient group $N_G(H)/H$.
This group is generated by the set $\{s_i \mid i \in I\}$ of simple reflections. 
A sequence ${\bm i} = (i_1, \ldots, i_m) \in I^m$ is called a \emph{reduced word} for $w \in W$ if $w = s_{i_1} \cdots s_{i_m}$ and if $m$ is the minimum among such expressions of $w$. 
In this case, $m$ is called the \emph{length} of $w$, which is denoted by $\ell(w)$.

\begin{defi}[{see, for instance, \cite[Section I\hspace{-.1em}I.13.3]{Jan} and \cite[Definition 7.1.13]{Kum}}]\normalfont
For $w \in W$, we define a closed subvariety $X(w)$ of $G/B$ to be the Zariski closure of $B \widetilde{w} B/B$ in $G/B$, where $\widetilde{w} \in N_G(H)$ is a lift for $w \in W = N_G(H)/H$. 
This variety $X(w)$ is called a \emph{Schubert variety}.
\end{defi} 

For $w \in W$, the Schubert variety $X(w)$ is a normal projective variety, and we have $\dim_\c (X(w)) = \ell(w)$ (see, for instance, \cite[Sections I\hspace{-.1em}I.13.3, I\hspace{-.1em}I.14.15]{Jan}). 
Denoting the longest element in $W$ by $w_0$, we see that the Schubert variety $X(w_0)$ coincides with $G/B$. 
For $1 \leq i, j \leq n+1$, let $E_{i, j}$ denote the $(n+1) \times (n+1)$-matrix whose $(i, j)$-entry is $1$ and other entries are all $0$. 
Then we can take Chevalley generators $e_i, f_i, h_i \in \mathfrak{g}$, $i \in I$, as $e_i \coloneqq E_{i, i+1}$, $f_i \coloneqq E_{i+1, i}$, and $h_i \coloneqq E_{i, i} - E_{i+1, i+1}$. 
For $i \in I$, set
\begin{align*}
\overline{s}_i &\coloneqq \exp(f_i) \exp(-e_i) \exp(f_i)\\
&= E_{i+1, i} - E_{i, i+1} + \sum_{1 \leq j \leq n+1; j \neq i, i+1} E_{j, j} \in N_G(H),
\end{align*}
which is a lift for $s_i \in W$. 
For $w \in W = N_G(H)/H$, we define its lift $\overline{w} \in N_G(H)$ by 
\begin{equation}\label{eq:lifts_for_Weyl_group_elements}
\begin{aligned}
\overline{w} \coloneqq \overline{s}_{i_1} \overline{s}_{i_2} \cdots \overline{s}_{i_m},
\end{aligned}
\end{equation}
where $(i_1, i_2, \ldots, i_m)$ is a reduced word for $w$. 
The element $\overline{w}$ does not depend on the choice of a reduced word $(i_1, i_2, \ldots, i_m)$ for $w$. 
If $w$ is the identity element $e$, then the lift $\overline{e} \in N_G(H)$ is defined to be the identity matrix.
Denote by $\tau_\lambda \in H^0(G/B, \mathcal{L}_\lambda) \simeq V(\lambda)^\ast$ the unique lowest weight vector satisfying the condition that $\langle \tau_\lambda, v_\lambda \rangle = 1$.
Let $\{\varpi_i \mid i \in I\} \subseteq  P_+$ be the set of fundamental weights. 
For $k \in I$, the representations $V(\varpi_k)$ and $H^0(G/B, \mathcal{L}_{\varpi_k})$ are minuscule (see \cite[Section 13.4]{LR}), that is, we have
\begin{align*}
V(\varpi_k) = \sum_{w \in W} \c \overline{w} v_{\varpi_k}\quad \text{and}\quad H^0(G/B, \mathcal{L}_{\varpi_k}) = \sum_{w \in W} \c \overline{w} \tau_{\varpi_k}.
\end{align*}

\section{Crystal bases for fundamental representations}\label{s:crystal_basis}

In this section, we recall some basic facts on the Kashiwara crystal basis for a fundamental representation, which are used in Section \ref{ss:proof}.
See Kashiwara's survey \cite{Kas} for more details. 
For $k \in I$, the crystal basis $\mathcal{B}(\varpi_k)$ is a specific combinatorial skeleton of $V(\varpi_k)$, which is equipped with specific operators $\{\tilde{e}_i \mid i \in I\} \cup \{\tilde{f}_i \mid i \in I\}$, called \emph{Kashiwara operators}.
An explicit description of $\mathcal{B}(\varpi_k)$ using Young tableaux is given in \cite[Section 3.3]{KasNak}. 
More precisely, the crystal basis $\mathcal{B}(\varpi_k)$ is identified with 
\[\{(j_1, j_2, \ldots, j_k) \mid 1 \leq j_1 < j_2 < \cdots < j_k \leq n+1\}\]
as a set, where $(j_1, j_2, \ldots, j_k)$ corresponds to the Young tableau with only one column of $k$ boxes whose entries are $j_1, j_2, \ldots, j_k$. 
In addition, the actions of Kashiwara operators are given by
\begin{align*}
&\tilde{e}_i (j_1, j_2, \ldots, j_k) = 
\begin{cases} 
(j_1, \ldots, j_{\ell-1}, j_\ell-1, j_{\ell+1}, \ldots, j_k) &\text{if}\ j_\ell = i+1\ \text{for\ some}\ \ell\ \text{and}\ j_{\ell-1} \neq i,\\ 
0 &\text{otherwise}, 
\end{cases}\\
&\tilde{f}_i (j_1, j_2, \ldots, j_k) = 
\begin{cases} 
(j_1, \ldots, j_{\ell-1}, j_\ell+1, j_{\ell+1}, \ldots, j_k) &\text{if}\ j_\ell = i\ \text{for\ some}\ \ell\ \text{and}\ j_{\ell+1} \neq i+1,\\ 
0 &\text{otherwise}
\end{cases}
\end{align*}
for $1 \leq i \leq n$ and $(j_1, j_2, \ldots, j_k) \in \mathcal{B}(\varpi_k)$, where $0$ is an additional element which is not contained in $\mathcal{B}(\varpi_k)$, and we set $j_0 \coloneqq 0$, $j_{k+1} \coloneqq n+2$.
If we set 
\begin{align*}
\varepsilon_i (b) \coloneqq \max\{a \in \z_{\geq 0} \mid \tilde{e}_i^a b \neq 0\}\quad \text{and}\quad \varphi_i (b) \coloneqq \max\{a \in \z_{\geq 0} \mid \tilde{f}_i^a b \neq 0\}
\end{align*}
for $1 \leq i \leq n$ and $b \in \mathcal{B}(\varpi_k)$, then it follows that 
\begin{align*}
&\varepsilon_i (b) = 
\begin{cases} 
1 &\text{if}\ \tilde{e}_i b \neq 0,\\ 
0 &\text{otherwise}, 
\end{cases}\\
&\varphi_i (b) = 
\begin{cases} 
1 &\text{if}\ \tilde{f}_i b \neq 0,\\ 
0 &\text{otherwise}.
\end{cases}
\end{align*}

\begin{ex}
The crystal basis $\mathcal{B}(\varpi_1)$ is realized as follows: 
\begin{align*}
&(1) \xrightarrow{\tilde{f}_1} (2) \xrightarrow{\tilde{f}_2} \cdots \xrightarrow{\tilde{f}_{n-1}} (n) \xrightarrow{\tilde{f}_n} (n+1).
\end{align*}
\end{ex}

\begin{ex}
Let $G = SL_4(\c)$. 
Then the crystal basis $\mathcal{B}(\varpi_2)$ is realized as follows: 
\[\xymatrix{
 & & {(2, 3)} \ar[dr]^-{\tilde{f}_3} & & \\
{(1, 2)} \ar[r]^-{\tilde{f}_2} & {(1, 3)} \ar[ur]^-{\tilde{f}_1} \ar[dr]_-{\tilde{f}_3} & & {(2, 4)} \ar[r]^-{\tilde{f}_2} & {(3, 4).}\\
 & & {(1, 4)} \ar[ur]_-{\tilde{f}_1} & &
}\]
\end{ex}

Let $\{G^{\rm low}_{\varpi_k} (b) \mid b \in \mathcal{B}(\varpi_k)\} \subseteq V(\varpi_k)$ denote (the specialization at $q = 1$ of) the lower global basis (see \cite[Section 12.3]{Kas}), and $\{G^{\rm up}_{\varpi_k} (b) \mid b \in \mathcal{B}(\varpi_k)\} \subseteq H^0(G/B, \mathcal{L}_{\varpi_k}) \simeq V(\varpi_k)^\ast$ its dual basis. 
Then we have
\begin{align*}
e_i G^{\rm low}_{\varpi_k} (b) &= 
\begin{cases} 
c_1 G^{\rm low}_{\varpi_k} (\tilde{e}_i b) &\text{if}\ \tilde{e}_i b \neq 0,\\ 
0 &\text{otherwise},
\end{cases}\\
f_i G^{\rm low}_{\varpi_k} (b) &= 
\begin{cases} 
c_2 G^{\rm low}_{\varpi_k} (\tilde{f}_i b) &\text{if}\ \tilde{f}_i b \neq 0,\\ 
0 &\text{otherwise}
\end{cases}
\end{align*}
for some $c_1, c_2 \in \c^\times$ (see \cite[Section 12.4]{Kas}). 
If we write 
\[{\rm wt}(b) \coloneqq \sum_{1 \leq i \leq n} (\varphi_i (b) - \varepsilon_i (b)) \varpi_i\]
for $b \in \mathcal{B}(\varpi_k)$, then ${\rm wt}(b)$ coincides with the weight of $G^{\rm low}_{\varpi_k} (b)$ (see \cite[equation (4.3)]{Kas}). 
Let $b_{\varpi_k} \in \mathcal{B}(\varpi_k)$ denote the element corresponding to $(1, 2, \ldots, k)$. 
Then we have ${\rm wt}(b_{\varpi_k}) = \varpi_k$, which implies that $G^{\rm low}_{\varpi_k} (b_{\varpi_k}) \in \c^\times v_{\varpi_k}$. 
For $1 \leq i \leq n$, define $s_i \colon \mathcal{B}(\varpi_k) \rightarrow \mathcal{B}(\varpi_k)$ by
\[s_i b \coloneqq 
\begin{cases} 
\tilde{f}_i^{\langle {\rm wt}(b), h_i \rangle} b &\text{if}\ \langle {\rm wt}(b), h_i \rangle \geq 0,\\ 
\tilde{e}_i^{-\langle {\rm wt}(b), h_i \rangle} b &\text{otherwise}.
\end{cases}\]
Then we have $s_i^2 b = b$ and ${\rm wt}(s_i b) = s_i {\rm wt}(b)$ for all $b \in \mathcal{B}(\varpi_k)$ (see \cite[equation (11.2)]{Kas}).
Note that the Weyl group $W$ is isomorphic to the symmetric group $\mathfrak{S}_{n+1}$ by identifying the simple reflection $s_i$ with the transposition $(i\ i+1)$. 
Then $s_i (j_1, j_2, \ldots, j_k)$ is given by rearranging $s_i (j_1), s_i (j_2), \ldots, s_i (j_k)$ in ascending order for all $(j_1, j_2, \ldots, j_k) \in \mathcal{B}(\varpi_k)$. 
Since the representation $V(\varpi_k)$ is minuscule, it holds for $b \in \mathcal{B}(\varpi_k)$ that 
\[\overline{s}_i G^{\rm low}_{\varpi_k} (b) \in \c^\times G^{\rm low}_{\varpi_k} (s_i b).\]

\section{Marked chain-order polytopes}\label{s:marked poset}

In this section, we review some basic definitions and properties of marked chain-order polytopes, following \cite{FFLP, FFP}. 
Let $\Pi$ be a finite poset (i.e.\ a finite partially ordered set) equipped with a partial order $\preceq$, and $\Pi^\ast \subseteq \Pi$ a subset of $\Pi$ containing all minimal and maximal elements in $\Pi$. 
Take an element $\lambda = (\lambda_a)_{a \in \Pi^\ast} \in \r^{\Pi^\ast}$, called a \emph{marking}, such that $\lambda_a \leq \lambda_b$ if $a \preceq b$ in $\Pi$. 
The triple $(\Pi, \Pi^\ast, \lambda)$ is called a \emph{marked poset}.

\begin{defi}[{\cite[Section 1.3]{FFLP}}]
Fix a partition $\Pi \setminus \Pi^\ast = \mathcal{C} \sqcup \mathcal{O}$. 
Then the \emph{marked chain-order polytope} $\Delta_{\mathcal{C}, \mathcal{O}} (\Pi, \Pi^\ast, \lambda)$ is defined as follows: 
\begin{align*}
\Delta_{\mathcal{C}, \mathcal{O}} (\Pi, \Pi^\ast, \lambda) \coloneqq \{&(x_p)_{p \in \Pi \setminus \Pi^\ast} \in \r^{\Pi \setminus \Pi^\ast} \mid \ x_p \geq 0 \text{ for all}\ p \in \mathcal{C}, \\
&\sum_{i = 1}^\ell x_{p_i} \leq y_b - y_a \text{ for }a \prec p_1 \prec \cdots \prec p_\ell \prec b \text{ with }p_i \in \mathcal{C} \text{ and}\ a,b \in \Pi^\ast \sqcup \mathcal{O}\}, 
\end{align*}
where for $c \in \Pi^\ast \sqcup \mathcal{O}$, we set 
\[y_c \coloneqq 
\begin{cases} 
\lambda_c &\text{if}\ c \in \Pi^\ast,\\ 
x_c &\text{if}\ c \in \mathcal{O}. 
\end{cases}\]
\end{defi}

If $\lambda \in \z^{\Pi^\ast}$, then we see by \cite[Proposition 2.4]{FFLP} that $\Delta_{\mathcal{C}, \mathcal{O}}(\Pi, \Pi^\ast, \lambda)$ is an integral convex polytope for each partition $\Pi \setminus \Pi^\ast = \mathcal{C} \sqcup \mathcal{O}$. 
By definition, the marked chain-order polytope $\Delta_{\emptyset, \Pi \setminus \Pi^\ast} (\Pi, \Pi^\ast, \lambda)$ (resp., $\Delta_{\Pi \setminus \Pi^\ast, \emptyset} (\Pi, \Pi^\ast, \lambda)$) coincides with the marked order polytope $\mathcal{O}(\Pi, \Pi^\ast, \lambda)$ (resp., the marked chain polytope ${\mathcal C}(\Pi, \Pi^\ast, \lambda)$) introduced in \cite[Definition 1.2]{ABS}. 
Generalizing Stanley's transfer map \cite{Sta} for ordinary poset polytopes to marked poset polytopes, Ardila--Bliem--Salazar \cite[Theorem 3.4]{ABS} gave a \emph{transfer map} $\phi \colon \mathcal{O}(\Pi, \Pi^\ast, \lambda) \rightarrow {\mathcal C}(\Pi, \Pi^\ast, \lambda)$, which is an explicit bijective piecewise-affine map. 
Fang--Fourier--Litza--Pegel \cite{FFLP} constructed analogous transfer maps for marked chain-order polytopes. 
More precisely, they defined a piecewise-affine map $\phi_{\mathcal{C}, \mathcal{O}} \colon \r^{\Pi \setminus \Pi^\ast} \rightarrow \r^{\Pi \setminus \Pi^\ast}$, $(x_p)_p \mapsto (x_p^\prime)_p$, by 
\begin{align*}
&x_p^\prime \coloneqq 
\begin{cases}
\min(\{x_p-x_{p'} \mid p' \lessdot p, \ p' \in \Pi \setminus \Pi^\ast\} \cup \{x_p-\lambda_{p'} \mid p' \lessdot p, \ p' \in \Pi^\ast\}) &\text{if}\ p \in \mathcal{C}, \\
x_p &\text{otherwise}
\end{cases}
\end{align*}
for $p \in \Pi \setminus \Pi^\ast$, where for $p, q \in \Pi$, $q \lessdot p$ means that $p$ \emph{covers} $q$, that is, $q \prec p$ and there is no $q' \in \Pi \setminus \{p, q\}$ with $q \prec q' \prec p$. 
Note that $\phi_{\emptyset, \Pi \setminus \Pi^\ast}$ is the identity map, and that $\phi_{\Pi \setminus \Pi^\ast, \emptyset}$ is precisely Ardila--Bliem--Salazar's transfer map $\phi$.

\begin{thm}[{see \cite[Theorem 2.1 and Corollary 2.5]{FFLP}}]\label{t:marked_chain-order_Ehrhart}
For each partition $\Pi \setminus \Pi^\ast = \mathcal{C} \sqcup \mathcal{O}$, the piecewise-affine map $\phi_{\mathcal{C}, \mathcal{O}}$ gives a bijective map from $\mathcal{O}(\Pi, \Pi^\ast, \lambda)$ to $\Delta_{\mathcal{C}, \mathcal{O}}(\Pi, \Pi^\ast, \lambda)$, which induces a bijection between the sets of lattice points. 
As a consequence, the Ehrhart polynomial of $\Delta_{\mathcal{C}, \mathcal{O}}(\Pi, \Pi^\ast, \lambda)$ coincides with that of $\mathcal{O}(\Pi, \Pi^\ast, \lambda)$.
\end{thm}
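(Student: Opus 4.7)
The plan is to exhibit an explicit piecewise-affine inverse $\psi_{\mathcal{C},\mathcal{O}}$ to $\phi_{\mathcal{C},\mathcal{O}}$, verify that the pair swaps the defining inequalities of the two polytopes, and then read off the lattice-point statement from the fact that everything is written in terms of $\min$, $\max$, and integer-coefficient affine combinations of coordinates and of entries of $\lambda$. Concretely, I would define $\psi_{\mathcal{C},\mathcal{O}}$ by induction on the poset $\Pi$: set $x_p \coloneqq x^\prime_p$ whenever $p \in \mathcal{O}$, and set $x_p \coloneqq x^\prime_p + \max\{y_{p'} \mid p' \lessdot p\}$ whenever $p \in \mathcal{C}$, where $y_{p'} = \lambda_{p'}$ for $p' \in \Pi^\ast$ and $y_{p'} = x_{p'}$ otherwise is the value already constructed at lower height. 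A direct induction shows $\phi_{\mathcal{C},\mathcal{O}} \circ \psi_{\mathcal{C},\mathcal{O}}$ and $\psi_{\mathcal{C},\mathcal{O}} \circ \phi_{\mathcal{C},\mathcal{O}}$ are both the identity on $\r^{\Pi \setminus \Pi^\ast}$.

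For the inclusion $\phi_{\mathcal{C},\mathcal{O}}(\mathcal{O}(\Pi,\Pi^\ast,\lambda)) \subseteq \Delta_{\mathcal{C},\mathcal{O}}(\Pi,\Pi^\ast,\lambda)$, non-negativity $x^\prime_p \geq 0$ for $p \in \mathcal{C}$ follows from monotonicity of $(x_p)$ along covering relations. For a chain $a \prec p_1 \prec \cdots \prec p_\ell \prec b$ with $p_i \in \mathcal{C}$ and $a, b \in \Pi^\ast \sqcup \mathcal{O}$, pick for each $i$ a covering relation $q_{i-1} \lessdot p_i$ with $p_{i-1} \preceq q_{i-1}$ (setting $p_0 \coloneqq a$); then $x^\prime_{p_i} \leq x_{p_i} - y_{q_{i-1}} \leq x_{p_i} - y_{p_{i-1}}$ by monotonicity, and telescoping together with $x_{p_\ell} \leq y_b$ yields $\sum_i x^\prime_{p_i} \leq y_b - y_a$. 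For the reverse inclusion $\psi_{\mathcal{C},\mathcal{O}}(\Delta_{\mathcal{C},\mathcal{O}}(\Pi,\Pi^\ast,\lambda)) \subseteq \mathcal{O}(\Pi,\Pi^\ast,\lambda)$, it suffices to check $y_p \leq y_q$ on every covering relation $p \lessdot q$. The only non-automatic case is $p \in \mathcal{C}$ with $q \in \Pi^\ast \sqcup \mathcal{O}$: by iterating the inductive rule for $\psi_{\mathcal{C},\mathcal{O}}$, unwind $x_p$ as $x^\prime_p + x^\prime_{r_1} + \cdots + x^\prime_{r_s} + y_a$ along a chain $a \prec r_s \prec \cdots \prec r_1 \prec p$ obtained by repeatedly selecting a maximizer at each step, so that $r_j \in \mathcal{C}$ and $a \in \Pi^\ast \sqcup \mathcal{O}$; the chain-inequality of $\Delta_{\mathcal{C},\mathcal{O}}$ for $a \prec r_s \prec \cdots \prec r_1 \prec p \prec q$ then gives exactly $x_p - y_a \leq y_q - y_a$, hence $x_p \leq y_q$.

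Since $\phi_{\mathcal{C},\mathcal{O}}$ and $\psi_{\mathcal{C},\mathcal{O}}$ are built from $\min$, $\max$, and integral affine operations, they preserve $\z^{\Pi \setminus \Pi^\ast}$ when $\lambda \in \z^{\Pi^\ast}$, so the bijection restricts to the lattice-point sets. For the Ehrhart consequence, note that $k \cdot \mathcal{O}(\Pi,\Pi^\ast,\lambda) = \mathcal{O}(\Pi,\Pi^\ast,k\lambda)$ and $k \cdot \Delta_{\mathcal{C},\mathcal{O}}(\Pi,\Pi^\ast,\lambda) = \Delta_{\mathcal{C},\mathcal{O}}(\Pi,\Pi^\ast,k\lambda)$ for every $k \in \z_{>0}$, because all defining inequalities are homogeneous in the coordinates and in $\lambda$ jointly; applying the lattice-point bijection for the marking $k\lambda$ thus equates the two Ehrhart polynomials. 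The main obstacle is the third step: verifying $\psi_{\mathcal{C},\mathcal{O}}$ lands in $\mathcal{O}(\Pi,\Pi^\ast,\lambda)$ when $p \in \mathcal{C}$ covers some $q \in \Pi^\ast \sqcup \mathcal{O}$, because one must identify, uniformly in the partition $\mathcal{C} \sqcup \mathcal{O}$, the precise chain whose $\Delta_{\mathcal{C},\mathcal{O}}$-inequality collapses to the desired monotonicity, and this chain is exactly the one cut out by following the maximizers in the recursive definition of $\psi_{\mathcal{C},\mathcal{O}}$.
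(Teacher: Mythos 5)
Your argument is correct, but note that the paper does not prove this statement at all: it is quoted verbatim from Fang--Fourier--Litza--Pegel \cite{FFLP} (Theorem 2.1 and Corollary 2.5), so what you have written is a self-contained reproof of the cited result rather than a variant of an argument in the paper. Your route is essentially the standard transfer-map argument in the spirit of Stanley and Ardila--Bliem--Salazar, specialized to a fixed partition $\mathcal{C}\sqcup\mathcal{O}$: build the inverse $\psi_{\mathcal{C},\mathcal{O}}$ recursively (well defined since $\phi_{\mathcal{C},\mathcal{O}}$ only alters $\mathcal{C}$-coordinates using original values strictly below, and the recursion terminates because all minimal elements lie in $\Pi^\ast$), check the two inclusions by telescoping along chains and by unwinding along the chain of successive maximizers, and deduce the lattice and Ehrhart statements from integrality of the piecewise-affine maps together with $k\,\mathcal{O}(\Pi,\Pi^\ast,\lambda)=\mathcal{O}(\Pi,\Pi^\ast,k\lambda)$ and $k\,\Delta_{\mathcal{C},\mathcal{O}}(\Pi,\Pi^\ast,\lambda)=\Delta_{\mathcal{C},\mathcal{O}}(\Pi,\Pi^\ast,k\lambda)$. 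This buys a direct, elementary proof of exactly the statement used later; what \cite{FFLP} buys instead is greater generality (their Theorem 2.1 treats a continuous one-parameter family of marked poset polytopes, of which the chain-order polytopes are the vertices of the parameter cube, and the bijection here is a specialization). Two points you should make explicit to be airtight: the covering-relation cases you call ``automatic'' in the reverse inclusion use, for $p,q\in\Pi^\ast\sqcup\mathcal{O}$, the chain inequalities of $\Delta_{\mathcal{C},\mathcal{O}}$ with $\ell=0$ (this convention is indeed forced, since otherwise $\Delta_{\emptyset,\Pi\setminus\Pi^\ast}$ would not equal $\mathcal{O}(\Pi,\Pi^\ast,\lambda)$), and for $q\in\mathcal{C}$ they use $x'_q\geq 0$ together with the reconstruction formula; neither is a gap, but both deserve a sentence.
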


\begin{rem}
Under some condition on $(\Pi, \Pi^\ast, \lambda)$, the author and Higashitani \cite[Theorem 5.3]{FH} proved that the transfer map $\phi \colon \mathcal{O}(\Pi, \Pi^\ast, \lambda) \rightarrow {\mathcal C}(\Pi, \Pi^\ast, \lambda)$ can be described as a dual operation of a combinatorial mutation up to unimodular equivalence. 
This result is naturally extended to the map $\phi_{\mathcal{C}, \mathcal{O}} \colon \r^{\Pi \setminus \Pi^\ast} \rightarrow \r^{\Pi \setminus \Pi^\ast}$ under the same condition on $(\Pi, \Pi^\ast, \lambda)$. 
\end{rem}

For $\lambda \in P_+$ and $1 \leq k \leq n$, we write $\lambda_{\geq k} \coloneqq \sum_{k \leq \ell \leq n} \langle \lambda, h_\ell \rangle$. 
In this paper, we restrict ourselves to the \emph{Gelfand--Tsetlin poset} $(\Pi_A, \Pi^\ast_A, \lambda)$ of type $A_n$ whose marked Hasse diagram is given in Figure \ref{type_A_marked_Hasse}, where the circles (resp., the rectangles) denote the elements of $\Pi_A \setminus \Pi^\ast_A$ (resp., $\Pi^\ast_A$), and we write 
\[\Pi_A \setminus \Pi^\ast_A = \{q_j^{(i)} \mid 1 \leq i \leq n,\ 1 \leq j \leq n + 1 - i\}.\]
Note that the marking $(\lambda_a)_{a \in \Pi^\ast_A}$ is given as $(0, \lambda_{\geq n}, \ldots, \lambda_{\geq 2}, \lambda_{\geq 1})$, which is also denoted by $\lambda$.
\begin{figure}[!ht]
\begin{center}
   \includegraphics[width=10.0cm,bb=40mm 120mm 170mm 230mm,clip]{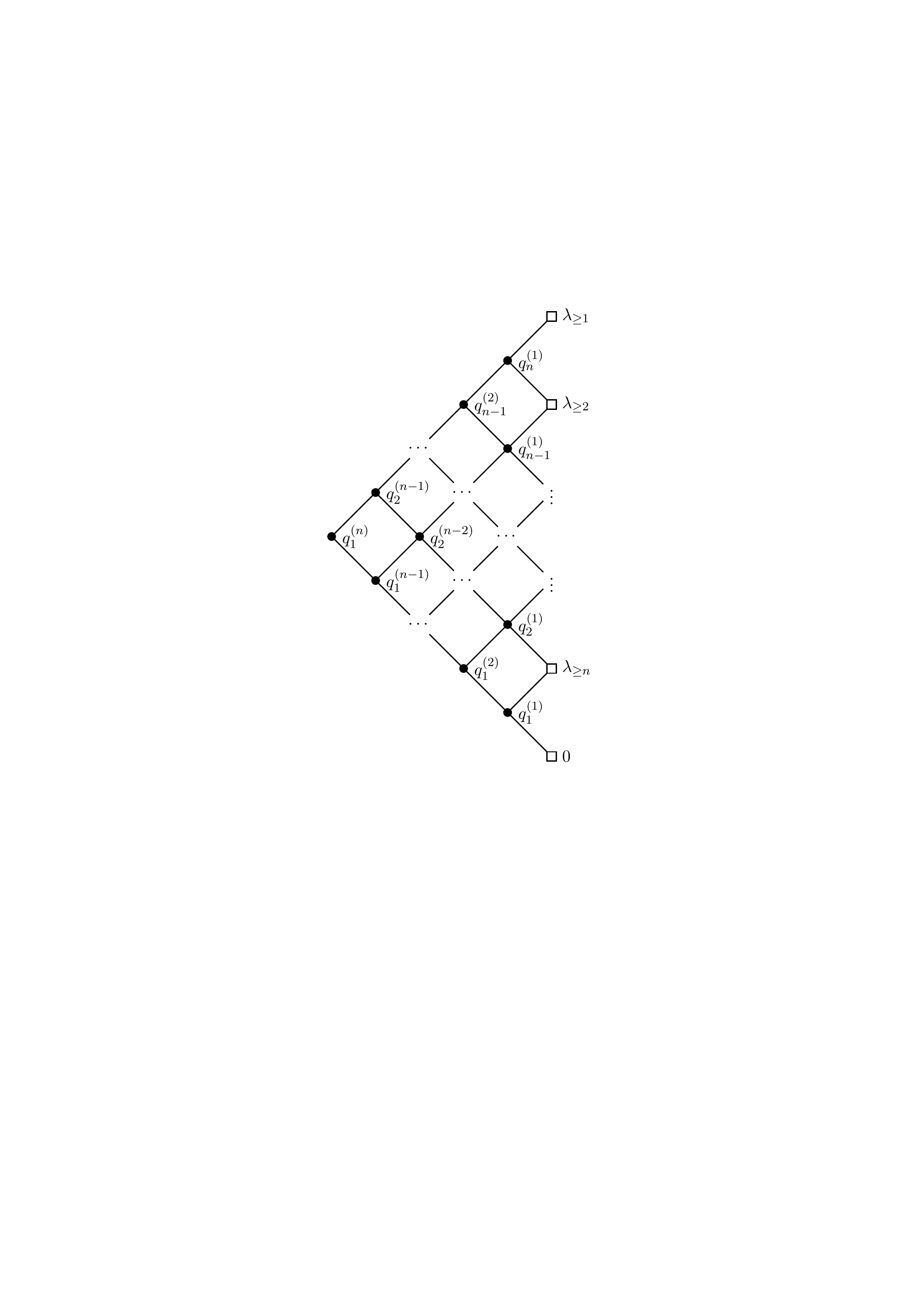}
	\caption{The marked Hasse diagram of the Gelfand--Tsetlin poset $(\Pi_A, \Pi^\ast_A, \lambda)$ of type $A_n$.}
	\label{type_A_marked_Hasse}
\end{center}
\end{figure}
By definition, the marked order polytope $\mathcal{O}(\Pi_A, \Pi^\ast_A, \lambda)$ coincides with the Gelfand--Tsetlin polytope $GT(\lambda)$ (see \cite[Section 5]{Lit} for the definition), and the marked chain polytope $\mathcal{C}(\Pi_A, \Pi^\ast_A, \lambda)$ coincides with the FFLV polytope $FFLV(\lambda)$ (see \cite[equation (0.1)]{FeFL1} for the definition). 
Let $\Delta_{\mathcal{C}, \mathcal{O}}^\z(\Pi_A, \Pi^\ast_A, \lambda)$ denote the set of lattice points in $\Delta_{\mathcal{C}, \mathcal{O}}(\Pi_A, \Pi^\ast_A, \lambda)$. 
Then we see by \cref{t:marked_chain-order_Ehrhart} that 
\begin{equation}\label{eq:marked_chain_order_number_of_lattice_points}
\begin{aligned}
|\Delta_{\mathcal{C}, \mathcal{O}}^\z(\Pi_A, \Pi^\ast_A, \lambda)| &= |GT(\lambda) \cap \z^{\Pi_A \setminus \Pi_A^\ast}|\\
&= \dim_\c (V(\lambda)).
\end{aligned}
\end{equation}
For subsets $X, Y \subseteq \r^{\Pi \setminus \Pi^\ast}$, define $X+Y \subseteq \r^{\Pi \setminus \Pi^\ast}$ to be the Minkowski sum:
\[X+Y \coloneqq \{x+y \mid x \in X,\ y \in Y\}.\]
The marked chain-order polytope $\Delta_{\mathcal{C}, \mathcal{O}}(\Pi_A, \Pi^\ast_A, \lambda)$ decomposes as the Minkowski sum of those for fundamental weights as follows. 

\begin{thm}[{see \cite[Theorem 18]{FFP}}]\label{t:decomposition_property_marked_poset}
For each partition $\Pi_A \setminus \Pi^\ast_A = \mathcal{C} \sqcup \mathcal{O}$ and $\lambda = \lambda_1 \varpi_1 + \cdots + \lambda_n \varpi_n \in P_+$, the following equalities hold:
\begin{align*}
&\Delta_{\mathcal{C}, \mathcal{O}}(\Pi_A, \Pi^\ast_A, \lambda) = \lambda_1 \Delta_{\mathcal{C}, \mathcal{O}}(\Pi_A, \Pi^\ast_A, \varpi_1) + \cdots + \lambda_n \Delta_{\mathcal{C}, \mathcal{O}}(\Pi_A, \Pi^\ast_A, \varpi_n),\\
&\Delta_{\mathcal{C}, \mathcal{O}}^\z(\Pi_A, \Pi^\ast_A, \lambda) = \lambda_1 \Delta_{\mathcal{C}, \mathcal{O}}^\z(\Pi_A, \Pi^\ast_A, \varpi_1) + \cdots + \lambda_n \Delta_{\mathcal{C}, \mathcal{O}}^\z(\Pi_A, \Pi^\ast_A, \varpi_n).
\end{align*}
\end{thm}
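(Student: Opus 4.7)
The plan is to prove the two equalities together, deducing the lattice-point version from a sufficiently explicit form of the polytope argument. The easy inclusion
\[\lambda_1 \Delta_{\mathcal{C}, \mathcal{O}}(\Pi_A, \Pi^\ast_A, \varpi_1) + \cdots + \lambda_n \Delta_{\mathcal{C}, \mathcal{O}}(\Pi_A, \Pi^\ast_A, \varpi_n) \subseteq \Delta_{\mathcal{C}, \mathcal{O}}(\Pi_A, \Pi^\ast_A, \lambda)\]
follows from the linearity of the defining data: every chain inequality $\sum_{i} x_{p_i} \le y_b - y_a$ is linear in both the coordinates $(x_p)$ and the marking $(\lambda_a)$, while the positivity constraint $x_p \ge 0$ does not involve $\lambda$ at all. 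Hence a scaled sum $\sum_i \lambda_i x^{(i)}$ with $x^{(i)} \in \Delta_{\mathcal{C}, \mathcal{O}}(\Pi_A, \Pi^\ast_A, \varpi_i)$ satisfies every defining inequality at the marking $\lambda = \sum_i \lambda_i \varpi_i$.

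For the reverse inclusion I would induct on $|\mathcal{C}|$. The base case $\mathcal{C} = \emptyset$ is the marked order polytope $\mathcal{O}(\Pi_A, \Pi^\ast_A, \lambda) = GT(\lambda)$, whose Minkowski decomposition into the $\mathcal{O}(\Pi_A, \Pi^\ast_A, \varpi_i)$'s is classical and can be recovered by the same linearity principle, combined with an explicit distribution of values along covers. For the inductive step, pick an element $p \in \mathcal{C}$ that is minimal within $\mathcal{C}$, set $\mathcal{C}' \coloneqq \mathcal{C} \setminus \{p\}$ and $\mathcal{O}' \coloneqq \mathcal{O} \cup \{p\}$, and compare $\Delta_{\mathcal{C}, \mathcal{O}}$ with $\Delta_{\mathcal{C}', \mathcal{O}'}$ through the one-coordinate piecewise-affine map that replaces $x_p$ by $x_p - \max\{y_{p'} \mid p' \lessdot p\}$, where $y_{p'} = \lambda_{p'}$ if $p' \in \Pi^\ast_A$ and $y_{p'} = x_{p'}$ otherwise. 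On each region of linearity of this one-step map, the decomposition obtained from the inductive hypothesis transports directly to a decomposition in $\Delta_{\mathcal{C}, \mathcal{O}}$, once one checks that these regions are compatible with Minkowski addition.

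For the lattice-point equality, the plan is to exhibit integral decompositions. Via \cref{t:marked_chain-order_Ehrhart}, each $x \in \Delta_{\mathcal{C}, \mathcal{O}}^\z(\Pi_A, \Pi^\ast_A, \lambda)$ pulls back through $\phi_{\mathcal{C}, \mathcal{O}}$ to a lattice Gelfand--Tsetlin pattern $\tilde{x} \in GT(\lambda) \cap \z^{\Pi_A \setminus \Pi^\ast_A}$, which admits an integral decomposition into fundamental patterns via the column reading of the associated semistandard Young tableau, equivalently via the tensor product decomposition of crystal bases. One then pushes each summand forward through $\phi_{\mathcal{C}, \mathcal{O}}$ to obtain lattice points of $\Delta_{\mathcal{C}, \mathcal{O}}^\z(\Pi_A, \Pi^\ast_A, \varpi_i)$. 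The main obstacle is that $\phi_{\mathcal{C}, \mathcal{O}}$ is only piecewise-affine, so the pushed-forward summands need not add up to $x$; the delicate step is to choose the tableau decomposition so that each summand lies in the same region of linearity of $\phi_{\mathcal{C}, \mathcal{O}}$ as $\tilde{x}$, and this compatibility with the piecewise structure is what I expect to be the combinatorial heart of the proof.
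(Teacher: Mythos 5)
The paper does not prove this statement at all: it is quoted from \cite[Theorem 18]{FFP}, so the only question is whether your sketch stands on its own, and it does not. The inclusion $\lambda_1 \Delta_{\mathcal{C}, \mathcal{O}}(\Pi_A, \Pi^\ast_A, \varpi_1) + \cdots + \lambda_n \Delta_{\mathcal{C}, \mathcal{O}}(\Pi_A, \Pi^\ast_A, \varpi_n) \subseteq \Delta_{\mathcal{C}, \mathcal{O}}(\Pi_A, \Pi^\ast_A, \lambda)$ by linearity of the defining inequalities in $(x,\lambda)$ is fine, but it is the trivial half (the analogue of \eqref{eq:super_additivity_NO} on the geometric side). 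All of the content lies in the reverse inclusion and in the lattice-point refinement, and in both places your argument stops exactly where the difficulty begins. In the inductive step you transport a decomposition through the one-coordinate map $x_p \mapsto x_p - \max\{y_{p'} \mid p' \lessdot p\}$; since $\sum_k \max_{p'} y_{p'}(x^{(k)}) \geq \max_{p'} \sum_k y_{p'}(x^{(k)})$, the images of the summands add up to the image of their sum only when all summands attain the maximum at a \emph{common} cover, i.e.\ lie in a common region of linearity. You state this compatibility as something ``one checks'' in the continuous case and as the ``combinatorial heart'' you ``expect'' in the lattice case, but you never verify it, and a decomposition handed to you by the inductive hypothesis will in general violate it; re-choosing the decomposition so that it becomes compatible is precisely the substance of the theorem, not a routine check. (The same issue is why your base case cannot be dismissed as ``the same linearity principle'': linearity only gives the easy inclusion for $GT(\lambda)$ as well.)

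That no purely formal transport argument can work is shown by the fact that the Minkowski decomposition property fails for general marked posets — this is the subject of \cite{Fou} and of the hypotheses in \cite{FFP} — so any correct proof must exploit the specific ladder structure of the type-$A$ Gelfand--Tsetlin poset, which your inductive step never does beyond invoking the classical decomposition of Gelfand--Tsetlin patterns in the base case. In addition, the assertion that your one-step map carries $\Delta_{\mathcal{C}', \mathcal{O}'}(\Pi_A, \Pi^\ast_A, \lambda)$ bijectively onto $\Delta_{\mathcal{C}, \mathcal{O}}(\Pi_A, \Pi^\ast_A, \lambda)$ is itself a nontrivial fact that needs proof or a citation to \cite{FFLP} (where transfer maps are analyzed one element at a time), and \cref{t:marked_chain-order_Ehrhart} only gives you a bijection of lattice points, not any compatibility with Minkowski sums. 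As written, the proposal establishes the easy inclusion only; for the equality and for the statement about $\Delta^{\z}_{\mathcal{C},\mathcal{O}}$ you should either follow \cite[Theorem 18]{FFP} or give an explicit decomposition algorithm for a (lattice) point of $\Delta_{\mathcal{C}, \mathcal{O}}(\Pi_A, \Pi^\ast_A, \lambda)$ and prove the common-maximizer property along the way.
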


\begin{ex}\label{ex:marked_chain-order}
Let $n = 2$, and $a_1^{(1)}, a_1^{(2)}, a_2^{(1)}$ denote the coordinate functions for $\Delta_{\mathcal{C}, \mathcal{O}}(\Pi_A, \Pi_A^\ast, \lambda) \subseteq \r^{\Pi_A \setminus \Pi_A^\ast} \simeq \r^3$ corresponding to $q_1^{(1)}, q_1^{(2)}, q_2^{(1)}$, respectively.
Then the marked order polytope $\mathcal{O}(\Pi_A, \Pi_A^\ast, \lambda)$ is given by the following inequalities: 
\[0 \leq a_1^{(1)} \leq a_1^{(2)} \leq a_2^{(1)} \leq \lambda_{\geq 1},\quad a_1^{(1)} \leq \lambda_{\geq 2} \leq a_2^{(1)},\]
and the marked chain polytope ${\mathcal C}(\Pi_A, \Pi_A^\ast, \lambda)$ is the set of $(a_1^{(1)}, a_1^{(2)}, a_2^{(1)}) \in \r_{\geq 0}^3$ satisfying the following inequalities: 
\[a_1^{(1)} \leq \lambda_{\geq 2},\quad a_2^{(1)} \leq \lambda_{\geq 1} - \lambda_{\geq 2},\quad a_1^{(1)} + a_1^{(2)} + a_2^{(1)} \leq \lambda_{\geq 1}.\]
In addition, the transfer map $\phi \colon \mathcal{O}(\Pi_A, \Pi_A^\ast, \lambda) \rightarrow {\mathcal C}(\Pi_A, \Pi_A^\ast, \lambda)$ is given by
\[(a_1^{(1)}, a_1^{(2)}, a_2^{(1)}) \mapsto (a_1^{(1)}, a_1^{(2)} - a_1^{(1)}, \min\{a_2^{(1)}-a_1^{(2)}, a_2^{(1)} - \lambda_{\geq 2}\}).\]
Let us define a partition $\Pi_A \setminus \Pi_A^\ast = \mathcal{C} \sqcup \mathcal{O}$ by $\mathcal{C} = \{q_1^{(2)}\}$ and $\mathcal{O} = \{q_1^{(1)}, q_2^{(1)}\}$. 
Then the marked chain-order polytope $\Delta_{\mathcal{C}, \mathcal{O}} (\Pi_A, \Pi_A^\ast, \lambda)$ is the set of $(a_1^{(1)}, a_1^{(2)}, a_2^{(1)}) \in \r^3$ satisfying the following inequalities: 
\[0 \leq a_1^{(1)} \leq \lambda_{\geq 2},\quad \lambda_{\geq 2} \leq a_2^{(1)} \leq \lambda_{\geq 1},\quad 0 \leq a_1^{(2)} \leq a_2^{(1)} - a_1^{(1)}.\]
In addition, the transfer map $\phi_{\mathcal{C}, \mathcal{O}} \colon \mathcal{O}(\Pi_A, \Pi_A^\ast, \lambda) \rightarrow \Delta_{\mathcal{C}, \mathcal{O}}(\Pi_A, \Pi_A^\ast, \lambda)$ is given by 
\[(a_1^{(1)}, a_1^{(2)}, a_2^{(1)}) \mapsto (a_1^{(1)}, a_1^{(2)} - a_1^{(1)}, a_2^{(1)}).\]
\end{ex}

\section{Newton--Okounkov bodies}\label{s:NO body}

In Section \ref{ss:NO_body}, we recall the definitions of higher rank valuations and Newton--Okounkov bodies, following \cite{Kav, KK2}.
In Section \ref{ss:NO_body_flag}, we define our main valuations and review some previous realizations of Gelfand--Tsetlin polytopes \cite{Oko1, FO} and FFLV polytopes \cite{Kir1} as Newton--Okounkov bodies.

\subsection{Basic definitions on Newton--Okounkov bodies}\label{ss:NO_body}

Let $R$ be a $\mathbb{C}$-algebra without nonzero zero-divisors. 
We fix $N \in \z_{>0}$ and a total order $\leq$ on $\z^N$ respecting the addition. 

\begin{defi}\label{d:valuation}\normalfont
A map $\nu \colon R \setminus \{0\} \rightarrow \mathbb{Z}^N$ is called a \emph{valuation} on $R$ with values in $\z^N$ if for each $\sigma, \tau \in R \setminus \{0\}$ and $c \in \c^\times$, it holds that
\begin{enumerate}
\item[{\rm (i)}] $\nu(\sigma \cdot \tau) = \nu(\sigma) + \nu(\tau)$,
\item[{\rm (ii)}] $\nu(c \cdot \sigma) = \nu(\sigma)$, 
\item[{\rm (iii)}] $\nu(\sigma + \tau) \geq {\rm min} \{\nu(\sigma), \nu(\tau)\}$ unless $\sigma + \tau = 0$. 
\end{enumerate}
\end{defi}

For ${\bm a} \in \z^N$ and a valuation $\nu$ on $R$ with values in $\z^N$, define a $\c$-subspace $R_{\bm a} \subseteq R$ by
\[R_{\bm a} \coloneqq \{\sigma \in R \setminus \{0\} \mid \nu(\sigma) \geq {\bm a}\} \cup \{0\}.\]
Then we set $\widehat{R}_{\bm a} \coloneqq R_{\bm a}/\bigcup_{{\bm a} < {\bm b}} R_{\bm b}$, which is called the \emph{leaf} above ${\bm a} \in \z^N$. 
A valuation $\nu$ is said to \emph{have $1$-dimensional leaves} if $\dim_\c(\widehat{R}_{\bm a}) = 0\ {\rm or}\ 1$ for all ${\bm a} \in \z^N$. 

\begin{ex}\label{ex:lowest_and_highest_term_valuation}
Let $\mathbb{C}(z_1, \ldots, z_N)$ be the field of rational functions in $N$ variables, and take a total order $\leq$ on $\mathbb{Z}^N$ to be the lexicographic order, that is, $(a_1, \ldots, a_N) < (a_1 ^\prime, \ldots, a_N ^\prime)$ if and only if there exists $1 \leq k \leq N$ such that 
\begin{align*}
a_1 = a_1 ^\prime, \ldots, a_{k-1} = a_{k-1} ^\prime,\ a_k < a_k ^\prime.
\end{align*}
The lexicographic order $\leq$ on $\mathbb{Z}^N$ gives a total order (denoted by the same symbol $\leq$) on the set of Laurent monomials in $z_1, \ldots, z_N$ by
\begin{center}
$z_1 ^{a_1} \cdots z_N ^{a_N} \leq z_1 ^{a_1 ^\prime} \cdots z_N ^{a_N ^\prime}$ if and only if $(a_1, \ldots, a_N) \leq (a_1 ^\prime, \ldots, a_N ^\prime)$. 
\end{center}
We define two maps $\nu^{\rm low}_{z_1 > \cdots > z_N}, \nu^{\rm high}_{z_1 > \cdots > z_N} \colon \mathbb{C}(z_1, \ldots, z_N) \setminus \{0\} \rightarrow \z^N$ by 
\begin{itemize}
\item $\nu^{\rm low} _{z_1 > \cdots > z_N} (f) \coloneqq (a_1, \ldots, a_N)$ and $\nu^{\rm high} _{z_1 > \cdots > z_N} (f) \coloneqq -(a_1 ^\prime, \ldots, a_N ^\prime)$ for
\begin{align*}
f &= c z_1 ^{a_1} \cdots z_N ^{a_N} + ({\rm higher\ terms})\\
&= c^\prime z_1 ^{a_1 ^\prime} \cdots z_N ^{a_N ^\prime} + ({\rm lower\ terms}) \in \c[z_1, \ldots, z_N] \setminus \{0\},
\end{align*}
where $c, c^\prime \in \c^\times$, and we mean by ``(higher terms)'' (resp., ``(lower terms)'') a linear combination of monomials bigger than $z_1 ^{a_1} \cdots z_N ^{a_N}$ (resp., smaller than $z_1 ^{a_1^\prime} \cdots z_N ^{a_N^\prime}$) with respect to the total order $\leq$,
\item $\nu^{\rm low}_{z_1 > \cdots > z_N} (f/g) \coloneqq \nu^{\rm low} _{z_1 > \cdots > z_N} (f) - \nu^{\rm low} _{z_1 > \cdots > z_N} (g)$ and $\nu^{\rm high}_{z_1 > \cdots > z_N} (f/g) \coloneqq \nu^{\rm high} _{z_1 > \cdots > z_N} (f) - \nu^{\rm high} _{z_1 > \cdots > z_N} (g)$ for all nonzero polynomials $f, g \in \c[z_1, \ldots, z_N] \setminus \{0\}$.
\end{itemize}
Then the map $\nu^{\rm low} _{z_1 > \cdots > z_N}$ (resp., $\nu^{\rm high} _{z_1 > \cdots > z_N}$) is a valuation with respect to the lexicographic order $\leq$ whose leaves are all $1$-dimensional, which is called the \emph{lowest term valuation} (resp., the \emph{highest term valuation}) with respect to the \emph{lexicographic order} $z_1 > \cdots > z_N$.
\end{ex}

The following is a fundamental property of valuations with $1$-dimensional leaves.

\begin{prop}[{see, for instance, \cite[Proposition 1.9]{Kav}}]\label{p:property_valuation}
Let $\nu \colon R \setminus \{0\} \rightarrow \mathbb{Z}^N$ be a valuation with $1$-dimensional leaves, and $V \subseteq R$ a finite-dimensional $\c$-linear subspace. 
\begin{enumerate}
\item[{\rm (1)}] There exists a $\c$-basis ${\mathbf B}$ of $V$ such that the values $\nu (b)$, $b \in {\mathbf B}$, are all distinct. 
\item[{\rm (2)}] The equality $\nu (V \setminus \{0\}) = \{\nu(b) \mid b \in {\mathbf B}\}$ holds; in particular, the cardinality of $\nu (V \setminus \{0\})$ coincides with $\dim_\c(V)$.
\end{enumerate}
\end{prop}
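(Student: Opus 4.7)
\medskip

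The plan is to prove (1) and (2) together by first establishing that $|\nu(V \setminus \{0\})| \leq \dim_\c V$ using only the valuation axioms, then upgrading this to an equality (and to the existence of a basis) via the $1$-dimensional leaves hypothesis.

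\medskip

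First I would prove a preliminary linear independence statement: if $v_1, \ldots, v_k \in V \setminus \{0\}$ have pairwise distinct values $\nu(v_1), \ldots, \nu(v_k)$, then they are $\c$-linearly independent. Indeed, suppose $\sum_{i=1}^{k} c_i v_i = 0$ with not all $c_i = 0$, and let $i_0$ be an index with $c_{i_0} \neq 0$ and $\nu(v_{i_0})$ minimal among $\{\nu(v_i) \mid c_i \neq 0\}$. Rewriting $c_{i_0} v_{i_0} = -\sum_{i \neq i_0} c_i v_i$ and applying properties (i)--(iii) of \cref{d:valuation} to the right-hand side yields $\nu(c_{i_0} v_{i_0}) \geq \min_{i \neq i_0, c_i \neq 0} \nu(v_i) > \nu(v_{i_0})$, contradicting $\nu(c_{i_0} v_{i_0}) = \nu(v_{i_0})$. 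Picking one representative from each fiber $\nu^{-1}({\bm a}) \cap V$ with ${\bm a} \in \nu(V \setminus \{0\})$ therefore produces linearly independent vectors, which forces $|\nu(V \setminus \{0\})| \leq \dim_\c V < \infty$.

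\medskip

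For each ${\bm a} \in \nu(V \setminus \{0\})$ fix $b_{\bm a} \in V$ with $\nu(b_{\bm a}) = {\bm a}$, and let ${\mathbf B} \coloneqq \{b_{\bm a} \mid {\bm a} \in \nu(V \setminus \{0\})\}$. By the preceding paragraph ${\mathbf B}$ consists of linearly independent vectors with distinct values. It remains to show that ${\mathbf B}$ spans $V$; this is where the $1$-dimensional leaves assumption enters. Given any $v \in V \setminus \{0\}$ with ${\bm a} \coloneqq \nu(v)$, both $v$ and $b_{\bm a}$ represent nonzero classes in the leaf $\widehat{R}_{\bm a}$, which by hypothesis is $1$-dimensional; hence there is $c \in \c^\times$ with $v - c b_{\bm a} \in \bigcup_{{\bm b} > {\bm a}} R_{\bm b}$, so either $v = c b_{\bm a}$ or $\nu(v - c b_{\bm a}) > \nu(v)$. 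Iterating this reduction produces a sequence of remainders $v, v - c b_{\bm a}, \ldots$ in $V$ whose $\nu$-values are strictly increasing inside the finite set $\nu(V \setminus \{0\}) \cup \{\infty\}$; the process must therefore terminate at $0$, which expresses $v$ as a $\c$-linear combination of elements of ${\mathbf B}$. Consequently ${\mathbf B}$ is a basis of $V$, proving (1), and the equality $|\nu(V \setminus \{0\})| = |{\mathbf B}| = \dim_\c V$ together with $\nu({\mathbf B}) \subseteq \nu(V \setminus \{0\})$ yields (2).

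\medskip

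I expect no serious obstacle: the only delicate point is the termination of the successive approximation in the last step, and finiteness of $\nu(V \setminus \{0\})$ — already established in the first paragraph — handles it cleanly. Thus the proof is essentially a straightforward filtered-versus-graded argument, where the $1$-dimensional leaves hypothesis is exactly what is needed to lift elements from the associated graded back to the filtered space.
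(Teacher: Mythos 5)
Your proof is correct: the paper itself gives no proof of \cref{p:property_valuation}, deferring to \cite[Proposition 1.9]{Kav}, and your argument is precisely the standard one used there — pairwise distinct values force linear independence via the ultrametric inequality, and the $1$-dimensional leaves hypothesis powers the successive-approximation (strictly increasing values in the finite set $\nu(V\setminus\{0\})$) argument that makes the chosen representatives span $V$. The only negligible loose end is the degenerate case in the independence step where all other coefficients vanish, which is immediate anyway.
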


Let $X$ be an irreducible normal projective variety over $\c$, $\mathcal{L}$ a line bundle on $X$ generated by global sections, and $N \coloneqq \dim_\c (X)$. 

\begin{defi}[{see \cite[Definition 1.10]{KK2}}]\label{d:Newton--Okounkov body}
Take a valuation $\nu \colon \mathbb{C}(X) \setminus \{0\} \rightarrow \z^N$ with $1$-dimensional leaves, and fix a nonzero section $\tau \in H^0 (X, \mathcal{L})$. 
We define a semigroup $S(X, \mathcal{L}, \nu, \tau) \subseteq \z_{>0} \times \z^N$ by 
\[S(X, \mathcal{L}, \nu, \tau) \coloneqq \bigcup_{k \in \z_{>0}} \{(k, \nu(\sigma / \tau^k)) \mid \sigma \in H^0(X, \mathcal{L}^{\otimes k}) \setminus \{0\}\},\] 
and let $C(X, \mathcal{L}, \nu, \tau) \subseteq \r_{\geq 0} \times \r^N$ denote the smallest real closed cone containing $S(X, \mathcal{L}, \nu, \tau)$. 
Define a convex set $\Delta(X, \mathcal{L}, \nu, \tau) \subseteq \r^N$ by 
\[\Delta(X, \mathcal{L}, \nu, \tau) \coloneqq \{{\bm a} \in \r^N \mid (1, {\bm a}) \in C(X, \mathcal{L}, \nu, \tau)\},\] 
which is called the \emph{Newton--Okounkov body} of $(X, \mathcal{L})$ associated with $\nu$ and $\tau$.
In the notation of \cite[Definition 1.10]{KK2}, the Newton--Okounkov body $\Delta(X, \mathcal{L}, \nu, \tau)$ is $\Delta(S, M)$ for $S \coloneqq S(X, \mathcal{L}, \nu, \tau)$ and $M \coloneqq L(S) \cap (\r_{\geq 0} \times \r^N)$, where $L(S) \subseteq \r \times \r^N$ denotes the smallest $\r$-linear subspace containing $S$ (see also \cite[Section 3.2]{KK2} and \cite[Section 1.2]{Kav}).
\end{defi}

\begin{rem}
For another nonzero section $\tau^\prime \in H^0 (X, \mathcal{L})$, we have
\[
S(X, \mathcal{L}, \nu, \tau^\prime) \cap (\{k\} \times \mathbb{Z}^N) = (S(X, \mathcal{L}, \nu, \tau) \cap (\{k\} \times \mathbb{Z}^N)) + (0, k \nu(\tau/\tau^\prime))
\] 
for all $k \in \mathbb{Z}_{>0}$, which implies that
\[\Delta(X, \mathcal{L}, \nu, \tau^\prime) = \Delta(X, \mathcal{L}, \nu, \tau) + \nu(\tau/\tau^\prime).\] 
Hence $\Delta(X, \mathcal{L}, \nu, \tau)$ does not depend on the choice of $\tau$ up to translations. 
This is the reason why we denote it simply by $\Delta(X, \mathcal{L}, \nu)$ in Introduction.
\end{rem} 

Since $S(X, \mathcal{L}, \nu, \tau)$ is a semigroup, the definition of Newton--Okounkov bodies implies that 
\begin{equation}\label{eq:NO_scalar_multiple}
\begin{aligned}
\Delta(X, \mathcal{L}^{\otimes k}, \nu, \tau^k) = k \Delta(X, \mathcal{L}, \nu, \tau)
\end{aligned}
\end{equation}
for all $k \in \z_{> 0}$. 
In addition, it follows by \cite[Theorem 2.30]{KK2} that the Newton--Okounkov body $\Delta(X, \mathcal{L}, \nu, \tau)$ is a convex body, i.e., a compact convex set. 
Since $\mathcal{L}$ is generated by global sections, we obtain a morphism $X \rightarrow \mathbb{P} (H^0(X, \mathcal{L})^\ast)$; the closure of its image is denoted by $Y_{\mathcal{L}}$. 

\begin{thm}[{\cite[Corollary 3.2]{KK2}}]\label{t:NO_dimension_volume}
The real dimension $d \coloneqq \dim_\r (\Delta(X, \mathcal{L}, \nu, \tau))$ equals the complex dimension of $Y_{\mathcal{L}}$. 
In addition, the degree ${\rm deg}(Y_{\mathcal{L}})$ of the closed embedding $Y_{\mathcal{L}} \hookrightarrow \mathbb{P} (H^0(X, \mathcal{L})^\ast)$ coincides with $\frac{1}{d!} \cdot {\rm Vol}_d (\Delta(X, \mathcal{L}, \nu, \tau))$, where ${\rm Vol}_d (\Delta(X, \mathcal{L}, \nu, \tau))$ denotes the $d$-dimensional volume of $\Delta(X, \mathcal{L}, \nu, \tau)$ with respect to the lattice generated by 
\[\bigcup_{k \in \z_{>0}} \{\nu(\sigma / \tau^k) - \nu(\sigma^\prime / \tau^k) \mid \sigma, \sigma^\prime \in H^0(X, \mathcal{L}^{\otimes k}) \setminus \{0\}\} \subseteq \z^N.\]
\end{thm}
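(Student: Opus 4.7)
The plan is to extract both assertions from the asymptotic behavior of $\dim_\c H^0(X, \mathcal{L}^{\otimes k})$ as $k \to \infty$, computed in two different ways. The first step is to reinterpret this dimension via the valuation: since $\tau$ is nonzero and $X$ is irreducible, the map $\sigma \mapsto \sigma/\tau^k$ realizes $H^0(X, \mathcal{L}^{\otimes k})$ as a finite-dimensional $\c$-subspace of $\c(X)$, and applying \cref{p:property_valuation}(2) to this subspace immediately gives
\begin{equation*}
|S(X, \mathcal{L}, \nu, \tau) \cap (\{k\} \times \z^N)| = \dim_\c H^0(X, \mathcal{L}^{\otimes k})
\end{equation*}
for every $k \geq 1$. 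Hence the slice-count of the graded semigroup $S \coloneqq S(X, \mathcal{L}, \nu, \tau)$ coincides with the Hilbert function of the section ring $\bigoplus_{k \geq 0} H^0(X, \mathcal{L}^{\otimes k})$.

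The next ingredient is a Khovanskii-type theorem for graded subsemigroups of $\z_{>0} \times \z^N$ (the technical main result behind \cite[Corollary 3.2]{KK2}), which asserts that the slice cardinalities satisfy
\begin{equation*}
|S \cap (\{k\} \times \z^N)| = \frac{1}{d!}\mathrm{Vol}_d(\Delta(X, \mathcal{L}, \nu, \tau)) \cdot k^d + O(k^{d-1})
\end{equation*}
as $k \to \infty$, where $d = \dim_\r \Delta(X, \mathcal{L}, \nu, \tau)$ and $\mathrm{Vol}_d$ is the lattice-normalized volume specified in the theorem. On the other hand, since $\mathcal{L}$ is globally generated, the induced morphism $X \to Y_\mathcal{L} \subseteq \mathbb{P}(H^0(X, \mathcal{L})^\ast)$ together with the standard asymptotic Riemann--Roch for projective varieties yields
\begin{equation*}
\dim_\c H^0(X, \mathcal{L}^{\otimes k}) = \deg(Y_\mathcal{L}) \cdot \frac{k^{\dim_\c Y_\mathcal{L}}}{(\dim_\c Y_\mathcal{L})!} + O(k^{\dim_\c Y_\mathcal{L} - 1})
\end{equation*}
for large $k$ (using the Stein factorization to reduce, if necessary, to the case where the map to $Y_\mathcal{L}$ is birational). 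Matching polynomial degrees yields $d = \dim_\c Y_\mathcal{L}$, and matching leading coefficients yields the claimed identity $\deg(Y_\mathcal{L}) = \frac{1}{d!}\mathrm{Vol}_d(\Delta(X, \mathcal{L}, \nu, \tau))$.

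The main obstacle is the Khovanskii-type asymptotic for $S$: the semigroup $S$ is in general not finitely generated, so classical Ehrhart theory cannot be applied directly. One must approximate $S$ from inside by saturated subsemigroups of its closed cone $C(X, \mathcal{L}, \nu, \tau) \subseteq \r_{\geq 0} \times \r^N$, using that $\Delta$ has nonempty interior in its affine span, and then control the lattice-point approximation error as $k \to \infty$. This is carried out by a careful analysis of the cone and its hyperplane slices $\{k\} \times \r^N$ in \cite[Section 3]{KK2}, and it is at this step that the precise lattice normalization of $\mathrm{Vol}_d$ enters.
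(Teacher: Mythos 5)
The paper does not actually prove this statement: \cref{t:NO_dimension_volume} is quoted verbatim from \cite[Corollary 3.2]{KK2}, so the only comparison available is with Kaveh--Khovanskii's own argument, and your outline does follow that route. The first step is fine and is exactly how the value semigroup enters in \cite{KK2}: since $\nu$ has $1$-dimensional leaves, \cref{p:property_valuation}~(2) applied to $\{\sigma/\tau^k \mid \sigma \in H^0(X, \mathcal{L}^{\otimes k})\} \subseteq \c(X)$ gives $|S(X,\mathcal{L},\nu,\tau) \cap (\{k\}\times \z^N)| = \dim_\c H^0(X,\mathcal{L}^{\otimes k})$, and the Okounkov--Khovanskii asymptotics for graded semigroups of almost integral type (which indeed require the cone/approximation analysis you allude to, and in general give only an $o(k^d)$ error rather than $O(k^{d-1})$) then reduce everything to Hilbert-function asymptotics. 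Your derivation of the dimension statement $d = \dim_\c Y_{\mathcal{L}}$ along these lines is sound.

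There are, however, two genuine gaps in the degree statement. First, your constants do not cohere: equating the leading terms of your two displays gives $\frac{1}{d!}\mathrm{Vol}_d(\Delta) = \frac{\deg(Y_{\mathcal{L}})}{d!}$, i.e.\ $\deg(Y_{\mathcal{L}}) = \mathrm{Vol}_d(\Delta)$, not the asserted $\deg(Y_{\mathcal{L}}) = \frac{1}{d!}\mathrm{Vol}_d(\Delta)$; moreover the semigroup theorem is normally stated as $\lim_k |S_k|/k^d = \mathrm{vol}_d(\Delta)$ with Lebesgue measure normalized by the lattice, so you must fix one convention for ``volume with respect to the lattice'' and carry it through -- as written the claimed identity is not what your computation yields. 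Second, and more seriously, the asymptotic $\dim_\c H^0(X,\mathcal{L}^{\otimes k}) = \deg(Y_{\mathcal{L}})\, k^{\dim Y_{\mathcal{L}}}/(\dim Y_{\mathcal{L}})! + O(k^{\dim Y_{\mathcal{L}}-1})$ is false for a general globally generated $\mathcal{L}$: the degree of $Y_{\mathcal{L}}$ controls the Hilbert function of the homogeneous coordinate ring of $Y_{\mathcal{L}}$, i.e.\ of the images of $\mathrm{Sym}^k H^0(X,\mathcal{L})$ in $H^0(X,\mathcal{L}^{\otimes k})$, while the full section spaces can be larger by the degree $e$ of the finite part of the Stein factorization of $X \to Y_{\mathcal{L}}$. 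For instance, if $\mathcal{L}$ is the degree-two pencil on a hyperelliptic curve, then $\dim_\c H^0(X,\mathcal{L}^{\otimes k})$ grows like $2k$ while $Y_{\mathcal{L}} = \mathbb{P}^1$ has degree $1$. Your parenthetical appeal to Stein factorization does not remove this factor $e$; it is precisely the mapping-degree factor that Kaveh--Khovanskii keep track of in their formulation. To close the argument you must either show that this degree equals one in the situation at hand (for $G/B \to \mathbb{P}(V(\lambda))$ the fibers are connected, so it is), or compare the semigroup built from the full section ring with the subalgebra generated by $H^0(X,\mathcal{L})$ and control the discrepancy -- which is where the substantive work in \cite[Section 3]{KK2} lies.
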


By the additivity of $\nu$, we see that 
\begin{equation}\label{eq:super_additivity_NO}
\begin{aligned}
\Delta(X, \mathcal{L}_1, \nu, \tau_1) + \Delta(X, \mathcal{L}_2, \nu, \tau_2) \subseteq \Delta(X, \mathcal{L}_1 \otimes \mathcal{L}_2, \nu, \tau_1 \cdot \tau_2)
\end{aligned}
\end{equation}
for all globally generated line bundles $\mathcal{L}_1, \mathcal{L}_2$ on $X$ and nonzero sections $\tau_1 \in H^0 (X, \mathcal{L}_1)$, $\tau_2 \in H^0 (X, \mathcal{L}_2)$. 
Let 
\[X_\bullet \colon X = X_0 \supsetneq X_1 \supsetneq \cdots \supsetneq X_N\] 
be a sequence of irreducible normal closed subvarieties of $X$ such that $\dim_\c (X_k) = N-k$ for $0 \leq k \leq N$. 
Denote by $\eta_k$ the generic point of $X_k$ for $1 \leq k \leq N$. 
Since $X_k$ is normal for all $0 \leq k \leq N-1$, the stalk $\mathcal{O}_{\eta_{k+1}, X_k}$ of the structure sheaf $\mathcal{O}_{X_k}$ at $\eta_{k+1}$ is a discrete valuation ring with quotient field $\c(X_k)$. 
Let ${\rm ord}_{X_{k+1}} \colon \c(X_k) \setminus \{0\} \rightarrow \z$ denote the corresponding valuation, and take a generator $u_{k+1} \in \c(X_k)$ of the unique maximal ideal of $\mathcal{O}_{\eta_{k+1}, X_k}$.
We define a map 
\[\nu_{X_\bullet} \colon \c(X) \setminus \{0\} \rightarrow \z^N,\quad f \mapsto (a_1, \ldots, a_N),\] 
as follows (see \cite[Section 2.1]{Oko1}). 
The first coordinate $a_1$ is given by $a_1 \coloneqq {\rm ord}_{X_1}(f)$, which implies that $(u_1 ^{-a_1} f)|_{X_1} \in \c(X_1) \setminus \{0\}$. 
Then the second coordinate $a_2$ is given by $a_2 \coloneqq {\rm ord}_{X_2} ((u_1 ^{-a_1} f)|_{X_1})$. 
Continuing in this way, we define all $a_k$. 
This is the definition of $\nu_{X_\bullet}$. 
Then the map $\nu_{X_\bullet}$ is a valuation with respect to the lexicographic order, which has $1$-dimensional leaves.

\begin{rem}
The valuation $\nu_{X_\bullet}$ depends on the choice of $u_1, \ldots, u_N$, but the corresponding Newton--Okounkov body $\Delta(X, \mathcal{L}, \nu_{X_\bullet}, \tau)$ is independent up to unimodular equivalence. 
\end{rem}

\subsection{Newton--Okounkov bodies of flag varieties}\label{ss:NO_body_flag}

In this subsection, we restrict ourselves to Newton--Okounkov bodies of flag varieties, and define our main valuations. 
Set $N \coloneqq \frac{n (n + 1)}{2}$, and define a reduced word ${\bm i}_A = (i_1, \ldots, i_N) \in I^N$ for the longest element $w_0 \in W$ by
\begin{equation}\label{eq:reduced_word}
\begin{aligned}
{\bm i}_A \coloneqq (1, 2, 1, 3, 2, 1, \ldots, n, n-1, \ldots, 1).
\end{aligned}
\end{equation}
We write $w_{\geq N+1} \coloneqq e$, and set $w_{\geq k} \coloneqq s_{i_k} s_{i_{k+1}} \cdots s_{i_N}$ for $1 \leq k \leq N$.
Arrange the elements of $\Pi_A \setminus \Pi^\ast_A$ as 
\begin{equation}\label{eq:arrangement_of_poset}
\begin{aligned}
(q_1, q_2, \ldots, q_N) \coloneqq (q_1^{(1)}, q_1^{(2)}, q_2^{(1)}, q_1^{(3)}, q_2^{(2)}, q_3^{(1)}, \ldots, q_1^{(n)}, q_2^{(n-1)}, \ldots, q_n^{(1)}).
\end{aligned}
\end{equation}
Then a partition $\Pi_A \setminus \Pi^\ast_A = \mathcal{C} \sqcup \mathcal{O}$ is identified with a partition of the set $\{1, 2, \ldots, N\}$; we regard this set $\{1, 2, \ldots, N\}$ as the set of positions of entries in ${\bm i}_A$. 
Using the arrangement \eqref{eq:arrangement_of_poset}, we identify $\r^{\Pi_A \setminus \Pi^\ast_A}$ with $\r^N$.
Take a partition $\Pi_A \setminus \Pi^\ast_A = \mathcal{C} \sqcup \mathcal{O}$.
For $1 \leq k \leq N$, we define $u_k \in W$ by 
\[u_k \coloneqq  
\begin{cases}
s_{i_k} &\text{if}\ q_k \in \mathcal{C},\\
e &\text{if}\ q_k \in \mathcal{O},
\end{cases}\]
and set $u_{\leq k} \coloneqq u_1 u_2 \cdots u_k \in W$.
Then let $X_\bullet^{\mathcal{C}, \mathcal{O}}$ denote the following sequence of irreducible normal closed subvarieties of $G/B$:
\begin{equation}\label{eq:definition_of_partial_translation}
\begin{aligned}
(G/B =)\ X(w_{\geq 1}) \supsetneq \overline{u_{\leq 1}} X(w_{\geq 2}) \supsetneq \overline{u_{\leq 2}} X(w_{\geq 3}) \supsetneq \cdots \supsetneq \overline{u_{\leq N}} X(w_{\geq N+1})\ (= \overline{u_{\leq N}} B/B).
\end{aligned}
\end{equation}
If $\mathcal{C} = \emptyset$ and $\mathcal{O} = \Pi_A \setminus \Pi^\ast_A$, then $X_\bullet^{\emptyset, \Pi_A \setminus \Pi^\ast_A}$ is the following sequence of Schubert varieties: 
\[(G/B =)\ X(w_{\geq 1}) \supsetneq X(w_{\geq 2}) \supsetneq X(w_{\geq 3}) \supsetneq \cdots \supsetneq X(w_{\geq N+1})\ (= B/B).\]
If $\mathcal{C} = \Pi_A \setminus \Pi^\ast_A$ and $\mathcal{O} = \emptyset$, then $X_\bullet^{\Pi_A \setminus \Pi^\ast_A, \emptyset}$ is the following sequence of translated Schubert varieties: 
\begin{equation}\label{eq:definition_of_full_translation}
\begin{aligned}
(G/B =)\ X(w_{\geq 1}) \supsetneq \overline{s}_{i_1} X(w_{\geq 2}) \supsetneq \overline{s}_{i_1} \overline{s}_{i_2} X(w_{\geq 3}) \supsetneq \cdots \supsetneq \overline{w_0} X(w_{\geq N+1})\ (= \overline{w_0} B/B),
\end{aligned}
\end{equation}
which was studied by Kiritchenko \cite{Kir1}. 
For $1 \leq k \leq N$ and $t \in \c$, we define $\Omega_k (t) \in G$ by $\Omega_k (t) \coloneqq \overline{u_k} \exp(t f_{i_k})$, and set 
\[\Omega_{\mathcal{C}, \mathcal{O}} (t_1, \ldots, t_N) \coloneqq \Omega_1 (t_1) \cdot \Omega_2 (t_2) \cdots \Omega_N (t_N) \in G\]
for $(t_1, \ldots, t_N) \in \c^N$.
Then it is easy to see by \cite[Chapter I\hspace{-.1em}I.13]{Jan} that for $1 \leq k \leq N$, we obtain a birational morphism
\[\c^{N-k+1} \rightarrow \overline{u_{\leq k-1}} X(w_{\geq k}),\quad (t_k, t_{k+1}, \ldots, t_N) \mapsto \Omega_k (t_k) \cdot \Omega_{k+1} (t_{k+1}) \cdots \Omega_N (t_N) \bmod B,\]
and the rational function $t_k \in \c(t_k, \ldots, t_N) \simeq \c(\overline{u_{\leq k-1}} X(w_{\geq k}))$ gives a generator of the unique maximal ideal of the stalk $\mathcal{O}_{\eta_k, \overline{u_{\leq k-1}} X(w_{\geq k})}$ of the structure sheaf $\mathcal{O}_{\overline{u_{\leq k-1}} X(w_{\geq k})}$ at $\eta_k$, where $\eta_k$ denotes the generic point of $\overline{u_{\leq k}} X(w_{\geq k+1})$. 
Considering the case $k = 1$, we see that the following morphism is birational:
\[\widehat{\Omega}_{\mathcal{C}, \mathcal{O}} \colon \c^N \rightarrow G/B,\quad (t_1, \ldots, t_N) \mapsto \Omega_{\mathcal{C}, \mathcal{O}} (t_1, \ldots, t_N) \bmod B.\]
Using this birational morphism, we identify the function field $\c(G/B)$ with the field $\c(t_1, \ldots, t_N)$ of rational functions in $t_1, \ldots, t_N$. 

\begin{ex}
Let $G = SL_4(\c)$, and $(\mathcal{C}, \mathcal{O}) = (\{q_5, q_6\}, \{q_1, q_2, q_3, q_4\})$. 
Then the sequence $X_\bullet^{\mathcal{C}, \mathcal{O}}$ is given by 
\[(G/B =)\ X(w_{\geq 1}) \supsetneq X(w_{\geq 2}) \supsetneq \cdots \supsetneq X(w_{\geq 5})\ (= X(s_2 s_1)) \supsetneq \overline{s}_2 X(s_1) \supsetneq \overline{s}_2 \overline{s}_1 X(e)\ (= \overline{s}_2 \overline{s}_1 B/B).\]
In addition, we have 
\begin{align*}
\Omega_{\mathcal{C}, \mathcal{O}} (t_1, \ldots, t_6) &= \exp(t_1 f_1) \exp(t_2 f_2) \exp(t_3 f_1) \exp(t_4 f_3) \overline{s}_2 \exp(t_5 f_2) \overline{s}_1 \exp(t_6 f_1)\\
&= \begin{pmatrix}
-t_6 & -1 & 0 & 0  \\
-t_5 -t_1 t_6 -t_3 t_6 & -t_1 -t_3 & -1 & 0 \\
1 -t_2 t_5 -t_2 t_3 t_6 & -t_2 t_3 & -t_2 & 0 \\
t_4 & 0 & 0 & 1
\end{pmatrix},
\end{align*}
which gives an identification $\c(G/B) \simeq \c(t_1, \ldots, t_6)$.
\end{ex}

Let $\nu_{\mathcal{C}, \mathcal{O}}^{\rm low}$ and $\nu_{\mathcal{C}, \mathcal{O}}^{\rm high}$ denote the valuations on $\c(G/B) \simeq \c(t_1, \ldots, t_N)$ defined to be $\nu^{\rm low} _{t_1 > \cdots > t_N}$ and $\nu^{\rm high} _{t_1 > \cdots > t_N}$ on $\c(t_1, \ldots, t_N)$, respectively (see \cref{ex:lowest_and_highest_term_valuation}). 
The valuation $\nu_{\mathcal{C}, \mathcal{O}}^{\rm low}$ is identical to the valuation $\nu_{X_\bullet^{\mathcal{C}, \mathcal{O}}}$ associated with the sequence $X_\bullet^{\mathcal{C}, \mathcal{O}}$.
Let us consider two extreme cases $(\mathcal{C}, \mathcal{O}) = (\emptyset, \Pi_A \setminus \Pi^\ast_A)$ and $(\mathcal{C}, \mathcal{O}) = (\Pi_A \setminus \Pi^\ast_A, \emptyset)$.
Then we obtain the Gelfand--Tsetlin polytopes $GT(\lambda)$ and the FFLV polytopes $FFLV(\lambda)$ for all $\lambda \in P_+$ as the corresponding Newton--Okounkov bodies as follows. 

\begin{thm}[{see \cite[Section 3.1]{Oko1}, \cite[Proposition 3.29, Corollary 5.3]{FO}, and \cite[Example 3.12]{Fuj2}}]\label{t:NOBY_realization_of_GT}
Let $(\mathcal{C}, \mathcal{O}) = (\emptyset, \Pi_A \setminus \Pi^\ast_A)$, and $\lambda \in P_+$.
Then the Newton--Okounkov body $\Delta(G/B, \mathcal{L}_\lambda, \nu_{\emptyset, \Pi_A \setminus \Pi^\ast_A}^{\rm low}, \tau_\lambda)$ coincides with the translated Gelfand--Tsetlin polytope $GT(\lambda) - {\bm a}^{\rm high}_\lambda$, where
\[{\bm a}^{\rm high}_\lambda \coloneqq (0, \underbrace{0, \lambda_{\geq n}}_{2}, \underbrace{0, \lambda_{\geq n}, \lambda_{\geq n -1}}_{3}, \ldots, \underbrace{0, \lambda_{\geq n}, \lambda_{\geq n -1}, \ldots, \lambda_{\geq 2}}_{n}) \in \z^N.\]
\end{thm}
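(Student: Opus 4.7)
The theorem is essentially the Okounkov--Fujita--Oya identification of the Gelfand--Tsetlin polytope as a Newton--Okounkov body, and my plan is to reproduce that argument in the present notation. For $(\mathcal{C}, \mathcal{O}) = (\emptyset, \Pi_A \setminus \Pi_A^\ast)$ all the lifts $\overline{u_{\leq k}}$ equal the identity, so the sequence $X_\bullet^{\emptyset, \Pi_A \setminus \Pi_A^\ast}$ is the standard Schubert flag $X(w_{\geq 1}) \supsetneq \cdots \supsetneq B/B$, and the birational parametrization collapses to $\widehat\Omega(t_1,\ldots,t_N) = \exp(t_1 f_{i_1}) \cdots \exp(t_N f_{i_N}) \bmod B$. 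Under the resulting identification $\c(G/B) \simeq \c(t_1,\ldots,t_N)$, the valuation $\nu_{\emptyset, \Pi_A \setminus \Pi_A^\ast}^{\rm low}$ is the lexicographic lowest-term valuation on $\c(t_1,\ldots,t_N)$.

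The next step is to compute the valuation of $\sigma/\tau_\lambda$ for a section $\sigma \in V(\lambda)^\ast \simeq H^0(G/B, \mathcal{L}_\lambda)$. By \eqref{eq:rational_map_computation}, this amounts to expanding
\[
  \sigma\bigl(\exp(t_1 f_{i_1}) \cdots \exp(t_N f_{i_N}) v_\lambda\bigr)
  = \sum_{{\bm m} \in \z_{\geq 0}^N} \frac{t_1^{m_1}\cdots t_N^{m_N}}{m_1!\cdots m_N!}\, \sigma(f_{i_1}^{m_1} \cdots f_{i_N}^{m_N} v_\lambda)
\]
and reading off the lexicographically smallest exponent ${\bm m}$ with nonzero coefficient, then subtracting the analogous quantity for $\tau_\lambda$. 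I would let $\sigma$ range over the dual of the Gelfand--Tsetlin basis (equivalently, over the upper global basis of $\mathcal{B}(\lambda)$) and use the combinatorics of Littelmann's parametrization attached to the reduced word ${\bm i}_A$ to identify the exponent vector produced by $G^{\rm up}_\lambda(b)$ with the GT pattern corresponding to $b$. The additive shift $-{\bm a}^{\rm high}_\lambda$ records the lowest-order exponent contributed by the denominator $\tau_\lambda\bigl(\widehat\Omega v_\lambda\bigr)$ and has the explicit form asserted because $\tau_\lambda$ decomposes as a product of lowest-weight vectors in the fundamental representations.

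By \cref{p:property_valuation} the image $\nu_{\emptyset,\Pi_A \setminus \Pi_A^\ast}^{\rm low}(H^0(G/B, \mathcal{L}_\lambda) \setminus \{0\}) - \nu_{\emptyset,\Pi_A \setminus \Pi_A^\ast}^{\rm low}(\tau_\lambda)$ has cardinality $\dim V(\lambda)$, and the preceding step shows it coincides set-theoretically with $(GT(\lambda) \cap \z^N) - {\bm a}^{\rm high}_\lambda$. Replacing $\mathcal{L}_\lambda$ by $\mathcal{L}_\lambda^{\otimes k}$ and using \eqref{eq:NO_scalar_multiple} extends the set-theoretic equality to every dilate, so the Newton--Okounkov body contains $GT(\lambda) - {\bm a}^{\rm high}_\lambda$; a dimension/volume comparison via \cref{t:NO_dimension_volume} together with the Minkowski decomposition $GT(\lambda) = \sum_i \lambda_i GT(\varpi_i)$ (which is the $\mathcal{C} = \emptyset$ case of \cref{t:decomposition_property_marked_poset}) forces equality as convex bodies.

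The main obstacle is the monomial computation in the second paragraph: one has to verify that no cancellation occurs when extracting the lowest-order term in the expansion above. I would sidestep this by first proving the theorem for each fundamental weight $\varpi_k$, where $V(\varpi_k)$ is minuscule, the actions of $e_i, f_i$ on global basis vectors are transparent from Section \ref{s:crystal_basis}, and each factor $\exp(t_\ell f_{i_\ell})$ acts on any lower global basis element essentially as $1 + t_\ell f_{i_\ell}$ up to terms in strictly smaller weight subspaces, so that no cancellation is possible. The general case then follows from the fundamental cases by the super-additivity \eqref{eq:super_additivity_NO} of Newton--Okounkov bodies combined with the Minkowski decomposition of $GT(\lambda)$, which upgrades a lattice-point containment into an equality of polytopes.
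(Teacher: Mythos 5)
Your overall strategy is the right one, and it is essentially the route this paper takes: the paper does not reprove this statement (it cites \cite{Oko1, FO, Fuj2}), but the proof of \cref{t:main_thm} specialized to $(\mathcal{C}, \mathcal{O}) = (\emptyset, \Pi_A \setminus \Pi^\ast_A)$ yields exactly it, since then $\tau^{(\mathcal{C}, \mathcal{O})}_\lambda = \tau_\lambda$ and ${\bm d}_\lambda = {\bm a}^{\rm high}_\lambda$. Your reduction to the minuscule fundamental representations (where $\exp(t_\ell f_{i_\ell}) = 1 + t_\ell f_{i_\ell}$ and each monomial coefficient is a single term, so no cancellation), the distinctness of the resulting values, the lattice-point count, and the passage to general $\lambda$ via the Minkowski decomposition of $GT(\lambda)$, the super-additivity \eqref{eq:super_additivity_NO}, and a volume comparison via \cref{t:NO_dimension_volume} and \cref{t:marked_chain-order_Ehrhart} — all of this mirrors Section \ref{ss:proof}.

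There is, however, one concrete error in your second paragraph: you attribute the shift $-{\bm a}^{\rm high}_\lambda$ to the denominator, claiming it is the lowest-order exponent of $\tau_\lambda\bigl(\widehat\Omega\, v_\lambda\bigr)$. In fact $\tau_\lambda$ kills every weight space except $\c v_\lambda$, and every nonconstant monomial in the expansion of $\exp(t_1 f_{i_1}) \cdots \exp(t_N f_{i_N}) v_\lambda$ lies in strictly lower weight spaces, so $\tau_\lambda\bigl(\widehat\Omega\, v_\lambda\bigr) \equiv 1$ and the denominator contributes valuation $(0, \ldots, 0)$; this is exactly \cref{l:reduction_to_action_on_highest} in the present setting. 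Consequently, if (as you state) the lex-lowest exponent of $\Upsilon(G^{\rm up}_\lambda(b))$ were literally the Gelfand--Tsetlin pattern of $b$, your argument would produce $GT(\lambda)$ rather than $GT(\lambda) - {\bm a}^{\rm high}_\lambda$. The translation must instead come from the numerator: the lex-lowest exponent attached to $b$ is the GT pattern of $b$ \emph{minus} ${\bm a}^{\rm high}_\lambda$, because the coordinates that the marking forces to their minimal values (namely $\lambda_{\geq n-j+2}$ in the $j$-th place of each block, $j \geq 2$) correspond to exponent $0$ in $\c[t_1, \ldots, t_N]$ — this is precisely the role of the vector ${\bm d}_{\varpi_k}$ in the paper's computation for fundamental weights. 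Once this bookkeeping is corrected (and the identification of the fundamental-weight values with the lattice points of $GT(\varpi_k) - {\bm a}^{\rm high}_{\varpi_k}$ is carried out, e.g.\ as in Lemmas \ref{l:partial_form_of_tableau_induction}--\ref{l:forms_of_1_in_chain_coordinates}), the remainder of your plan goes through as written.
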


\begin{rem}
The lowest term valuation $\nu_{\emptyset, \Pi_A \setminus \Pi^\ast_A}^{\rm low}$ in \cref{t:NOBY_realization_of_GT} is defined from the reduced word ${\bm i}_A$. 
By \cite[Corollary 5.3]{FO}, \cref{t:NOBY_realization_of_GT} is naturally extended to an arbitrary reduced word of general Lie type and to the corresponding Schubert variety if we replace the translated Gelfand--Tsetlin polytope $GT(\lambda) - {\bm a}^{\rm high}_\lambda$ with a Nakashima--Zelevinsky polytope. 
\end{rem}

\begin{thm}[{\cite[Theorem 2.1]{Kir1}}]\label{t:NOBY_realization_of_FFLV}
Let $(\mathcal{C}, \mathcal{O}) = (\Pi_A \setminus \Pi^\ast_A, \emptyset)$, and $\lambda \in P_+$.
Then the Newton--Okounkov body $\Delta(G/B, \mathcal{L}_\lambda, \nu_{\Pi_A \setminus \Pi^\ast_A, \emptyset}^{\rm low}, \overline{w_0} \tau_\lambda)$ coincides with the FFLV polytope $FFLV(\lambda)$.
\end{thm}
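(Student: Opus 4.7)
My plan is to reduce the theorem to the case of fundamental weights using the Minkowski decomposition of $FFLV(\lambda)$, then handle each fundamental weight case by an explicit computation exploiting the minuscule structure of $V(\varpi_k)$, and finally extend the resulting inclusion to equality via a volume comparison.

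First, I would apply \cref{t:decomposition_property_marked_poset} with $(\mathcal{C}, \mathcal{O}) = (\Pi_A \setminus \Pi_A^\ast, \emptyset)$ to obtain $FFLV(\lambda) = \sum_{k=1}^n \lambda_k FFLV(\varpi_k)$ as a Minkowski sum. Under the multiplication map $H^0(G/B, \mathcal{L}_\mu) \otimes H^0(G/B, \mathcal{L}_\nu) \to H^0(G/B, \mathcal{L}_{\mu+\nu})$, the element $\overline{w_0}\tau_\lambda$ agrees up to a nonzero scalar with $\prod_k (\overline{w_0}\tau_{\varpi_k})^{\lambda_k}$, since both span the one-dimensional highest weight space of $V(\lambda)^\ast$. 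Combined with super-additivity \eqref{eq:super_additivity_NO}, this yields
\[
\sum_{k=1}^n \lambda_k \, \Delta(G/B, \mathcal{L}_{\varpi_k}, \nu_{\Pi_A \setminus \Pi_A^\ast, \emptyset}^{\rm low}, \overline{w_0}\tau_{\varpi_k}) \subseteq \Delta(G/B, \mathcal{L}_\lambda, \nu_{\Pi_A \setminus \Pi_A^\ast, \emptyset}^{\rm low}, \overline{w_0}\tau_\lambda),
\]
so once equality is proved for each $\varpi_k$, I get $FFLV(\lambda) \subseteq \Delta(G/B, \mathcal{L}_\lambda, \nu_{\Pi_A \setminus \Pi_A^\ast, \emptyset}^{\rm low}, \overline{w_0}\tau_\lambda)$ for all $\lambda \in P_+$.

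For the fundamental-weight case, $V(\varpi_k) = \wedge^k \mathbb{C}^{n+1}$ is minuscule with crystal basis $\mathcal{B}(\varpi_k)$ indexed by strictly increasing tuples (\cref{s:crystal_basis}), and the upper global basis $\{G^{\rm up}_{\varpi_k}(b)\}_{b \in \mathcal{B}(\varpi_k)}$ is a $\c$-basis of $H^0(G/B, \mathcal{L}_{\varpi_k})$ of size $\dim V(\varpi_k) = \binom{n+1}{k}$. Since $\nu_{\Pi_A \setminus \Pi_A^\ast, \emptyset}^{\rm low}$ has $1$-dimensional leaves, \cref{p:property_valuation} guarantees that the $\binom{n+1}{k}$ valuations $\nu^{\rm low}(G^{\rm up}_{\varpi_k}(b)/\overline{w_0}\tau_{\varpi_k})$ are all distinct. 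The main computation is to evaluate each such ratio as a rational function in $t_1, \ldots, t_N$ via the parametrization $\widehat{\Omega}_{\Pi_A \setminus \Pi_A^\ast, \emptyset}$ and \eqref{eq:rational_map_computation}, then read off the lex-lowest exponent vector and verify that these exponents are exactly the lattice points of $FFLV(\varpi_k)$. To promote the inclusion of the previous step to equality at general $\lambda$, I would invoke \cref{t:NO_dimension_volume}: for regular dominant $\lambda$ both $FFLV(\lambda)$ and the Newton--Okounkov body are $N$-dimensional with top volume $\deg(\rho_\lambda(G/B))/N!$ (on the $FFLV$ side by the original volume theorem of Feigin--Fourier--Littelmann--Vinberg), forcing equality; the non-regular case then follows by a density argument together with \eqref{eq:NO_scalar_multiple} and Minkowski linearity.

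The main obstacle is the explicit computation for fundamental weights. Concretely, one must track the lex-lowest monomial in $t_1, \ldots, t_N$ of each coefficient of $\overline{s}_{i_1}\exp(t_1 f_{i_1}) \cdots \overline{s}_{i_N}\exp(t_N f_{i_N}) v_{\varpi_k}$ expanded in the standard basis of $\wedge^k \c^{n+1}$. The reduced word ${\bm i}_A = (1, 2, 1, 3, 2, 1, \ldots, n, \ldots, 1)$ is arranged so that the positions $1, \ldots, N$ correspond to the poset elements $q_j^{(i)}$ of $\Pi_A \setminus \Pi_A^\ast$ via the arrangement \eqref{eq:arrangement_of_poset}, and the combinatorial content of the proof is to verify that these lowest exponents correspond exactly to the chain inequalities defining $FFLV(\varpi_k)$. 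Establishing this bijection --- essentially identifying the lex-lowest term of a Plücker-type coordinate with a chain datum in the Gelfand--Tsetlin poset --- is the technical heart of the argument.
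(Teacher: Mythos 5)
Your strategy is sound, and it is worth noting that the paper itself does not prove this statement at all: it is quoted from Kiritchenko \cite[Theorem 2.1]{Kir1}, whose proof is geometric (a combinatorial proof is in \cite[Section 3]{Kir2}). What you outline is instead essentially the paper's proof of the general \cref{t:main_thm} specialized to $(\mathcal{C}, \mathcal{O}) = (\Pi_A \setminus \Pi_A^\ast, \emptyset)$; indeed, in that case ${\bm d}_\lambda = 0$, $\overline{w}_{\mathcal{C},\mathcal{O}} = \overline{w_0}$, and $\tau_\lambda^{(\mathcal{C},\mathcal{O})} \in \c^\times \overline{w_0}\tau_\lambda$, so \eqref{eq:main_result} gives exactly this theorem. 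So the route (Minkowski decomposition via \cref{t:decomposition_property_marked_poset}, minuscule/crystal computation for each $\varpi_k$, super-additivity \eqref{eq:super_additivity_NO}, volume comparison via \cref{t:marked_chain-order_Ehrhart} and \cref{t:NO_dimension_volume}) is the right one and matches the paper's own machinery rather than Kiritchenko's argument.

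There are, however, two genuine gaps. First, the step you yourself call the ``technical heart'' --- computing $\nu^{\rm low}_{\Pi_A\setminus\Pi_A^\ast,\emptyset}(\Upsilon_{\mathcal{C},\mathcal{O}}(G^{\rm up}_{\varpi_k}(b)))$ for every $b \in \mathcal{B}(\varpi_k)$ and identifying these exponents with the lattice points of $FFLV(\varpi_k)$ --- is only described, not carried out. In the paper this is precisely the content of Lemmas \ref{l:values_are_distinct}--\ref{l:forms_of_1_in_chain_coordinates}: one uses that $G^{\rm up}_{\varpi_k}(b)(\overline{u_1} f_{i_1}^{a_1}\cdots \overline{u_N} f_{i_N}^{a_N} v_{\varpi_k}) \neq 0$ iff $u_1 \tilde{f}_{i_1}^{a_1}\cdots u_N \tilde{f}_{i_N}^{a_N} b_{\varpi_k} = b$, and then an induction along the columns of the tableau pins down the lex-lowest exponent; without this the argument is an outline, not a proof. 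Second, your justification of distinctness is incorrect: \cref{p:property_valuation} only asserts the \emph{existence} of some basis with pairwise distinct values, not that the specific basis $\{G^{\rm up}_{\varpi_k}(b)\}$ has this property; distinctness here is a consequence of the crystal computation (the lowest-term exponent determines $b$), i.e.\ of \cref{l:values_are_distinct}, and it is needed before the lattice-point count \eqref{eq:marked_chain_order_number_of_lattice_points} can be used to conclude that the values exhaust $FFLV(\varpi_k)\cap\z^N$. Minor further points: your ``density argument'' for non-regular $\lambda$ is vague (a Minkowski cancellation argument with a regular weight, or the uniform volume/Ehrhart comparison the paper uses, would make it precise), and note that only the inclusion $FFLV(\varpi_k) \subseteq \Delta(G/B, \mathcal{L}_{\varpi_k}, \nu^{\rm low}, \overline{w_0}\tau_{\varpi_k})$ is needed at the fundamental-weight stage, not equality.
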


\begin{rem}
The proof of \cref{t:NOBY_realization_of_FFLV} in \cite[Theorem 2.1]{Kir1} uses some geometric arguments. 
A more combinatorial proof is given in \cite[Section 3]{Kir2}. 
\end{rem}

We write 
\[\overline{w}_{\mathcal{C}, \mathcal{O}} \coloneqq \Omega_{\mathcal{C}, \mathcal{O}} (0, \ldots, 0) = \overline{u_1} \cdot \overline{u_2} \cdots \overline{u_N} \in N_G(H),\]
and set 
\[w_{\mathcal{C}, \mathcal{O}} \coloneqq \overline{w}_{\mathcal{C}, \mathcal{O}} \bmod H = u_1 u_2 \cdots u_N \in W.\]
Note that $\overline{w}_{\mathcal{C}, \mathcal{O}} \neq \overline{w_{\mathcal{C}, \mathcal{O}}}$ in general. 
For $\lambda \in P_+$, denote by $V(\lambda, w_{\mathcal{C}, \mathcal{O}})$ the $\c$-subspace of $V(\lambda)$ spanned by weight vectors whose weights are different from $w_{\mathcal{C}, \mathcal{O}} \lambda$. 
Then, since $\overline{w}_{\mathcal{C}, \mathcal{O}} v_\lambda$ is an extremal weight vector, it follows that 
\[V(\lambda) = V(\lambda, w_{\mathcal{C}, \mathcal{O}}) \oplus \c \overline{w}_{\mathcal{C}, \mathcal{O}} v_\lambda.\]
We define $\tau^{(\mathcal{C}, \mathcal{O})}_\lambda \in H^0(G/B, \mathcal{L}_\lambda) = V(\lambda)^\ast$ by 
\begin{equation}\label{eq:specific_section_tau}
\begin{aligned}
&\tau^{(\mathcal{C}, \mathcal{O})}_\lambda (\overline{w}_{\mathcal{C}, \mathcal{O}} v_\lambda) = 1\ \text{and}\\
&\tau^{(\mathcal{C}, \mathcal{O})}_\lambda (v) = 0\ \text{for\ all}\ v \in V(\lambda, w_{\mathcal{C}, \mathcal{O}}).
\end{aligned}
\end{equation}
For $\sigma \in H^0(G/B, \mathcal{L}_\lambda) = V(\lambda)^\ast$, define $\Upsilon_{\mathcal{C}, \mathcal{O}} (\sigma) \in \c[t_1, \ldots, t_N]$ by 
\[\Upsilon_{\mathcal{C}, \mathcal{O}} (\sigma) \coloneqq \sigma (\Omega_{\mathcal{C}, \mathcal{O}} (t_1, \ldots, t_N) v_\lambda).\]

\begin{lem}\label{l:reduction_to_action_on_highest}
For each $\lambda \in P_+$ and $\sigma \in H^0(G/B, \mathcal{L}_\lambda)$, it holds that 
\[\nu_{\mathcal{C}, \mathcal{O}}^{\rm low} (\sigma/\tau^{(\mathcal{C}, \mathcal{O})}_\lambda) = \nu_{\mathcal{C}, \mathcal{O}}^{\rm low} (\Upsilon_{\mathcal{C}, \mathcal{O}} (\sigma)).\]
\end{lem}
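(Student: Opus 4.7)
The plan is to unfold both sides via the identification $\c(G/B) \simeq \c(t_1, \dots, t_N)$ provided by the birational morphism $\widehat{\Omega}_{\mathcal{C},\mathcal{O}}$, so that the lemma reduces to a statement about a single polynomial. Using \eqref{eq:rational_map_computation}, the rational function $\sigma / \tau^{(\mathcal{C},\mathcal{O})}_\lambda \in \c(G/B)$ evaluates at $\Omega_{\mathcal{C},\mathcal{O}}(t_1,\dots,t_N) \bmod B$ to
\[
\frac{\sigma(\Omega_{\mathcal{C},\mathcal{O}}(t_1,\dots,t_N) v_\lambda)}{\tau^{(\mathcal{C},\mathcal{O})}_\lambda(\Omega_{\mathcal{C},\mathcal{O}}(t_1,\dots,t_N) v_\lambda)} = \frac{\Upsilon_{\mathcal{C},\mathcal{O}}(\sigma)}{\Upsilon_{\mathcal{C},\mathcal{O}}(\tau^{(\mathcal{C},\mathcal{O})}_\lambda)},
\]
where I need $\tau^{(\mathcal{C},\mathcal{O})}_\lambda(\Omega_{\mathcal{C},\mathcal{O}}(t_1,\dots,t_N) v_\lambda)$ to be nonzero as an element of $\c(t_1,\dots,t_N)$; this will be confirmed in the last step.

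By \cref{d:valuation}\,(i), $\nu_{\mathcal{C},\mathcal{O}}^{\rm low}$ is multiplicative on quotients, so it suffices to establish the single identity
\[
\nu_{\mathcal{C},\mathcal{O}}^{\rm low}\bigl(\Upsilon_{\mathcal{C},\mathcal{O}}(\tau^{(\mathcal{C},\mathcal{O})}_\lambda)\bigr) = (0,0,\dots,0).
\]
Once this is shown, the lemma follows immediately from $\nu_{\mathcal{C},\mathcal{O}}^{\rm low}(\sigma/\tau^{(\mathcal{C},\mathcal{O})}_\lambda) = \nu_{\mathcal{C},\mathcal{O}}^{\rm low}(\Upsilon_{\mathcal{C},\mathcal{O}}(\sigma)) - \nu_{\mathcal{C},\mathcal{O}}^{\rm low}(\Upsilon_{\mathcal{C},\mathcal{O}}(\tau^{(\mathcal{C},\mathcal{O})}_\lambda))$.

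The last step is the main point. Expanding each $\exp(t_k f_{i_k})$ as a polynomial in $t_k$ (which terminates because $V(\lambda)$ is finite-dimensional), I see that $\Omega_{\mathcal{C},\mathcal{O}}(t_1,\dots,t_N) v_\lambda$ lies in $V(\lambda) \otimes \c[t_1,\dots,t_N]$, so $\Upsilon_{\mathcal{C},\mathcal{O}}(\tau^{(\mathcal{C},\mathcal{O})}_\lambda)$ is a polynomial in $t_1,\dots,t_N$, all of whose monomials carry nonnegative exponents. The constant term is obtained by specializing $t_1 = \cdots = t_N = 0$, which gives
\[
\tau^{(\mathcal{C},\mathcal{O})}_\lambda\bigl(\overline{u_1}\,\overline{u_2}\cdots\overline{u_N}\, v_\lambda\bigr) = \tau^{(\mathcal{C},\mathcal{O})}_\lambda(\overline{w}_{\mathcal{C},\mathcal{O}} v_\lambda) = 1
\]
by the defining property \eqref{eq:specific_section_tau} of $\tau^{(\mathcal{C},\mathcal{O})}_\lambda$. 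In the lexicographic order with $t_1 > \cdots > t_N$, any nonzero exponent vector $(a_1,\dots,a_N)$ with $a_k \in \z_{\geq 0}$ satisfies $(a_1,\dots,a_N) > (0,\dots,0)$ because its first nonzero entry is strictly positive; hence the lex-smallest monomial of $\Upsilon_{\mathcal{C},\mathcal{O}}(\tau^{(\mathcal{C},\mathcal{O})}_\lambda)$ is the constant monomial, giving $\nu_{\mathcal{C},\mathcal{O}}^{\rm low}(\Upsilon_{\mathcal{C},\mathcal{O}}(\tau^{(\mathcal{C},\mathcal{O})}_\lambda)) = 0$ as required.

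The only subtlety worth flagging is the verification that the constant term is exactly $1$ (not merely nonzero), which relies on $\overline{w}_{\mathcal{C},\mathcal{O}} v_\lambda$ being an extremal weight vector of weight $w_{\mathcal{C},\mathcal{O}} \lambda$, together with the orthogonality clause of \eqref{eq:specific_section_tau}; this is precisely the reason the section $\tau^{(\mathcal{C},\mathcal{O})}_\lambda$ was defined in that specific way, and there is no real obstacle once the definitions are unwound.
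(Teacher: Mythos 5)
Your proposal is correct and follows essentially the same route as the paper's proof: reduce via \eqref{eq:rational_map_computation} to the identity $\sigma/\tau^{(\mathcal{C},\mathcal{O})}_\lambda = \Upsilon_{\mathcal{C},\mathcal{O}}(\sigma)/\Upsilon_{\mathcal{C},\mathcal{O}}(\tau^{(\mathcal{C},\mathcal{O})}_\lambda)$ and then show that $\Upsilon_{\mathcal{C},\mathcal{O}}(\tau^{(\mathcal{C},\mathcal{O})}_\lambda)$ has constant term $1$, hence valuation $(0,\dots,0)$. Your extra remark that nonnegative exponents force the constant monomial to be lex-smallest just spells out a step the paper leaves implicit.
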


\begin{proof}
By the definition of $\tau^{(\mathcal{C}, \mathcal{O})}_\lambda$, the constant term of $\Upsilon_{\mathcal{C}, \mathcal{O}} (\tau^{(\mathcal{C}, \mathcal{O})}_\lambda) \in \c[t_1, \ldots, t_N]$ is given by 
\begin{align*}
\tau^{(\mathcal{C}, \mathcal{O})}_\lambda (\Omega_{\mathcal{C}, \mathcal{O}} (0, \ldots, 0) v_\lambda) = \tau^{(\mathcal{C}, \mathcal{O})}_\lambda (\overline{w}_{\mathcal{C}, \mathcal{O}} v_\lambda) = 1. 
\end{align*}
In particular, we have $\Upsilon_{\mathcal{C}, \mathcal{O}} (\tau^{(\mathcal{C}, \mathcal{O})}_\lambda) \neq 0$ and $\nu_{\mathcal{C}, \mathcal{O}}^{\rm low} (\Upsilon_{\mathcal{C}, \mathcal{O}} (\tau^{(\mathcal{C}, \mathcal{O})}_\lambda)) = (0, \ldots, 0)$.
By \eqref{eq:rational_map_computation}, it holds in $\c(G/B) = \c(t_1, \ldots, t_N)$ that 
\begin{align*}
\sigma/\tau^{(\mathcal{C}, \mathcal{O})}_\lambda = \Upsilon_{\mathcal{C}, \mathcal{O}} (\sigma)/\Upsilon_{\mathcal{C}, \mathcal{O}} (\tau^{(\mathcal{C}, \mathcal{O})}_\lambda),
\end{align*}
which implies that 
\begin{align*}
\nu_{\mathcal{C}, \mathcal{O}}^{\rm low} (\sigma/\tau^{(\mathcal{C}, \mathcal{O})}_\lambda) &= \nu_{\mathcal{C}, \mathcal{O}}^{\rm low} (\Upsilon_{\mathcal{C}, \mathcal{O}} (\sigma)) - \nu_{\mathcal{C}, \mathcal{O}}^{\rm low} (\Upsilon_{\mathcal{C}, \mathcal{O}} (\tau^{(\mathcal{C}, \mathcal{O})}_\lambda)) = \nu_{\mathcal{C}, \mathcal{O}}^{\rm low} (\Upsilon_{\mathcal{C}, \mathcal{O}} (\sigma)).
\end{align*}
This proves the lemma.
\end{proof}

\section{Main result}\label{s:main}

In this section, we realize the marked chain-order polytope $\Delta_{\mathcal{C}, \mathcal{O}}(\Pi_A, \Pi^\ast_A, \lambda)$ as a Newton--Okounkov body of $(G/B, \mathcal{L}_\lambda)$. 
In Section \ref{ss:statement}, we give the statement of our main result and its application to toric degenerations.  
The proof of our main result is given in Section \ref{ss:proof}.
Section \ref{ss:explicit basis} is devoted to constructing a specific basis of $V(\lambda)$ which is naturally parametrized by the set of lattice points in $\Delta_{\mathcal{C}, \mathcal{O}}(\Pi_A, \Pi^\ast_A, \lambda)$. 
In Section \ref{ss:highest term}, we compare our main result with the highest term valuation $\nu_{\mathcal{C}, \mathcal{O}}^{\rm high}$. 

\subsection{Statement and applications}\label{ss:statement}

The following is the main result of the present paper.

\begin{thm}\label{t:main_thm}
For each partition $\Pi_A \setminus \Pi^\ast_A = \mathcal{C} \sqcup \mathcal{O}$, $\lambda \in P_+$, and $\tau \in H^0(G/B, \mathcal{L}_\lambda) \setminus \{0\}$, the Newton--Okounkov body $\Delta(G/B, \mathcal{L}_\lambda, \nu_{\mathcal{C}, \mathcal{O}}^{\rm low}, \tau)$ coincides with the marked chain-order polytope $\Delta_{\mathcal{C}, \mathcal{O}}(\Pi_A, \Pi^\ast_A, \lambda)$ up to translations by integer vectors.
\end{thm}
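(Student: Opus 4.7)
The plan is to use \cref{l:reduction_to_action_on_highest} to reduce the computation of $\Delta(G/B, \mathcal{L}_\lambda, \nu_{\mathcal{C}, \mathcal{O}}^{\rm low}, \tau)$ to the analysis of the polynomials $\Upsilon_{\mathcal{C}, \mathcal{O}}(\sigma) \in \c[t_1, \ldots, t_N]$ for global sections $\sigma$, and then to reduce from a general $\lambda \in P_+$ to the case of fundamental weights $\varpi_k$, $k \in I$. Since the Newton--Okounkov body is independent of $\tau$ up to integer translation, I fix $\tau = \tau^{(\mathcal{C}, \mathcal{O})}_\lambda$ throughout. For the reduction to fundamental weights, I combine the Minkowski decomposition \cref{t:decomposition_property_marked_poset} with super-additivity \eqref{eq:super_additivity_NO}: writing $\lambda = \sum_k \lambda_k \varpi_k$, the inclusion
\[\sum_k \lambda_k \Delta(G/B, \mathcal{L}_{\varpi_k}, \nu_{\mathcal{C}, \mathcal{O}}^{\rm low}, \tau^{(\mathcal{C}, \mathcal{O})}_{\varpi_k}) \subseteq \Delta(G/B, \mathcal{L}_\lambda, \nu_{\mathcal{C}, \mathcal{O}}^{\rm low}, \tau^{(\mathcal{C}, \mathcal{O})}_\lambda)\]
combined with the fundamental-weight case (up to translation) would give one containment, and the reverse follows from a dimension/volume count: \eqref{eq:marked_chain_order_number_of_lattice_points} tells us $|\Delta_{\mathcal{C}, \mathcal{O}}^\z(\Pi_A, \Pi^\ast_A, k\lambda)| = \dim V(k\lambda)$, whose leading coefficient in $k$ matches the degree of $G/B \hookrightarrow \mathbb{P}(V(\lambda)^\ast)$ via \cref{t:NO_dimension_volume}, forcing equality of the two convex bodies.

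For the fundamental-weight case, I would argue as follows. Because $V(\varpi_k)$ is minuscule, each weight space is one-dimensional, $\varepsilon_i(b), \varphi_i(b) \in \{0,1\}$ for all $b \in \mathcal{B}(\varpi_k)$, and consequently $f_i^2$ annihilates every weight vector; therefore
\[\exp(t f_i) G^{\rm low}_{\varpi_k}(b') = G^{\rm low}_{\varpi_k}(b') + t\, c\, G^{\rm low}_{\varpi_k}(\tilde{f}_i b')\]
for a nonzero constant $c \in \c^\times$ (with the second term zero when $\tilde{f}_i b' = 0$). Iterating from right to left,
\[\Omega_{\mathcal{C}, \mathcal{O}}(t_1, \ldots, t_N)\, v_{\varpi_k} = \overline{u_1} \exp(t_1 f_{i_1}) \cdots \overline{u_N} \exp(t_N f_{i_N})\, v_{\varpi_k}\]
expands as a sum over sequences $\epsilon = (\epsilon_1, \ldots, \epsilon_N) \in \{0,1\}^N$, each contributing a monomial $c_\epsilon\, t_1^{\epsilon_1} \cdots t_N^{\epsilon_N}$ times an extremal weight vector $G^{\rm low}_{\varpi_k}(b_\epsilon)$, where $b_\epsilon \in \mathcal{B}(\varpi_k)$ is obtained from $b_{\varpi_k}$ by the zigzag composition that alternates between $\tilde{f}_{i_k}^{\epsilon_k}$ and, whenever $q_k \in \mathcal{C}$, the action of $s_{i_k}$ on the crystal. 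Pairing with $G^{\rm up}_{\varpi_k}(b)$ via the dual basis then gives $\Upsilon_{\mathcal{C}, \mathcal{O}}(G^{\rm up}_{\varpi_k}(b)) = \sum_{\epsilon: b_\epsilon = b} c_\epsilon\, t_1^{\epsilon_1} \cdots t_N^{\epsilon_N}$, and $\nu_{\mathcal{C}, \mathcal{O}}^{\rm low}(\Upsilon_{\mathcal{C}, \mathcal{O}}(G^{\rm up}_{\varpi_k}(b)))$ is the lexicographically smallest such $\epsilon$.

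The main obstacle will be the combinatorial identification of these lex-minimal exponent vectors with the image under the transfer map $\phi_{\mathcal{C}, \mathcal{O}}$ of the Gelfand--Tsetlin pattern associated to $b$, modulo the fixed translation ${\bm a}^{\rm high}_{\varpi_k}$ from \cref{t:NOBY_realization_of_GT}. In the extreme cases $(\mathcal{C}, \mathcal{O}) = (\emptyset, \Pi_A \setminus \Pi^\ast_A)$ and $(\mathcal{C}, \mathcal{O}) = (\Pi_A \setminus \Pi^\ast_A, \emptyset)$ this matching reduces to \cref{t:NOBY_realization_of_GT} and \cref{t:NOBY_realization_of_FFLV}, respectively, so the task is to interpolate: at each position $k$, the factor $\overline{u_k}$ determines whether the next Kashiwara step is applied before or after a Weyl reflection $s_{i_k}$, and the piecewise-affine $\min$ appearing in the definition of $\phi_{\mathcal{C}, \mathcal{O}}$ should materialize as the minimum of the relevant $\epsilon_k$ over all completions to a sequence reaching $b$ with later coordinates already lex-minimal. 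This should be checked by induction on $k$, using the explicit crystal action described in Section \ref{s:crystal_basis} and the fact that for each $b \in \mathcal{B}(\varpi_k)$ the value $\varepsilon_i(b) \in \{0,1\}$ controls exactly when a chain-variable decrement is forced.

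Once the fundamental-weight identification is established, \cref{p:property_valuation} implies $\nu_{\mathcal{C}, \mathcal{O}}^{\rm low}(H^0(G/B, \mathcal{L}_{\varpi_k}) \setminus \{0\})$ has cardinality $\dim V(\varpi_k) = |\mathcal{B}(\varpi_k)|$, and by \eqref{eq:marked_chain_order_number_of_lattice_points} this equals $|\Delta_{\mathcal{C}, \mathcal{O}}^\z(\Pi_A, \Pi^\ast_A, \varpi_k)|$; so the valuation image, translated by the appropriate integer vector, coincides with $\Delta_{\mathcal{C}, \mathcal{O}}^\z(\Pi_A, \Pi^\ast_A, \varpi_k)$, and the Newton--Okounkov body equals the marked chain-order polytope up to this integer translation. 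The general case then follows by the Minkowski/volume argument outlined above, completing the proof of \cref{t:main_thm}.
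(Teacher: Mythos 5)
Your overall architecture is the same as the paper's: fix $\tau=\tau^{(\mathcal{C},\mathcal{O})}_\lambda$, use \cref{l:reduction_to_action_on_highest} to pass to the polynomials $\Upsilon_{\mathcal{C},\mathcal{O}}(\sigma)$, expand $\Omega_{\mathcal{C},\mathcal{O}}(t_1,\ldots,t_N)v_{\varpi_k}$ through the minuscule crystal so that $\nu^{\rm low}_{\mathcal{C},\mathcal{O}}(\Upsilon_{\mathcal{C},\mathcal{O}}(G^{\rm up}_{\varpi_k}(b)))$ is the lexicographically minimal $\epsilon\in\{0,1\}^N$ with $u_1\tilde f_{i_1}^{\epsilon_1}\cdots u_N\tilde f_{i_N}^{\epsilon_N}b_{\varpi_k}=b$, and then pass to general $\lambda$ via \cref{t:decomposition_property_marked_poset}, \eqref{eq:super_additivity_NO}, and the volume comparison coming from \cref{t:marked_chain-order_Ehrhart} and \cref{t:NO_dimension_volume}. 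All of those reductions are sound. The genuine gap is that the core of the argument --- proving that these lex-minimal exponent vectors actually lie in a fixed integer translate of $\Delta_{\mathcal{C},\mathcal{O}}(\Pi_A,\Pi^\ast_A,\varpi_k)$ --- is only announced, not carried out: you defer it to an unspecified ``induction on $k$'' interpolating between the two extreme cases. In the paper this is precisely the content of Lemmas \ref{l:partial_form_of_tableau_induction}--\ref{l:forms_of_1_in_chain_coordinates}: one builds an explicit candidate vector $(\hat a_1,\ldots,\hat a_N)$ block by block, shows by a tableau-shape induction that it equals the lex-minimal admissible vector, and then verifies that it satisfies the chain-order inequalities. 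Without some version of this computation the fundamental-weight case, and hence the theorem, is not established.

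Moreover, the target you propose for the identification is off, which suggests the interpolation has not yet been worked out. You predict that the valuation of $G^{\rm up}_{\varpi_k}(b)$ is the transfer-map image under $\phi_{\mathcal{C},\mathcal{O}}$ of the Gelfand--Tsetlin pattern of $b$ shifted by ${\bm a}^{\rm high}_{\varpi_k}$; but the correct shift is partition-dependent: it is the vector ${\bm d}_{\varpi_k}$ equal to $1$ exactly at the order positions $q^{(m)}_\ell\in\mathcal{O}$ with $\ell\ge n-k+2$ and $0$ at all chain positions. Since ${\bm a}^{\rm high}_{\varpi_k}$ has entry $1$ at every position $\ell\ge n-k+2$ of each block, subtracting it would produce the value $-1$ at chain positions where $\Delta_{\mathcal{C},\mathcal{O}}(\Pi_A,\Pi^\ast_A,\varpi_k)$ has coordinate $0$, whereas your exponent vectors are nonnegative; so the proposed normalization can only be right in the pure-order case. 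Note also that a pointwise matching with transfer-map images is more than is needed: as in the paper, it suffices to show that each value lies in the translated polytope, observe that the values are pairwise distinct (the lex-minimal $\epsilon$ determines $b=b_\epsilon$), and then invoke $|\Delta^{\z}_{\mathcal{C},\mathcal{O}}(\Pi_A,\Pi^\ast_A,\varpi_k)|=\dim_\c V(\varpi_k)$ from \eqref{eq:marked_chain_order_number_of_lattice_points} to force the two lattice-point sets to coincide.
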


We give a proof of \cref{t:main_thm} in the next subsection.
In the rest of this subsection, we see some applications of \cref{t:main_thm}.

\begin{cor}\label{c:of_main_result}
Let $\Pi_A \setminus \Pi^\ast_A = \mathcal{C} \sqcup \mathcal{O}$ be a partition, $\lambda \in P_+$, and $\tau \in H^0(G/B, \mathcal{L}_\lambda) \setminus \{0\}$.
\begin{enumerate}
\item[{\rm (1)}] The semigroup $S(G/B, \mathcal{L}_\lambda, \nu_{\mathcal{C}, \mathcal{O}}^{\rm low}, \tau)$ is finitely generated and saturated. 
\item[{\rm (2)}] The real closed cone $C(G/B, \mathcal{L}_\lambda, \nu_{\mathcal{C}, \mathcal{O}}^{\rm low}, \tau)$ is a rational convex polyhedral cone, and the following equality holds: 
\[S(G/B, \mathcal{L}_\lambda, \nu_{\mathcal{C}, \mathcal{O}}^{\rm low}, \tau) = C(G/B, \mathcal{L}_\lambda, \nu_{\mathcal{C}, \mathcal{O}}^{\rm low}, \tau) \cap (\z_{>0} \times \z^N).\]
\item[{\rm (3)}] The Newton--Okounkov body $\Delta(G/B, \mathcal{L}_\lambda, \nu_{\mathcal{C}, \mathcal{O}}^{\rm low}, \tau)$ is an integral convex polytope, and it holds that 
\[\Delta(G/B, \mathcal{L}_\lambda, \nu_{\mathcal{C}, \mathcal{O}}^{\rm low}, \tau) \cap \z^N = \{\nu_{\mathcal{C}, \mathcal{O}}^{\rm low} (\sigma/\tau) \mid \sigma \in H^0(G/B, \mathcal{L}_\lambda) \setminus \{0\}\}.\]
\end{enumerate}
\end{cor}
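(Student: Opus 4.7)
The plan is to deduce all three parts as formal consequences of \cref{t:main_thm}, combined with the lattice-point count \eqref{eq:marked_chain_order_number_of_lattice_points}, the scaling property \eqref{eq:NO_scalar_multiple}, and \cref{p:property_valuation}. Since the defining inequalities of $\Delta_{\mathcal{C}, \mathcal{O}}(\Pi_A, \Pi_A^\ast, \lambda)$ are linear in the marking, one has $k \cdot \Delta_{\mathcal{C}, \mathcal{O}}(\Pi_A, \Pi_A^\ast, \lambda) = \Delta_{\mathcal{C}, \mathcal{O}}(\Pi_A, \Pi_A^\ast, k\lambda)$ for every $k \in \z_{>0}$. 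Invoking \cref{t:main_thm} for $k\lambda$ together with \eqref{eq:NO_scalar_multiple} then shows that $k \cdot \Delta(G/B, \mathcal{L}_\lambda, \nu_{\mathcal{C}, \mathcal{O}}^{\rm low}, \tau)$ is a translate of $\Delta_{\mathcal{C}, \mathcal{O}}(\Pi_A, \Pi_A^\ast, k\lambda)$ by an integer vector; in particular every dilate of the Newton--Okounkov body is an integral convex polytope, giving the integrality assertion in part~(3).

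For the explicit description of lattice points, I would fix $k \in \z_{>0}$ and compare two finite subsets of $k \cdot \Delta(G/B, \mathcal{L}_\lambda, \nu_{\mathcal{C}, \mathcal{O}}^{\rm low}, \tau) \cap \z^N$: the image $V_k \coloneqq \{\nu_{\mathcal{C}, \mathcal{O}}^{\rm low}(\sigma/\tau^k) \mid \sigma \in H^0(G/B, \mathcal{L}_{k\lambda}) \setminus \{0\}\}$ and the full lattice-point set of the dilate. \cref{p:property_valuation}(2) yields $|V_k| = \dim_\c H^0(G/B, \mathcal{L}_{k\lambda}) = \dim_\c V(k\lambda)$, while \eqref{eq:marked_chain_order_number_of_lattice_points} applied with marking $k\lambda$, combined with the previous step, gives the same cardinality for the set of lattice points. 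Since $V_k$ is visibly contained in that set, the inclusion must be an equality. Setting $k = 1$ proves (3).

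Part~(2) then follows by repackaging the slice-by-slice information. The preceding argument shows that $S(G/B, \mathcal{L}_\lambda, \nu_{\mathcal{C}, \mathcal{O}}^{\rm low}, \tau) \cap (\{k\} \times \z^N)$ equals $\{k\} \times (k \Delta \cap \z^N)$ for every $k \geq 1$, where $\Delta \coloneqq \Delta(G/B, \mathcal{L}_\lambda, \nu_{\mathcal{C}, \mathcal{O}}^{\rm low}, \tau)$. Since $\Delta$ is a rational polytope, the real closed cone $C(G/B, \mathcal{L}_\lambda, \nu_{\mathcal{C}, \mathcal{O}}^{\rm low}, \tau)$ coincides with the rational polyhedral cone over $\{1\} \times \Delta$, whose integer points in $\z_{>0} \times \z^N$ are exactly the disjoint union of these slices; this gives the asserted equality $S = C \cap (\z_{>0} \times \z^N)$ and, in particular, rational polyhedrality of $C$.

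Part~(1) is then standard: compactness of $\Delta$ forces $C \cap (\{0\} \times \r^N) = \{(0, 0)\}$, so Gordan's lemma supplies a finite generating set for the monoid $C \cap \z^{N+1}$, from which $S$ inherits finite generation, and saturation is built into the cone description from part~(2). The main obstacle lies not in this corollary but in the underlying \cref{t:main_thm}; once that polytopal identification is in hand, the only point of care here is to track the integer translation so that the scaling identity $k \cdot \Delta_{\mathcal{C}, \mathcal{O}}(\Pi_A, \Pi_A^\ast, \lambda) = \Delta_{\mathcal{C}, \mathcal{O}}(\Pi_A, \Pi_A^\ast, k\lambda)$ translates cleanly between the polytope and Newton--Okounkov sides.
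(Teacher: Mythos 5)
Your proposal is correct and takes essentially the same route as the paper: \cref{t:main_thm} together with the scaling identity \eqref{eq:NO_scalar_multiple}, the lattice-point count \eqref{eq:marked_chain_order_number_of_lattice_points}, and \cref{p:property_valuation}~(2) yield part (3), from which the cone description in (2) and then (1) via Gordan's lemma follow exactly as in the paper. The only minor difference is that you deduce $k\,\Delta_{\mathcal{C}, \mathcal{O}}(\Pi_A, \Pi^\ast_A, \lambda) = \Delta_{\mathcal{C}, \mathcal{O}}(\Pi_A, \Pi^\ast_A, k\lambda)$ directly from the homogeneity of the defining inequalities in the marking, whereas the paper invokes the Minkowski decomposition of \cref{t:decomposition_property_marked_poset}; both are valid.
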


\begin{proof}
By \cref{t:main_thm}, there exists ${\bm a}_\lambda \in \z^N$ such that 
\[\Delta(G/B, \mathcal{L}_\lambda, \nu_{\mathcal{C}, \mathcal{O}}^{\rm low}, \tau) = \Delta_{\mathcal{C}, \mathcal{O}}(\Pi_A, \Pi^\ast_A, \lambda) - {\bm a}_\lambda,\]
which implies that $\Delta(G/B, \mathcal{L}_\lambda, \nu_{\mathcal{C}, \mathcal{O}}^{\rm low}, \tau)$ is an integral convex polytope.
By \eqref{eq:NO_scalar_multiple} and \cref{t:decomposition_property_marked_poset}, we obtain that 
\begin{align*}
\Delta(G/B, \mathcal{L}_{\lambda}^{\otimes k}, \nu_{\mathcal{C}, \mathcal{O}}^{\rm low}, \tau^k) &= k\Delta(G/B, \mathcal{L}_{\lambda}, \nu_{\mathcal{C}, \mathcal{O}}^{\rm low}, \tau)\\
&= \Delta_{\mathcal{C}, \mathcal{O}}(\Pi_A, \Pi^\ast_A, k\lambda) - k {\bm a}_\lambda
\end{align*}
for all $k \in \z_{>0}$.
Note that $\mathcal{L}_{\lambda}^{\otimes k} = \mathcal{L}_{k \lambda}$.
By \eqref{eq:marked_chain_order_number_of_lattice_points}, the number of lattice points in $\Delta_{\mathcal{C}, \mathcal{O}}(\Pi_A, \Pi^\ast_A, k\lambda)$ coincides with $\dim_\c(H^0(G/B, \mathcal{L}_{k\lambda}))$ for each $k \in \z_{>0}$.
Hence we see by \cref{p:property_valuation} (2) that 
\[\{\nu_{\mathcal{C}, \mathcal{O}}^{\rm low} (\sigma/\tau^k) \mid \sigma \in H^0(G/B, \mathcal{L}_{k\lambda}) \setminus \{0\}\} = (\Delta_{\mathcal{C}, \mathcal{O}}(\Pi_A, \Pi^\ast_A, k\lambda) \cap \z^N) - k {\bm a}_\lambda.\]
Then part (3) follows by the case $k = 1$.
In addition, we obtain that 
\[S(G/B, \mathcal{L}_\lambda, \nu_{\mathcal{C}, \mathcal{O}}^{\rm low}, \tau) = \bigcup_{k \in \z_{>0}} \{(k, {\bm a} - k {\bm a}_\lambda) \mid {\bm a} \in \Delta_{\mathcal{C}, \mathcal{O}}(\Pi_A, \Pi^\ast_A, k\lambda) \cap \z^N\}.\]
Hence we see by the definition of $\Delta_{\mathcal{C}, \mathcal{O}}(\Pi_A, \Pi^\ast_A, k\lambda)$ that $S(G/B, \mathcal{L}_\lambda, \nu_{\mathcal{C}, \mathcal{O}}^{\rm low}, \tau)$ can be written as an intersection of a rational convex polyhedral cone $\mathscr{C}$ with $\z_{>0} \times \z^N$. 
Then it follows that $\mathscr{C} = C(G/B, \mathcal{L}_\lambda, \nu_{\mathcal{C}, \mathcal{O}}^{\rm low}, \tau)$, which implies part (2). 
Finally, part (1) follows from part (2) by Gordan's lemma (see, for instance, \cite[Proposition 1.2.17]{CLS}). 
This proves the corollary.
\end{proof}

Let $P_{++} \coloneqq \sum_{i \in I} \z_{>0} \varpi_i \subseteq P_+$ be the set of regular dominant integral weights. 
For $\lambda \in P_{++}$, the line bundle $\mathcal{L}_\lambda$ on $G/B$ is very ample (see, for instance, \cite[Section I\hspace{-.1em}I.8.5]{Jan}). 
Hence it follows by \cref{t:NO_dimension_volume} that the real dimension of $\Delta(G/B, \mathcal{L}_\lambda, \nu_{\mathcal{C}, \mathcal{O}}^{\rm low}, \tau)$ coincides with $N$.
We say that $G/B$ admits a \emph{flat degeneration} to a variety $X$ if there exists a flat morphism $\pi \colon \mathfrak{X} \rightarrow {\rm Spec}(\c[t])$ of schemes such that the scheme-theoretic fiber $\pi^{-1}(t)$ (resp., $\pi^{-1}(0)$) over a closed point $t \in \c \setminus \{0\}$ (resp., the origin $0 \in \c$) is isomorphic to $G/B$ (resp., $X$). 
If $X$ is a toric variety, then a flat degeneration to $X$ is called a \emph{toric degeneration}. 
Our main result (\cref{t:main_thm}) allows us to apply Anderson's construction \cite{And} of toric degenerations to $\Delta_{\mathcal{C}, \mathcal{O}}(\Pi_A, \Pi^\ast_A, \lambda)$ as follows. 

\begin{thm}\label{t:toric_deg}
For each partition $\Pi_A \setminus \Pi^\ast_A = \mathcal{C} \sqcup \mathcal{O}$ and $\lambda \in P_{++}$, there exists a flat degeneration of $G/B$ to the irreducible normal projective toric variety corresponding to the marked chain-order polytope $\Delta_{\mathcal{C}, \mathcal{O}}(\Pi_A, \Pi^\ast_A, \lambda)$.
\end{thm}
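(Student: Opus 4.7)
The plan is to apply Anderson's construction of toric degenerations from Newton--Okounkov bodies \cite{And}: if $\mathcal{L}$ is a very ample line bundle on an irreducible projective variety $X$ and $\nu$ is a valuation on $\c(X)$ with one-dimensional leaves such that the value semigroup $S(X, \mathcal{L}, \nu, \tau)$ is finitely generated, then $X$ admits a flat degeneration to the projective toric variety $\mathrm{Proj}(\c[S(X, \mathcal{L}, \nu, \tau)])$; if the semigroup is moreover saturated in the lattice it generates, then this toric variety is normal and coincides with the irreducible normal projective toric variety associated with the Newton--Okounkov body $\Delta(X, \mathcal{L}, \nu, \tau)$.

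With $X = G/B$, $\mathcal{L} = \mathcal{L}_\lambda$, $\nu = \nu^{\rm low}_{\mathcal{C}, \mathcal{O}}$, and a convenient section such as $\tau = \tau^{(\mathcal{C}, \mathcal{O})}_\lambda$, the hypotheses of Anderson's theorem are essentially already in hand. For $\lambda \in P_{++}$ the line bundle $\mathcal{L}_\lambda$ is very ample (\cite[Section II.8.5]{Jan}), so $G/B$ embeds as $Y_{\mathcal{L}_\lambda}$ in $\mathbb{P}(V(\lambda)^\ast)$ and \cref{t:NO_dimension_volume} ensures that $\Delta(G/B, \mathcal{L}_\lambda, \nu^{\rm low}_{\mathcal{C}, \mathcal{O}}, \tau)$ has full real dimension $N$. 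Finite generation and saturation of $S(G/B, \mathcal{L}_\lambda, \nu^{\rm low}_{\mathcal{C}, \mathcal{O}}, \tau)$ are furnished by \cref{c:of_main_result}(1)--(2). Thus Anderson's construction yields a flat degeneration of $G/B$ to the irreducible normal projective toric variety associated with the Newton--Okounkov body.

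It remains to identify this target with the toric variety of $\Delta_{\mathcal{C}, \mathcal{O}}(\Pi_A, \Pi^\ast_A, \lambda)$. Here \cref{t:main_thm} delivers the identification up to translation by an integer vector, and translation of an integral polytope by a lattice vector leaves the associated polarized toric variety unchanged (it corresponds only to tensoring the ample line bundle by a character of the torus). Consequently the flat degeneration lands on the irreducible normal projective toric variety associated with $\Delta_{\mathcal{C}, \mathcal{O}}(\Pi_A, \Pi^\ast_A, \lambda)$ itself, which completes the argument.

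Because \cref{t:main_thm} and \cref{c:of_main_result} have already done the conceptual work, no substantive obstacle remains; the only point requiring care is the bookkeeping between the slightly different conventions in the literature (Anderson works with a fixed ample line bundle and the semigroup of all its tensor powers, while our value semigroup is defined likewise through $\mathcal{L}_\lambda^{\otimes k} = \mathcal{L}_{k\lambda}$), together with the observation that the integer translation appearing in \cref{t:main_thm} is harmless for the isomorphism class of the resulting toric variety.
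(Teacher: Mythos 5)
Your proposal is correct and follows essentially the same route as the paper: Anderson's theorem together with \cref{c:of_main_result} (finite generation and saturation, giving normality via \cite[Theorem 1.3.5]{CLS}) produces the degeneration to the normal toric variety of the Newton--Okounkov body, and \cref{t:main_thm} identifies that body with $\Delta_{\mathcal{C}, \mathcal{O}}(\Pi_A, \Pi^\ast_A, \lambda)$ up to a lattice translation, which is harmless. Your explicit remarks on very ampleness for $\lambda \in P_{++}$ and on the irrelevance of the integer translation are exactly the bookkeeping the paper leaves implicit.
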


\begin{proof}
Let $\c[S(G/B, \mathcal{L}_\lambda, \nu_{\mathcal{C}, \mathcal{O}}^{\rm low}, \tau)]$ denote the semigroup ring of $S(G/B, \mathcal{L}_\lambda, \nu_{\mathcal{C}, \mathcal{O}}^{\rm low}, \tau)$.
By \cite[Theorem 1]{And} and \cref{c:of_main_result} (1), we obtain a flat degeneration of $G/B$ to ${\rm Proj} (\c[S(G/B, \mathcal{L}_\lambda, \nu_{\mathcal{C}, \mathcal{O}}^{\rm low}, \tau)])$, where the $\z_{>0}$-grading of $S(G/B, \mathcal{L}_\lambda, \nu_{\mathcal{C}, \mathcal{O}}^{\rm low}, \tau)$ induces a $\z_{\ge 0}$-grading of $\c[S(G/B, \mathcal{L}_\lambda, \nu_{\mathcal{C}, \mathcal{O}}^{\rm low}, \tau)]$. 
By \cite[Theorem 1.3.5]{CLS} and \cref{c:of_main_result} (1), it follows that ${\rm Proj} (\c[S(G/B, \mathcal{L}_\lambda, \nu_{\mathcal{C}, \mathcal{O}}^{\rm low}, \tau)])$ is normal; hence it is isomorphic to the irreducible normal projective toric variety corresponding to the integral convex polytope $\Delta(G/B, \mathcal{L}_\lambda, \nu_{\mathcal{C}, \mathcal{O}}^{\rm low}, \tau)$. 
From these, we conclude the theorem by \cref{t:main_thm}.
\end{proof}

\subsection{Proof of \cref{t:main_thm}}\label{ss:proof}

Fix $1 \leq k \leq n$, and define 
\[{\bm d}_{\varpi_k} = (d_1^{(1)}, d_1^{(2)}, d_2^{(1)}, d_1^{(3)}, d_2^{(2)}, d_3^{(1)}, \ldots, d_1^{(n)}, d_2^{(n-1)}, \ldots, d_n^{(1)}) \in \{0, 1\}^N\] 
by
\[d_\ell^{(m)} \coloneqq 
\begin{cases} 
1 &\text{if}\ n-k+2 \leq \ell\ \text{and}\ q_\ell^{(m)} \in \mathcal{O},\\ 
0 &\text{otherwise}.
\end{cases}\]
Then we first prove that 
\[\Delta_{\mathcal{C}, \mathcal{O}}(\Pi_A, \Pi^\ast_A, \varpi_k) - {\bm d}_{\varpi_k} \subseteq \Delta(G/B, \mathcal{L}_{\varpi_k}, \nu_{\mathcal{C}, \mathcal{O}}^{\rm low}, \tau)\]
for some nonzero section $\tau \in H^0(G/B, \mathcal{L}_{\varpi_k})$.
By definition, the marked chain-order polytope $\Delta_{\mathcal{C}, \mathcal{O}}(\Pi_A, \Pi^\ast_A, \varpi_k)$ coincides with the set of 
\[(a_1^{(1)}, a_1^{(2)}, a_2^{(1)}, a_1^{(3)}, a_2^{(2)}, a_3^{(1)}, \ldots, a_1^{(n)}, a_2^{(n-1)}, \ldots, a_n^{(1)}) \in \r^N_{\geq 0}\]
satisfying the following conditions: 
\begin{itemize}
\item $a_\ell^{(m)} = 0$ if $\ell + m \leq n-k+1$,
\item $a_\ell^{(m)} = 0$ if $n-k+2 \leq \ell$ and $q_\ell^{(m)} \in \mathcal{C}$,
\item $a_\ell^{(m)} = 1$ if $n-k+2 \leq \ell$ and $q_\ell^{(m)} \in \mathcal{O}$,
\item $(a_\ell^{(m)} \mid 1 \leq \ell \leq n-k+1,\ n-k-\ell+2 \leq m \leq n+1-\ell)$ is contained in the marked chain-order polytope $\Delta_{\mathcal{C}_k, \mathcal{O}_k}(\Pi_k, \Pi^\ast_k, \mu_k)$, where the marked poset $(\Pi_k, \Pi^\ast_k, \mu_k)$ is defined by the marked Hasse diagram given in Figure \ref{type_A_marked_Hasse_Grassmann}, and the partition $\Pi_k \setminus \Pi^\ast_k = \mathcal{C}_k \sqcup \mathcal{O}_k$ is the one induced from $\Pi_A \setminus \Pi^\ast_A = \mathcal{C} \sqcup \mathcal{O}$. 
\end{itemize}
\begin{figure}[!ht]
\begin{center}
   \includegraphics[width=8.0cm,bb=60mm 140mm 150mm 230mm,clip]{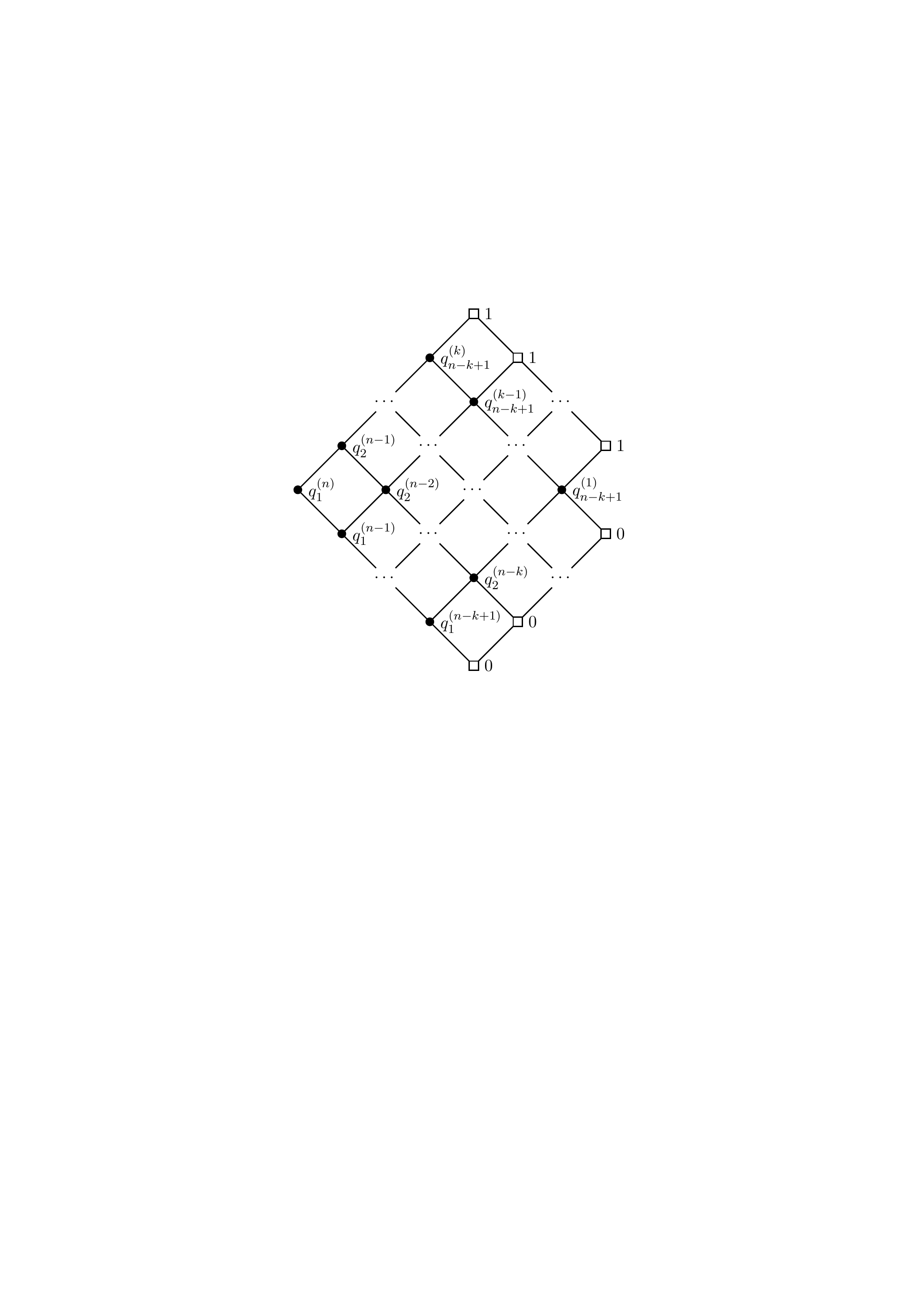}
	\caption{The marked Hasse diagram of the marked poset $(\Pi_k, \Pi^\ast_k, \mu_k)$.}
	\label{type_A_marked_Hasse_Grassmann}
\end{center}
\end{figure}
For $b \in \mathcal{B}(\varpi_k)$, it follows that 
\begin{align*}
\Upsilon_{\mathcal{C}, \mathcal{O}} (G^{\rm up}_{\varpi_k} (b)) &= G^{\rm up}_{\varpi_k} (b) (\Omega_{\mathcal{C}, \mathcal{O}} (t_1, \ldots, t_N) v_{\varpi_k})\\
&= \sum_{{\bm a} = (a_1, \ldots, a_N) \in \z_{\geq 0}^N} \frac{1}{a_1! \cdots a_N!} t_1^{a_1} \cdots t_N^{a_N} G^{\rm up}_{\varpi_k} (b) (\overline{u_1} f_{i_1}^{a_1} \cdots \overline{u_N} f_{i_N}^{a_N} v_{\varpi_k}).
\end{align*}
In addition, we have $G^{\rm up}_{\varpi_k} (b) (\overline{u_1} f_{i_1}^{a_1} \cdots \overline{u_N} f_{i_N}^{a_N} v_{\varpi_k}) \neq 0$ if and only if $u_1 \tilde{f}_{i_1}^{a_1} \cdots u_N \tilde{f}_{i_N}^{a_N} b_{\varpi_k} = b$, where $e b^\prime \coloneqq b^\prime$ for the identity element $e \in W$ and $b^\prime \in \mathcal{B}(\varpi_k)$.
In particular, the following holds. 

\begin{lem}\label{l:values_are_distinct}
The values $\nu_{\mathcal{C}, \mathcal{O}}^{\rm low} (\Upsilon_{\mathcal{C}, \mathcal{O}} (G^{\rm up}_{\varpi_k} (b)))$, $b \in \mathcal{B}(\varpi_k)$, are all distinct. 
\end{lem}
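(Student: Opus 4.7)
The plan is to exploit the explicit power-series expansion already written out for $\Upsilon_{\mathcal{C}, \mathcal{O}}(G^{\rm up}_{\varpi_k}(b))$ together with the minuscule nature of $V(\varpi_k)$. Write
\[
\Upsilon_{\mathcal{C}, \mathcal{O}}(G^{\rm up}_{\varpi_k}(b)) = \sum_{{\bm a} \in \z_{\geq 0}^N} \frac{c_{{\bm a}, b}}{a_1! \cdots a_N!}\, t_1^{a_1} \cdots t_N^{a_N},
\]
where $c_{{\bm a}, b} \coloneqq G^{\rm up}_{\varpi_k}(b)(\overline{u_1} f_{i_1}^{a_1} \cdots \overline{u_N} f_{i_N}^{a_N} v_{\varpi_k})$. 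Since the lowest term valuation $\nu_{\mathcal{C}, \mathcal{O}}^{\rm low}$ picks the lex-smallest exponent vector of a monomial with nonzero coefficient, the whole question reduces to understanding the support $A(b) \coloneqq \{ {\bm a} \in \z_{\geq 0}^N \mid c_{{\bm a}, b} \neq 0\}$.

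The main computation is to show that $c_{{\bm a}, b} \neq 0$ if and only if ${\bm a} \in \{0,1\}^N$ and $u_1 \tilde{f}_{i_1}^{a_1} \cdots u_N \tilde{f}_{i_N}^{a_N} b_{\varpi_k} = b$. Both conditions are forced by the minuscule structure. Any $h_i$-weight on $V(\varpi_k)$ lies in $\{-1, 0, 1\}$, hence $f_{i_k}^{\,2}$ annihilates every vector and every $a_k \geq 2$ kills the expression. Then, starting from $v_{\varpi_k} \in \c^\times G^{\rm low}_{\varpi_k}(b_{\varpi_k})$, the two minuscule identities recalled in \cref{s:crystal_basis}, namely $f_i G^{\rm low}_{\varpi_k}(b') \in \c^\times G^{\rm low}_{\varpi_k}(\tilde{f}_i b')$ or $0$, and $\overline{s}_i G^{\rm low}_{\varpi_k}(b') \in \c^\times G^{\rm low}_{\varpi_k}(s_i b')$, can be applied iteratively from right to left. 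They show that $\overline{u_1} f_{i_1}^{a_1} \cdots \overline{u_N} f_{i_N}^{a_N} v_{\varpi_k}$ is a scalar multiple of $G^{\rm low}_{\varpi_k}(b')$ with $b' = u_1 \tilde{f}_{i_1}^{a_1} \cdots u_N \tilde{f}_{i_N}^{a_N} b_{\varpi_k}$, and is zero whenever that crystal expression vanishes. Pairing with the dual basis vector $G^{\rm up}_{\varpi_k}(b)$ is then nonzero precisely when $b' = b$.

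With this characterization in hand, the supports $A(b)$ for distinct $b \in \mathcal{B}(\varpi_k)$ are pairwise disjoint subsets of $\{0,1\}^N$, simply because any fixed ${\bm a}$ determines exactly one possible crystal element $u_1 \tilde{f}_{i_1}^{a_1} \cdots u_N \tilde{f}_{i_N}^{a_N} b_{\varpi_k}$. Moreover each $A(b)$ is nonempty: since $G^{\rm up}_{\varpi_k}(b) \neq 0$ and $\widehat{\Omega}_{\mathcal{C}, \mathcal{O}}$ is birational onto $G/B$, its pullback $\Upsilon_{\mathcal{C}, \mathcal{O}}(G^{\rm up}_{\varpi_k}(b))$ is a nonzero polynomial in $t_1, \ldots, t_N$. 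Hence $\nu_{\mathcal{C}, \mathcal{O}}^{\rm low}(\Upsilon_{\mathcal{C}, \mathcal{O}}(G^{\rm up}_{\varpi_k}(b)))$ is the lexicographic minimum of the nonempty set $A(b)$, and these minima are distinct for distinct $b$ because the sets themselves are disjoint.

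The only genuinely delicate step is the minuscule reduction in the second paragraph — tracking the scalar factors through the alternating application of $f_{i_k}^{a_k}$ and $\overline{u_k}$ and verifying that one stays within a single line spanned by a lower global basis vector at every intermediate stage. Once that bookkeeping is established, disjointness and hence distinctness of the valuations follows formally from the identity $b = u_1 \tilde{f}_{i_1}^{a_1} \cdots u_N \tilde{f}_{i_N}^{a_N} b_{\varpi_k}$.
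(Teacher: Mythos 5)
Your proof is correct and takes essentially the same route as the paper: the paper deduces the lemma from the same power-series expansion and the same characterization of the nonvanishing coefficients (the coefficient of $t_1^{a_1}\cdots t_N^{a_N}$ is nonzero if and only if $u_1 \tilde{f}_{i_1}^{a_1} \cdots u_N \tilde{f}_{i_N}^{a_N} b_{\varpi_k} = b$), which relies on exactly the minuscule facts from Section \ref{s:crystal_basis} that you spell out. Your disjoint-support argument and the nonvanishing of $\Upsilon_{\mathcal{C}, \mathcal{O}} (G^{\rm up}_{\varpi_k} (b))$ are precisely the details the paper leaves implicit in its ``in particular''.
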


Fix $b \in \mathcal{B}(\varpi_k)$, and write $\nu_{\mathcal{C}, \mathcal{O}}^{\rm low} (\Upsilon_{\mathcal{C}, \mathcal{O}} (G^{\rm up}_{\varpi_k} (b))) = (a_1, \ldots, a_N)$.
Let us prove that $(a_1, \ldots, a_N) \in \Delta_{\mathcal{C}, \mathcal{O}}(\Pi_A, \Pi^\ast_A, \varpi_k) -{\bm d}_{\varpi_k}$. 
Since we have $G^{\rm up}_{\varpi_k} (b) (\overline{u_1} f_{i_1}^{a_1} \cdots \overline{u_N} f_{i_N}^{a_N} v_{\varpi_k}) \neq 0$, it follows that 
\[u_1 \tilde{f}_{i_1}^{a_1} \cdots u_N \tilde{f}_{i_N}^{a_N} b_{\varpi_k} = b,\] 
and hence that 
\[b_{\varpi_k} = \tilde{e}_{i_N}^{a_N} u_N \cdots \tilde{e}_{i_1}^{a_1} u_1 b.\]
For $0 \leq \ell \leq N$, define $b^{(\ell)} \in \mathcal{B}(\varpi_k)$ by 
\[b^{(\ell)} \coloneqq \tilde{e}_{i_\ell}^{a_\ell} u_\ell \cdots \tilde{e}_{i_1}^{a_1} u_1 b.\] 
In particular, we have $b^{(0)} = b$ and $b^{(N)} = b_{\varpi_k}$.
Let us write 
\[b^{(\ell)} = (j_1^{(\ell)}, j_2^{(\ell)}, \ldots, j_k^{(\ell)})\]
for $0 \leq \ell \leq N$ as in Section \ref{s:crystal_basis}. 
Recalling the arrangement \eqref{eq:arrangement_of_poset} of the elements of $\Pi_A \setminus \Pi^\ast_A$, define $1 \leq z_\ell \leq n +1 -i_\ell$ for $1 \leq \ell \leq N$ by the condition that $q_\ell = q_{z_\ell}^{(i_\ell)}$.  
Then we write $a_{z_\ell}^{(i_\ell)} \coloneqq a_\ell$, $b_{z_\ell}^{(i_\ell)} \coloneqq b^{(\ell)}$, and $u_{z_\ell}^{(i_\ell)} \coloneqq u_\ell$ for $1 \leq \ell \leq N$.
Set $N_m \coloneqq \frac{m(m+1)}{2}$ for $0 \leq m \leq n$. 
In particular, we have $N_n = N$.

\begin{lem}\label{l:partial_form_of_tableau_induction}
For $n-k+1 \leq m \leq n$ and $1 \leq q \leq m - (n-k)$, the equality $j_q^{(N_m)} = q$ holds.
\end{lem}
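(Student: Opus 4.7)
The plan is to prove the lemma by downward induction on $m$, starting from $m = n$ and descending to $m = n-k+1$; the statement is vacuous for smaller $m$. The base case $m=n$ is immediate: since $N_n = N$, we have $b^{(N_n)} = b_{\varpi_k} = (1, 2, \ldots, k)$, so $j_q^{(N_n)} = q$ for all $1 \leq q \leq k = n - (n-k)$.

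For the inductive step, I assume $j_q^{(N_m)} = q$ for $1 \leq q \leq r$, where $r \coloneqq m - (n-k)$, and aim to deduce $j_q^{(N_{m-1})} = q$ for $1 \leq q \leq r-1$. The strategy is to trace the sequence backward through block $m$ using the inverted relation $b^{(\ell-1)} = u_\ell^{-1} \tilde{f}_{i_\ell}^{a_\ell} b^{(\ell)}$. The key feature to exploit is that block $m$ consists of positions $N_{m-1}+1, \ldots, N_m$ whose indices are $m, m-1, \ldots, 1$ in forward order, hence $1, 2, \ldots, m$ when read in reverse.

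The argument rests on two combinatorial facts about $\mathcal{B}(\varpi_k)$, both read off directly from the formulas for $\tilde{e}_i, \tilde{f}_i, s_i$ recalled in \cref{s:crystal_basis}. First, if $b' \in \mathcal{B}(\varpi_k)$ has its first $s$ entries equal to $(1, 2, \ldots, s)$ and $i \leq s-1$, then $\tilde{f}_i b' = \tilde{e}_i b' = 0$ (because the entry $j_{i+1}' = i+1$ blocks both Kashiwara operators) and $s_i b' = b'$ (swapping the values $i$ and $i+1$ among the entries and reordering has no net effect). Second, if $b'$ has first $s-1$ entries $(1, 2, \ldots, s-1)$ and $i \geq s$, then $\tilde{f}_i b'$, $\tilde{e}_i b'$, and $s_i b'$, when nonzero, retain those first $s-1$ entries, because these operations touch only entries in $\{i, i+1\} \subseteq \{s, s+1, \ldots, n+1\}$.

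Combining these, the reverse steps at indices $i = 1, 2, \ldots, r-1$ each force $a_\ell = 0$ (by the first observation, since $\tilde{f}_i c = 0$ for the intermediate tableau $c$) and preserve the first $r$ entries $(1, 2, \ldots, r)$ intact. At the reverse step with $i = r$ the $r$-th entry may flip between $r$ and $r+1$, the exact behavior depending on whether $j_{r+1}^{(N_m)} = r+1$ or $j_{r+1}^{(N_m)} > r+1$ and on whether $u_\ell = e$ or $s_r$, but the first $r-1$ entries stay fixed in every case. The remaining reverse steps at indices $r+1, \ldots, m$ fall under the second observation and likewise leave the first $r-1$ entries alone. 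Hence $b^{(N_{m-1})}$ begins with $(1, 2, \ldots, r-1)$, completing the induction. The only subtle point is the case analysis at the $r$-th reverse step, where one must check directly that each admissible pair $(a_\ell, u_\ell)$ preserves the first $r-1$ entries; the rest of the argument is routine once the two observations are in place.
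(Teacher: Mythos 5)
Your proof is correct: the backward relation $b^{(\ell-1)} = u_\ell \tilde{f}_{i_\ell}^{a_\ell} b^{(\ell)}$ is valid, your first observation forces $a_\ell = 0$ and fixes the tableau at the reverse steps of index $i \leq r-1$, and every admissible move at index $i \geq r$ only touches entries with values in $\{i, i+1\} \subseteq \{r, r+1, \ldots\}$, so the prefix $(1, \ldots, r-1)$ indeed survives to $b^{(N_{m-1})}$. This is essentially the paper's own argument in contrapositive form: the paper assumes $j_{m-(n-k)}^{(N_m)} > m-(n-k)$ and pushes this failure forward through the next block with the $\tilde{e}_i$'s and $u_\ell$'s until it contradicts $b^{(N)} = b_{\varpi_k}$, the same block-by-block induction on $m$ resting on the same locality fact that index-$i$ crystal operators affect only the entries $i$ and $i+1$.
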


\begin{proof}
Since we have $1 \leq j_1^{(N_m)} < \cdots < j_k^{(N_m)}$, it suffices to show that $j_{m - (n-k)}^{(N_m)} \leq m - (n-k)$ for all $n-k+1 \leq m \leq n$. 
Assume for a contradiction that 
\begin{equation}\label{eq:partial_form_tableau_1}
\begin{aligned}
j_{m - (n-k)}^{(N_m)} > m - (n-k)
\end{aligned}
\end{equation} 
for some $n-k+1 \leq m \leq n$. 
If $m = n$, then it follows by \eqref{eq:partial_form_tableau_1} that 
\begin{equation}\label{eq:partial_form_tableau_contradiction}
\begin{aligned}
j_k^{(N)} = j_{n - (n-k)}^{(N_n)} > n - (n-k) = k,
\end{aligned}
\end{equation} 
which contradicts to the equality $b^{(N)} = b_{\varpi_k}$. 
Hence we may assume that $m < n$. 
Then, since we have 
\[b^{(N_m + n - k +1)} = \tilde{e}_{i_{N_m + n - k +1}}^{a_{N_m + n - k +1}} u_{N_m + n - k +1} \cdots \tilde{e}_{i_{N_m + 1}}^{a_{N_m + 1}} u_{N_m + 1} b^{(N_m)}\] 
and 
\[(i_{N_m + n - k +1}, i_{N_m + n - k}, \ldots, i_{N_m + 1}) = (m - (n-k) +1, m - (n-k) +2, \ldots, m + 1),\] 
it follows by \eqref{eq:partial_form_tableau_1} that $j_{m - (n-k)}^{(N_m + n - k +1)} > m - (n-k)$, and hence that 
\begin{equation}\label{eq:partial_form_tableau_2}
\begin{aligned}
j_{m - (n-k) + 1}^{(N_m + n - k +1)} > m - (n-k) + 1.
\end{aligned}
\end{equation}  
Since we have 
\[b^{(N_{m+1})} = \tilde{e}_{i_{N_{m+1}}}^{a_{N_{m+1}}} u_{N_{m+1}} \cdots \tilde{e}_{i_{N_m + n - k +2}}^{a_{N_m + n - k +2}} u_{N_m + n - k +2} b^{(N_m + n - k +1)}\] 
and 
\[(i_{N_{m+1}}, i_{N_{m+1} - 1}, \ldots, i_{N_m + n - k +2}) = (1, 2, \ldots, m - (n-k)),\] 
it holds by \eqref{eq:partial_form_tableau_2} that $j_{m - (n-k) + 1}^{(N_{m+1})} > m - (n-k) + 1$. 
Repeating this argument, we obtain \eqref{eq:partial_form_tableau_contradiction} which gives a contradiction. 
This proves the lemma. 
\end{proof}

Let us define $(\hat{a}_1, \hat{a}_2, \ldots, \hat{a}_N) \in \{0, 1\}^N$ as follows. 
We first set $\hat{a}_\ell \coloneqq 0$ for all $1 \leq \ell \leq N_{n-k}$. 
Then, assuming that $(\hat{a}_1, \hat{a}_2, \ldots, \hat{a}_{N_{m-1}})$ is defined for some $n-k+1 \leq m \leq n$, let us determine $(\hat{a}_{N_{m-1}+1}, \hat{a}_{N_{m-1}+2}, \ldots, \hat{a}_{N_m})$. 
To do that, we define $\hat{b}^{(\ell)} \in \mathcal{B}(\varpi_k)$ for $0 \leq \ell \leq N$ by
\[\hat{b}^{(\ell)} \coloneqq \tilde{e}_{i_\ell}^{\hat{a}_{\ell}} u_\ell \cdots \tilde{e}_{i_1}^{\hat{a}_1} u_1 b\]
under the assumption that $\hat{a}_1, \ldots, \hat{a}_\ell$ are defined, and write 
\begin{align*}
&\hat{b}^{(\ell)} = (\hat{j}_1^{(\ell)}, \hat{j}_2^{(\ell)}, \ldots, \hat{j}_k^{(\ell)}).
\end{align*}
Then let us set
\begin{itemize}
\item $\hat{a}_{N_{m-1}+\ell} = 0$ for $1 \leq \ell \leq m-\gamma_m +1$ such that $q_{N_{m-1}+\ell} \in \mathcal{O}$,
\item $\hat{a}_{N_{m-1}+\ell} = 1$ for $m-\gamma_m +2 \leq \ell \leq n-k+1$ such that $q_{N_{m-1}+\ell} \in \mathcal{O}$,
\item $\hat{a}_{N_{m-1}+\ell} = 0$ for $n-k+2 \leq \ell \leq m$ such that $q_{N_{m-1}+\ell} \in \mathcal{O}$,
\item $\hat{a}_{N_{m-1}+\ell} = 0$ for $1 \leq \ell \leq m$ such that $\ell \neq m-\gamma_m +1$ and $q_{N_{m-1}+\ell} \in \mathcal{C}$,
\item $\hat{a}_{N_{m-1} + m-\gamma_m +1} = 0$ if $\hat{j}_{m - (n-k) +1}^{(N_{m-1} + m-\gamma_m)} = \gamma_m + 1$ and $q_{N_{m-1} +m -\gamma_m +1} \in \mathcal{C}$,
\item $\hat{a}_{N_{m-1} + m-\gamma_m +1} = 1$ if $\hat{j}_{m - (n-k) +1}^{(N_{m-1} + m-\gamma_m)} \neq \gamma_m + 1$ and $q_{N_{m-1} +m -\gamma_m +1} \in \mathcal{C}$,
\end{itemize}
where $\gamma_m \coloneqq \hat{j}_{m - (n-k)}^{(N_{m-1})}$.

\begin{lem}\label{l:explicit_computation_of_values_for_minors}
The equality $a_\ell = \hat{a}_\ell$ holds for all $1 \leq \ell \leq N$.
\end{lem}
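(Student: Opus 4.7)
My plan is to exploit the minuscule structure of $V(\varpi_k)$ to recast \cref{l:explicit_computation_of_values_for_minors} as a greedy combinatorial statement on $\mathcal B(\varpi_k)$, and then induct on the block index $m$ of the decomposition of $\bm i_A$ into blocks of sizes $1,2,\ldots,n$. First I would note that $\varphi_i,\varepsilon_i\in\{0,1\}$ on $\mathcal B(\varpi_k)$ forces $f_i^2$ to annihilate every lower global basis element, so $\exp(t_\ell f_{i_\ell})=1+t_\ell f_{i_\ell}$; hence the expansion of $\Upsilon_{\mathcal C,\mathcal O}(G^{\rm up}_{\varpi_k}(b))$ displayed above the lemma is supported on $\{0,1\}^N$, with nonzero coefficient at $(a_1,\ldots,a_N)$ exactly when $u_1\tilde f_{i_1}^{a_1}\cdots u_N\tilde f_{i_N}^{a_N}b_{\varpi_k}=b$, equivalently $b^{(N)}=b_{\varpi_k}$. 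Thus $(a_1,\ldots,a_N)$ is characterized as the lex-smallest element of $\{0,1\}^N$ making $b^{(N)}=b_{\varpi_k}$, and it suffices to prove $(\hat a_1,\ldots,\hat a_N)$ is this lex-smallest tuple.

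I would then induct on $m$, with hypothesis $\hat a_\ell=a_\ell$ for all $\ell\le N_{m-1}$ (so $\hat b^{(N_{m-1})}=b^{(N_{m-1})}$) and \cref{l:partial_form_of_tableau_induction} at level $m-1$ giving that the first $m-1-(n-k)$ entries of this common tableau are $1,\ldots,m-1-(n-k)$. For $m\le n-k$ the construction sets $\hat a_\ell=0$; I would justify this by exhibiting $\hat a$ itself as a valid completion on the later blocks, together with the observation that $\tilde e_i$ and $s_i$ for $i\le n-k$ can only permute entries of value $\le n-k+1$ in the tail without ever modifying an initial position, so lex-minimality forces $a_\ell=0=\hat a_\ell$ throughout these initial blocks. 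For $m\ge n-k+1$, \cref{l:partial_form_of_tableau_induction} requires the $(m-(n-k))$-th entry to drop from $\gamma_m$ to $m-(n-k)$ by the end of the block; only the positions $\ell\in[m-\gamma_m+2,n-k+1]$ carry $\tilde e$-indices in $\{m-(n-k),\ldots,\gamma_m-1\}$, each capable of decreasing the pivot by one, and the interval count $\gamma_m-(m-(n-k))$ matches the number of needed decreases exactly. Combined with lex-minimality this validates rules (a)--(c) for $\mathcal O$-positions and rule (d) for $\mathcal C$-positions outside the critical slot, since any nonzero choice at a $\mathcal C$-position $\ell<m-\gamma_m+1$ uses $s_{i_\ell}$ for $i_\ell>\gamma_m$ (which merely permutes tail entries) and would trigger no usable decrease.

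The main obstacle is the critical position $\ell=m-\gamma_m+1$ with $q_\ell\in\mathcal C$, where the $\tilde e$-index equals $\gamma_m$ and $s_{\gamma_m}$ can either be inert or raise the pivot. I would split into cases on whether $\hat j_{m-(n-k)+1}^{(N_{m-1}+m-\gamma_m)}=\gamma_m+1$: in the first case the adjacent entries $\gamma_m,\gamma_m+1$ swap under $s_{\gamma_m}$ but are restored by the ascending sort, so $s_{\gamma_m}$ acts as the identity, while $\tilde e_{\gamma_m}$ is blocked by the preceding entry $\gamma_m$, giving $\hat a=0$ as in rule (e); in the second case $\gamma_m+1$ is absent, so $s_{\gamma_m}$ raises the pivot to $\gamma_m+1$, which must be lowered back to $\gamma_m$ by $\tilde e_{\gamma_m}$, since otherwise the subsequent $\tilde e_{\gamma_m-1},\ldots,\tilde e_{m-(n-k)}$ find no $\gamma_m$ to act on and the block fails to reach the target, forcing $\hat a=1$ as in rule (f). Verifying that the resulting tableau satisfies \cref{l:partial_form_of_tableau_induction} at level $m$ then propagates the induction hypothesis and closes the step. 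The delicate part throughout is tracking the induced permutation of tail entries past position $m-(n-k)$ under the $s_{i_\ell}$'s across a block, and confirming that these side effects neither invalidate the later choices nor create a lex-smaller valid tuple via the positions in $[n-k+2,m]$.
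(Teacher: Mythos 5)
Your proposal is correct and takes essentially the same route as the paper: you characterize $(a_1,\ldots,a_N)$ via the minuscule/crystal expansion of $\Upsilon_{\mathcal{C},\mathcal{O}}(G^{\rm up}_{\varpi_k}(b))$ as the lexicographically smallest $0$--$1$ tuple with $b^{(N)}=b_{\varpi_k}$, and then argue block by block that the greedy tuple $(\hat{a}_1,\ldots,\hat{a}_N)$ is both valid and lex-minimal, with \cref{l:partial_form_of_tableau_induction} supplying the forcing constraint at the end of each block. Your pivot-counting over the positions $\ell\in[m-\gamma_m+2,\,n-k+1]$ and the case analysis at the critical slot $\ell=m-\gamma_m+1$ simply make explicit the steps the paper compresses into ``by the definition of $(\hat{a}_{N_{m-1}+1},\ldots,\hat{a}_{N_m})$'' and ``it is easy to see,'' so the two arguments coincide in substance.
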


\begin{proof}
Since we have 
\[\hat{b}^{(N_{n-k+1})} = \tilde{e}_{i_{N_{n-k+1}}}^{\hat{a}_{N_{n-k+1}}} u_{N_{n-k+1}} \cdots \tilde{e}_{i_{N_{n-k}+1}}^{\hat{a}_{N_{n-k}+1}} u_{N_{n-k}+1} \hat{b}^{(N_{n-k})},\] 
it follows by the definition of $(\hat{a}_{N_{n-k}+1}, \hat{a}_{N_{n-k}+2}, \ldots, \hat{a}_{N_{n-k+1}})$ that $\hat{j}_1^{(N_{n-k+1})} = 1$. 
Then, since it holds that 
\[\hat{b}^{(N_{n-k+2})} = \tilde{e}_{i_{N_{n-k+2}}}^{\hat{a}_{N_{n-k+2}}} u_{N_{n-k+2}} \cdots \tilde{e}_{i_{N_{n-k+1}+1}}^{\hat{a}_{N_{n-k+1}+1}} u_{N_{n-k+1}+1} \hat{b}^{(N_{n-k+1})},\] 
we see by the definition of $(\hat{a}_{N_{n-k+1}+1}, \hat{a}_{N_{n-k+1}+2}, \ldots, \hat{a}_{N_{n-k+2}})$ that $\hat{j}_1^{(N_{n-k+2})} = 1$ and $\hat{j}_2^{(N_{n-k+2})} = 2$.
Continuing in this way, we deduce that 
\[\hat{b}^{(N)} = \tilde{e}_{i_N}^{\hat{a}_N} u_N \cdots \tilde{e}_{i_1}^{\hat{a}_1} u_1 b = (1, 2, \ldots, k) = b_{\varpi_k}.\]
Hence the definition of $\nu_{\mathcal{C}, \mathcal{O}}^{\rm low} (\Upsilon_{\mathcal{C}, \mathcal{O}} (G^{\rm up}_{\varpi_k} (b))) = (a_1, \ldots, a_N)$ implies that 
\[(a_1, a_2, \ldots, a_N) \leq (\hat{a}_1, \hat{a}_2, \ldots, \hat{a}_N)\] 
with respect to the lexicographic order. 
In particular, we have $a_\ell = 0$ for all $1 \leq \ell \leq N_{n-k}$, which implies that 
\[b^{(N_{n-k})} = u_{N_{n-k}} \cdots u_1 b = \hat{b}^{(N_{n-k})}\]
and that 
\[(a_{N_{n-k}+1}, a_{N_{n-k}+2}, \ldots, a_{N_{n-k+1}}) \leq (\hat{a}_{N_{n-k}+1}, \hat{a}_{N_{n-k}+2}, \ldots, \hat{a}_{N_{n-k+1}}).\]
If $(a_{N_{n-k}+1}, a_{N_{n-k}+2}, \ldots, a_{N_{n-k+1}}) < (\hat{a}_{N_{n-k}+1}, \hat{a}_{N_{n-k}+2}, \ldots, \hat{a}_{N_{n-k+1}})$, then it is easy to see that $j_1^{(N_{n-k+1})} \neq 1$ by the definition of $(\hat{a}_{N_{n-k}+1}, \hat{a}_{N_{n-k}+2}, \ldots, \hat{a}_{N_{n-k+1}})$ since we have 
\begin{align*}
(j_1^{(N_{n-k+1})}, j_2^{(N_{n-k+1})}, \ldots, j_k^{(N_{n-k+1})}) &= b^{(N_{n-k+1})}\\ 
&= \tilde{e}_{i_{N_{n-k+1}}}^{a_{N_{n-k+1}}} u_{N_{n-k+1}} \cdots \tilde{e}_{i_{N_{n-k}+1}}^{a_{N_{n-k}+1}} u_{N_{n-k}+1} b^{(N_{n-k})}\\
&= \tilde{e}_{i_{N_{n-k+1}}}^{a_{N_{n-k+1}}} u_{N_{n-k+1}} \cdots \tilde{e}_{i_{N_{n-k}+1}}^{a_{N_{n-k}+1}} u_{N_{n-k}+1} \hat{b}^{(N_{n-k})}.
\end{align*}
This gives a contradiction since we have $j_1^{(N_{n-k+1})} = 1$ by \cref{l:partial_form_of_tableau_induction}. 
Hence it follows that 
\[(a_{N_{n-k}+1}, a_{N_{n-k}+2}, \ldots, a_{N_{n-k+1}}) = (\hat{a}_{N_{n-k}+1}, \hat{a}_{N_{n-k}+2}, \ldots, \hat{a}_{N_{n-k+1}}).\] 
Repeating this argument, we conclude the assertion of the lemma.
\end{proof}

\begin{lem}\label{l:forms_of_1_in_chain_coordinates}
It holds that $\gamma_{m+1} > \gamma_m$ for all $n-k+1 \leq m \leq n-1$. 
In addition, if $\gamma_{m+1} = \gamma_m +1$ for some $n-k+1 \leq m \leq n-1$, then $\hat{a}_{N_{m-1} +m -\gamma_m +1} = 0$.
\end{lem}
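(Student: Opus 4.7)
The plan is to trace the tableau $\hat{b}^{(N_{m-1}+\ell)}$ through block $m$ (i.e., as $\ell$ runs from $0$ to $m$) and split this evolution into four phases determined by the relation between $i_{N_{m-1}+\ell}=m-\ell+1$ and $\gamma_m$. The elementary facts about the minuscule crystal $\mathcal{B}(\varpi_k)$ that I would use repeatedly are: both $\tilde{e}_i$ and $s_i$ swap a configuration where $i$ is absent and $i+1$ is present to one where $i$ is present and $i+1$ is absent; $s_i$ is the identity when both $i$ and $i+1$ are present; and the composite $\tilde{e}_i s_i$ is the identity when $i$ is present and $i+1$ is absent.

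In Phase~1, $\ell = 1, \ldots, m-\gamma_m$, the construction forces $\hat{a}_{N_{m-1}+\ell}=0$ and $i \geq \gamma_m+1$, so only $u$ acts; since the values $1, \ldots, m-1-(n-k), \gamma_m$ occupying positions $\leq m-(n-k)$ are all strictly less than $i$, these positions are untouched throughout the phase. Phase~2 is the special step $\ell = m-\gamma_m+1$ with $i = \gamma_m$, which I propose to analyze by a four-way check over $(\mathcal{O}\text{ vs.\ }\mathcal{C}) \times (\gamma_m+1\text{ absent vs.\ present})$; using the facts above, in every branch the operator prescribed by the construction acts as the identity on the tableau, the delicate branch being $q\in\mathcal{C}$ with $\gamma_m+1$ absent, where the operator is $\tilde{e}_{\gamma_m} s_{\gamma_m}$. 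Observe also that $\gamma_m+1$ is present at this moment if and only if $\hat{j}_{m-(n-k)+1}^{(N_{m-1}+m-\gamma_m)} = \gamma_m+1$, since the smallest value exceeding $\gamma_m$ can only sit at position $m-(n-k)+1$.

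Phase~3 ($\ell = m-\gamma_m+2, \ldots, n-k+1$), nontrivial only when $\gamma_m > m-(n-k)$, I would handle by an induction on $\ell$ showing that the configuration ($i$ absent, $i+1$ present) holds at the start of each step; then both the $\mathcal{O}$-prescription $\tilde{e}_i$ and the $\mathcal{C}$-prescription $s_i$ produce the identical swap, and the entry at position $m-(n-k)$ cascades from $\gamma_m$ down to $m-(n-k)$ while positions $> m-(n-k)$ remain untouched. Phase~4 ($\ell = n-k+2, \ldots, m$) has $i \leq m-1-(n-k)$ and both of $i, i+1$ then present at positions $\leq m-(n-k)$, so the $\hat{a}=0$ operator $u$ is the identity.

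Assembling the four phases, the multiset of entries at positions $> m-(n-k)$ in $\hat{b}^{(N_m)}$ coincides with the one already in place at the end of Phase~1, which is obtained from $\{\hat{j}_{m-(n-k)+1}^{(N_{m-1})}, \ldots, \hat{j}_k^{(N_{m-1})}\}$ by swaps $s_i$ with $i \geq \gamma_m+1$; each such swap preserves inclusion in $\{\gamma_m+1, \ldots, n+1\}$, so $\gamma_{m+1} \geq \gamma_m + 1$, proving the first assertion. For the second assertion, $\gamma_{m+1} = \gamma_m+1$ forces $\gamma_m+1$ into this multiset, hence $\hat{j}_{m-(n-k)+1}^{(N_{m-1}+m-\gamma_m)} = \gamma_m+1$, and the Phase~2 case analysis then yields $\hat{a}_{N_{m-1}+m-\gamma_m+1}=0$ irrespective of whether the element is in $\mathcal{O}$ or $\mathcal{C}$. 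The main obstacle I expect is the Phase~2 verification in the $\mathcal{C}$-branch with $\hat{a}=1$: one must check that $\tilde{e}_{\gamma_m} s_{\gamma_m}$ really restores the tableau, and it is precisely this cancellation that makes the prescription of $\hat{a}$ in \cref{l:explicit_computation_of_values_for_minors} internally consistent.
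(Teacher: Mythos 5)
Your proposal is correct and takes essentially the same route as the paper: the paper's terse chain $\gamma_m=\hat{j}_{m-(n-k)}^{(N_{m-1})}=\hat{j}_{m-(n-k)}^{(N_{m-1}+m-\gamma_m)}<\hat{j}_{m-(n-k)+1}^{(N_{m-1}+m-\gamma_m)}=\hat{j}_{m-(n-k)+1}^{(N_m)}=\gamma_{m+1}$ is exactly what your Phase~1 (indices $\geq\gamma_m+1$ do not disturb positions $\leq m-(n-k)$) and Phases~2--4 (the index-$\gamma_m$ step acts trivially by the prescribed choice of $\hat{a}$, and indices $<\gamma_m$ do not disturb the entries $>\gamma_m$) justify in detail, and your deduction of the second assertion from $\hat{j}_{m-(n-k)+1}^{(N_{m-1}+m-\gamma_m)}=\gamma_m+1$ via the defining bullets is the paper's. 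Your write-up merely makes explicit the crystal-operator verifications (in particular the $\mathcal{C}$-branch cancellation $\tilde{e}_{\gamma_m}s_{\gamma_m}=\mathrm{id}$) that the paper leaves implicit.
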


\begin{proof}
Since we have
\[(i_{N_{m-1} + 1}, i_{N_{m-1} + 2}, \ldots, i_{N_m}) = (m, m-1, \ldots, 1),\]
it follows that
\[\gamma_m = \hat{j}_{m - (n-k)}^{(N_{m-1})} = \hat{j}_{m - (n-k)}^{(N_{m-1} + m -\gamma_m)} < \hat{j}_{m - (n-k) +1}^{(N_{m-1} + m -\gamma_m)} = \hat{j}_{m - (n-k) +1}^{(N_m)} = \gamma_{m+1}.\]
In addition, if $\gamma_{m+1} = \gamma_m +1$, then we have $\hat{j}_{m - (n-k) +1}^{(N_{m-1} + m -\gamma_m)} = \gamma_m + 1$, which implies that $\hat{a}_{N_{m-1} +m -\gamma_m +1} = 0$ by definition.
This proves the lemma.
\end{proof}

By \cref{l:forms_of_1_in_chain_coordinates} and the definition of $(\hat{a}_1, \hat{a}_2, \ldots, \hat{a}_N)$, we deduce that
\[(\hat{a}_1, \hat{a}_2, \ldots, \hat{a}_N) \in \Delta_{\mathcal{C}, \mathcal{O}}(\Pi_A, \Pi^\ast_A, \varpi_k)-{\bm d}_{\varpi_k},\]
which implies by \cref{l:explicit_computation_of_values_for_minors} that 
\[\nu_{\mathcal{C}, \mathcal{O}}^{\rm low} (\Upsilon_{\mathcal{C}, \mathcal{O}} (G^{\rm up}_{\varpi_k} (b))) = (a_1, a_2, \ldots, a_N) \in \Delta_{\mathcal{C}, \mathcal{O}}(\Pi_A, \Pi^\ast_A, \varpi_k)-{\bm d}_{\varpi_k}.\]
Since the number of lattice points in $\Delta_{\mathcal{C}, \mathcal{O}}(\Pi_A, \Pi^\ast_A, \varpi_k)$ coincides with the cardinality of $\mathcal{B}(\varpi_k)$ by \eqref{eq:marked_chain_order_number_of_lattice_points}, it follows by \cref{l:values_are_distinct} that 
\[(\Delta_{\mathcal{C}, \mathcal{O}}(\Pi_A, \Pi^\ast_A, \varpi_k)-{\bm d}_{\varpi_k}) \cap \z^N = \{\nu_{\mathcal{C}, \mathcal{O}}^{\rm low} (\Upsilon_{\mathcal{C}, \mathcal{O}} (G^{\rm up}_{\varpi_k} (b))) \mid b \in \mathcal{B}(\varpi_k)\},\]
and hence that 
\begin{align*}
\Delta_{\mathcal{C}, \mathcal{O}}(\Pi_A, \Pi^\ast_A, \varpi_k)-{\bm d}_{\varpi_k} &= {\rm Conv}((\Delta_{\mathcal{C}, \mathcal{O}}(\Pi_A, \Pi^\ast_A, \varpi_k)-{\bm d}_{\varpi_k}) \cap \z^N)\\
&= {\rm Conv}(\{\nu_{\mathcal{C}, \mathcal{O}}^{\rm low} (\Upsilon_{\mathcal{C}, \mathcal{O}} (G^{\rm up}_{\varpi_k} (b))) \mid b \in \mathcal{B}(\varpi_k)\})\\
&\subseteq \Delta(G/B, \mathcal{L}_{\varpi_k}, \nu_{\mathcal{C}, \mathcal{O}}^{\rm low}, \tau^{(\mathcal{C}, \mathcal{O})}_{\varpi_k})\quad (\text{by Lemma}\ \ref{l:reduction_to_action_on_highest}).
\end{align*}
For $\lambda = \lambda_1 \varpi_1 + \cdots + \lambda_n \varpi_n \in P_+$, the definition of $\tau^{(\mathcal{C}, \mathcal{O})}_{\lambda}$ implies that 
\begin{align*}
&\tau^{(\mathcal{C}, \mathcal{O})}_{\lambda} = (\tau^{(\mathcal{C}, \mathcal{O})}_{\varpi_1})^{\lambda_1} \cdots (\tau^{(\mathcal{C}, \mathcal{O})}_{\varpi_n})^{\lambda_n}.
\end{align*}
Writing 
\[{\bm d}_{\lambda} \coloneqq \lambda_1 {\bm d}_{\varpi_1} + \cdots + \lambda_n {\bm d}_{\varpi_n},\] 
it follows by \cref{t:decomposition_property_marked_poset} that 
\begin{align*}
\Delta_{\mathcal{C}, \mathcal{O}}(\Pi_A, \Pi^\ast_A, \lambda)-{\bm d}_{\lambda} &= \lambda_1 (\Delta_{\mathcal{C}, \mathcal{O}}(\Pi_A, \Pi^\ast_A, \varpi_1)-{\bm d}_{\varpi_1}) + \cdots + \lambda_n (\Delta_{\mathcal{C}, \mathcal{O}}(\Pi_A, \Pi^\ast_A, \varpi_n)-{\bm d}_{\varpi_n})\\
&\subseteq \lambda_1 \Delta(G/B, \mathcal{L}_{\varpi_1}, \nu_{\mathcal{C}, \mathcal{O}}^{\rm low}, \tau^{(\mathcal{C}, \mathcal{O})}_{\varpi_1}) + \cdots + \lambda_n \Delta(G/B, \mathcal{L}_{\varpi_n}, \nu_{\mathcal{C}, \mathcal{O}}^{\rm low}, \tau^{(\mathcal{C}, \mathcal{O})}_{\varpi_n})\\
&\subseteq \Delta(G/B, \mathcal{L}_{\lambda}, \nu_{\mathcal{C}, \mathcal{O}}^{\rm low}, \tau^{(\mathcal{C}, \mathcal{O})}_{\lambda})\quad (\text{by}\ \eqref{eq:super_additivity_NO}). 
\end{align*}
Since the volumes of $\Delta_{\mathcal{C}, \mathcal{O}}(\Pi_A, \Pi^\ast_A, \lambda)$ and $\Delta(G/B, \mathcal{L}_{\lambda}, \nu_{\mathcal{C}, \mathcal{O}}^{\rm low}, \tau^{(\mathcal{C}, \mathcal{O})}_{\lambda})$ coincide by Theorems \ref{t:marked_chain-order_Ehrhart} and \ref{t:NO_dimension_volume}, we deduce that 
\begin{equation}\label{eq:main_result}
\begin{aligned}
\Delta_{\mathcal{C}, \mathcal{O}}(\Pi_A, \Pi^\ast_A, \lambda)-{\bm d}_{\lambda} = \Delta(G/B, \mathcal{L}_{\lambda}, \nu_{\mathcal{C}, \mathcal{O}}^{\rm low}, \tau^{(\mathcal{C}, \mathcal{O})}_{\lambda}).
\end{aligned}
\end{equation}
This completes the proof of \cref{t:main_thm}.

\subsection{Bases parametrized by lattice points in marked chain-order polytopes}\label{ss:explicit basis}

For each partition $\Pi_A \setminus \Pi^\ast_A = \mathcal{C} \sqcup \mathcal{O}$ and $\lambda \in P_+$, we write 
\[\widehat{\Delta}_{\mathcal{C}, \mathcal{O}}(\lambda) \coloneqq \Delta_{\mathcal{C}, \mathcal{O}}(\Pi_A, \Pi^\ast_A, \lambda)-{\bm d}_{\lambda}.\]
Then it follows by \eqref{eq:main_result} that $\widehat{\Delta}_{\mathcal{C}, \mathcal{O}}(\lambda) = \Delta(G/B, \mathcal{L}_{\lambda}, \nu_{\mathcal{C}, \mathcal{O}}^{\rm low}, \tau^{(\mathcal{C}, \mathcal{O})}_{\lambda})$.
Note that $\widehat{\Delta}_{\mathcal{C}, \mathcal{O}}(\lambda) \subseteq \r^N_{\geq 0}$ by the definition of ${\bm d}_{\lambda}$. 
For each ${\bm a} = (a_1, \ldots, a_N) \in \z^N_{\geq 0}$, define $v_\lambda^{(\mathcal{C}, \mathcal{O})}({\bm a}) \in V(\lambda)$ by 
\[v_\lambda^{(\mathcal{C}, \mathcal{O})}({\bm a}) \coloneqq \overline{u_1} f_{i_1}^{(a_1)} \cdots \overline{u_N} f_{i_N}^{(a_N)} v_\lambda,\]
where $f_i^{(a)}$ denotes the divided power $\frac{f_i^a}{a!}$ for $i \in I$ and $a \in \z_{\geq 0}$. 
Now we obtain a specific $\c$-basis of $V(\lambda)$ parametrized by the set of lattice points in $\widehat{\Delta}_{\mathcal{C}, \mathcal{O}}(\lambda)$ as follows. 

\begin{thm}\label{t:explicit_basis}
For each partition $\Pi_A \setminus \Pi^\ast_A = \mathcal{C} \sqcup \mathcal{O}$ and $\lambda \in P_+$, the set $\{v_\lambda^{(\mathcal{C}, \mathcal{O})}({\bm a}) \mid {\bm a} \in \widehat{\Delta}_{\mathcal{C}, \mathcal{O}}(\lambda) \cap \z^N\}$ forms a $\c$-basis of $V(\lambda)$. 
\end{thm}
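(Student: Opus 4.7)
The plan is to exploit a standard triangularity argument, using the main theorem (Theorem~\ref{t:main_thm}) and Proposition~\ref{p:property_valuation} to pair the candidate vectors in $V(\lambda)$ against a suitable basis of $V(\lambda)^{\ast} = H^0(G/B, \mathcal{L}_\lambda)$.

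First, I would expand the defining matrix factorization. Since $\exp(tf_i)v = \sum_{a\ge 0} t^a f_i^{(a)} v$, the definition of $\Omega_{\mathcal{C},\mathcal{O}}(t_1,\ldots,t_N)$ gives
\[
\Omega_{\mathcal{C}, \mathcal{O}}(t_1, \ldots, t_N)\, v_\lambda \;=\; \sum_{{\bm a} = (a_1,\ldots,a_N) \in \z_{\geq 0}^N} t_1^{a_1} \cdots t_N^{a_N}\, v_\lambda^{(\mathcal{C}, \mathcal{O})}({\bm a}).
\]
Pairing with any $\sigma \in H^0(G/B, \mathcal{L}_\lambda) = V(\lambda)^\ast$, we obtain
\[
\Upsilon_{\mathcal{C}, \mathcal{O}}(\sigma) \;=\; \sum_{{\bm a} \in \z_{\geq 0}^N} \sigma\!\left(v_\lambda^{(\mathcal{C}, \mathcal{O})}({\bm a})\right) t_1^{a_1} \cdots t_N^{a_N}.
\]
Thus the value $\sigma(v_\lambda^{(\mathcal{C}, \mathcal{O})}({\bm a}))$ is precisely the coefficient of $t^{\bm a}$ in $\Upsilon_{\mathcal{C}, \mathcal{O}}(\sigma)$, which is the mechanism that will convert valuation data into pairings.

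Next, using Theorem~\ref{t:main_thm} together with Corollary~\ref{c:of_main_result}(3), we have
\[
\{\nu_{\mathcal{C},\mathcal{O}}^{\rm low}(\sigma/\tau^{(\mathcal{C},\mathcal{O})}_\lambda) \mid \sigma \in V(\lambda)^\ast \setminus \{0\}\} = \widehat{\Delta}_{\mathcal{C},\mathcal{O}}(\lambda) \cap \z^N,
\]
and this set has cardinality $\dim_\c V(\lambda)$ by \eqref{eq:marked_chain_order_number_of_lattice_points} and Theorem~\ref{t:marked_chain-order_Ehrhart}. Since $\nu_{\mathcal{C},\mathcal{O}}^{\rm low}$ has one-dimensional leaves, Proposition~\ref{p:property_valuation}(1) lets us choose a $\c$-basis $\{\sigma_{\bm a} \mid {\bm a} \in \widehat{\Delta}_{\mathcal{C},\mathcal{O}}(\lambda)\cap \z^N\}$ of $V(\lambda)^\ast$ with $\nu_{\mathcal{C},\mathcal{O}}^{\rm low}(\sigma_{\bm a}/\tau^{(\mathcal{C},\mathcal{O})}_\lambda) = {\bm a}$. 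Applying Lemma~\ref{l:reduction_to_action_on_highest}, we get $\nu_{\mathcal{C},\mathcal{O}}^{\rm low}(\Upsilon_{\mathcal{C},\mathcal{O}}(\sigma_{\bm a})) = {\bm a}$, which, combined with the expansion above, yields
\[
\sigma_{\bm a}(v_\lambda^{(\mathcal{C},\mathcal{O})}({\bm a})) \neq 0 \quad\text{and}\quad \sigma_{\bm a}(v_\lambda^{(\mathcal{C},\mathcal{O})}({\bm b})) = 0 \text{ for all } {\bm b} < {\bm a} \text{ in lex order.}
\]

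Finally, I would index rows and columns by ${\bm a}, {\bm b} \in \widehat{\Delta}_{\mathcal{C},\mathcal{O}}(\lambda)\cap\z^N$ arranged in increasing lex order and observe that the pairing matrix $M = (\sigma_{\bm a}(v_\lambda^{(\mathcal{C},\mathcal{O})}({\bm b})))_{{\bm a},{\bm b}}$ is upper triangular with nonzero diagonal entries, hence invertible. This forces the vectors $\{v_\lambda^{(\mathcal{C},\mathcal{O})}({\bm b}) \mid {\bm b} \in \widehat{\Delta}_{\mathcal{C},\mathcal{O}}(\lambda)\cap\z^N\}$ to be $\c$-linearly independent. Since their number equals $|\widehat{\Delta}_{\mathcal{C},\mathcal{O}}(\lambda)\cap\z^N| = \dim_\c V(\lambda)$, they form a basis. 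There is no genuine obstacle here—the entire content of the theorem is packaged into Theorem~\ref{t:main_thm} (and its Corollary) via the triangular-pairing paradigm; the only point requiring care is making sure the ordering of monomials is consistent with the direction of the valuation $\nu_{\mathcal{C},\mathcal{O}}^{\rm low}$ so that the matrix is triangular in the correct direction.
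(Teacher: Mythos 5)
Your proposal is correct and follows essentially the same route as the paper: both rest on choosing, via \cref{t:main_thm}, \cref{c:of_main_result}(3) and \cref{p:property_valuation}, a basis $\{\sigma_{\bm a}\}$ of $H^0(G/B,\mathcal{L}_\lambda)$ with $\nu_{\mathcal{C},\mathcal{O}}^{\rm low}(\sigma_{\bm a}/\tau^{(\mathcal{C},\mathcal{O})}_\lambda)={\bm a}$, reading off $\sigma_{\bm a}(v_\lambda^{(\mathcal{C},\mathcal{O})}({\bm b}))$ as the coefficient of $t^{\bm b}$ in $\Upsilon_{\mathcal{C},\mathcal{O}}(\sigma_{\bm a})$ via \cref{l:reduction_to_action_on_highest}, and concluding by the lattice-point count \eqref{eq:marked_chain_order_number_of_lattice_points}. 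The only difference is presentational: you package the vanishing $\sigma_{\bm a}(v_\lambda^{(\mathcal{C},\mathcal{O})}({\bm b}))=0$ for ${\bm b}<{\bm a}$ into an invertible triangular pairing matrix, whereas the paper runs the equivalent argument by applying $\sigma_{{\bm a}_1}$ to the lex-largest term of a hypothetical dependence relation.
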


\begin{proof}
It follows by \cref{p:property_valuation} and \cref{c:of_main_result} (3) that there exists a $\c$-basis $\{\sigma_{\bm a} \mid {\bm a} \in \widehat{\Delta}_{\mathcal{C}, \mathcal{O}}(\lambda) \cap \z^N\}$ of $H^0(G/B, \mathcal{L}_{\lambda})$ such that $\nu_{\mathcal{C}, \mathcal{O}}^{\rm low} (\sigma_{\bm a}/\tau^{(\mathcal{C}, \mathcal{O})}_{\lambda}) = {\bm a}$ for all ${\bm a} \in \widehat{\Delta}_{\mathcal{C}, \mathcal{O}}(\lambda) \cap \z^N$. 
Assume for a contradiction that the set $\{v_\lambda^{(\mathcal{C}, \mathcal{O})}({\bm a}) \mid {\bm a} \in \widehat{\Delta}_{\mathcal{C}, \mathcal{O}}(\lambda) \cap \z^N\}$ is linearly dependent.
Then there exist $\ell \in \z_{>0}$, ${\bm a}_1, \ldots, {\bm a}_\ell \in \widehat{\Delta}_{\mathcal{C}, \mathcal{O}}(\lambda) \cap \z^N$, and $c_1, \ldots, c_\ell \in \c^\times$ such that ${\bm a}_1, \ldots, {\bm a}_\ell$ are all distinct and such that
\begin{equation}\label{eq:linearly_dependent}
\begin{aligned}
c_1 v_\lambda^{(\mathcal{C}, \mathcal{O})}({\bm a}_1) + \cdots + c_\ell v_\lambda^{(\mathcal{C}, \mathcal{O})}({\bm a}_\ell) = 0.
\end{aligned}
\end{equation}
Without loss of generality, we may assume that ${\bm a}_1 > {\bm a}_2 > \cdots > {\bm a}_\ell$ with respect to the lexicographic order. 
Note that 
\begin{equation}\label{eq:coefficients_from_bases}
\begin{aligned}
\Upsilon_{\mathcal{C}, \mathcal{O}} (\sigma_{{\bm a}_1})&= \sigma_{{\bm a}_1} (\Omega_{\mathcal{C}, \mathcal{O}} (t_1, \ldots, t_N) v_\lambda)\\
&= \sum_{{\bm a} = (a_1, \ldots, a_N) \in \z_{\geq 0}^N} \sigma_{{\bm a}_1} (v_\lambda^{(\mathcal{C}, \mathcal{O})}({\bm a})) t_1^{a_1} \cdots t_N^{a_N}.
\end{aligned}
\end{equation}
Since we have $\nu_{\mathcal{C}, \mathcal{O}}^{\rm low} (\Upsilon_{\mathcal{C}, \mathcal{O}} (\sigma_{{\bm a}_1})) = \nu_{\mathcal{C}, \mathcal{O}}^{\rm low} (\sigma_{{\bm a}_1}/\tau^{(\mathcal{C}, \mathcal{O})}_{\lambda}) = {\bm a}_1$ by \cref{l:reduction_to_action_on_highest}, it follows by \eqref{eq:coefficients_from_bases} that $\sigma_{{\bm a}_1} (v_\lambda^{(\mathcal{C}, \mathcal{O})}({\bm a}_1)) \neq 0$ and that $\sigma_{{\bm a}_1} (v_\lambda^{(\mathcal{C}, \mathcal{O})}({\bm a})) = 0$ for all ${\bm a} < {\bm a}_1$. 
In particular, we have $\sigma_{{\bm a}_1} (v_\lambda^{(\mathcal{C}, \mathcal{O})}({\bm a}_2)) = \cdots = \sigma_{{\bm a}_1} (v_\lambda^{(\mathcal{C}, \mathcal{O})}({\bm a}_\ell)) = 0$, which implies that 
\begin{align*}
\sigma_{{\bm a}_1} (c_1 v_\lambda^{(\mathcal{C}, \mathcal{O})}({\bm a}_1) + \cdots + c_\ell v_\lambda^{(\mathcal{C}, \mathcal{O})}({\bm a}_\ell)) = c_1 \sigma_{{\bm a}_1} (v_\lambda^{(\mathcal{C}, \mathcal{O})}({\bm a}_1)). 
\end{align*}
Since $\sigma_{{\bm a}_1} (v_\lambda^{(\mathcal{C}, \mathcal{O})}({\bm a}_1)) \neq 0$, we conclude by \eqref{eq:linearly_dependent} that $c_1 = 0$, which gives a contradiction. 
From this, we know that the set $\{v_\lambda^{(\mathcal{C}, \mathcal{O})}({\bm a}) \mid {\bm a} \in \widehat{\Delta}_{\mathcal{C}, \mathcal{O}}(\lambda) \cap \z^N\}$ is linearly independent. 
Then the assertion of the theorem follows by \eqref{eq:marked_chain_order_number_of_lattice_points}.
\end{proof}

Let $\{\alpha_i \mid i \in I\} \subseteq P$ be the set of simple roots,
\[\Phi_+ \coloneqq \{\alpha_i + \alpha_{i+1} + \cdots + \alpha_j \mid 1 \leq i \leq j \leq n\} \subseteq P\] 
the set of positive roots, and $f_\beta \in \mathfrak{g}$ a negative root vector corresponding to $\beta \in \Phi_+$. 

\begin{rem}
Our basis $\{v_\lambda^{(\mathcal{C}, \mathcal{O})}({\bm a}) \mid {\bm a} \in \widehat{\Delta}_{\mathcal{C}, \mathcal{O}}(\lambda) \cap \z^N\}$ of $V(\lambda)$ is an analog of an \emph{essential basis} introduced by Feigin--Fourier--Littelmann \cite{FeFL1, FeFL3} and by Fang--Fourier--Littelmann \cite[Section 9.1]{FaFL}.
Following \cite[Definition 2]{FaFL}, a sequence $S = (\beta_1, \ldots, \beta_N) \in \Phi_+^N$ is called a \emph{birational sequence} if the following morphism is birational:
\[\c^N \rightarrow G/B,\quad (t_1, \ldots, t_N) \mapsto \exp(t_1 f_{\beta_1}) \cdots \exp(t_N f_{\beta_N}) \bmod B.\]  
An essential basis of $V(\lambda)$ is a $\c$-basis consisting of \emph{essential vectors} defined from a birational sequence $S$ and from a specific total order on $\z_{\geq 0}^N$ (see \cite[Definition 7 and Remark 5]{FaFL}). 
It is parametrized by the set of lattice points in the corresponding Newton--Okounkov body of $(G/B, \mathcal{L}_\lambda)$ under some conditions (see \cite[Remark 9 and Proposition 8]{FaFL}). 
Note that a birational sequence $S = (\beta_1, \ldots, \beta_N)$ corresponds to a birational morphism $\c^N \rightarrow G/B$ given as a product of $1$-parameter subgroups corresponding to $f_{\beta_1}, \ldots, f_{\beta_N}$, while our birational morphism $\widehat{\Omega}_{\mathcal{C}, \mathcal{O}} \colon \c^N \rightarrow G/B$ is defined to be a product of such $1$-parameter subgroups with translations by $\overline{u_1}, \ldots, \overline{u_N}$.
The definition of essential bases in \cite[Section 9.1]{FaFL} can be generalized to a birational morphism of the form
\[\c^N \rightarrow G/B,\quad (t_1, \ldots, t_N) \mapsto \overline{w_1} \exp(t_1 f_{\beta_1}) \cdots \overline{w_N} \exp(t_N f_{\beta_N}),\] 
for some $w_1, \ldots, w_N \in W$ and $\beta_1, \ldots, \beta_N \in \Phi_+$. 
Then our basis in \cref{t:explicit_basis} can be understood as such \emph{generalized} essential basis.
\end{rem}

\begin{ex}
If $\mathcal{C} = \emptyset$ and $\mathcal{O} = \Pi_A \setminus \Pi^\ast_A$, then we have
\[v_\lambda^{(\emptyset, \Pi_A \setminus \Pi^\ast_A)}({\bm a}) = f_{i_1}^{(a_1)} f_{i_2}^{(a_2)} \cdots f_{i_N}^{(a_N)} v_\lambda\]
for ${\bm a} = (a_1, \ldots, a_N) \in \z^N_{\geq 0}$, and the $\c$-basis $\{v_\lambda^{(\emptyset, \Pi_A \setminus \Pi^\ast_A)}({\bm a}) \mid {\bm a} \in \widehat{\Delta}_{\emptyset, \Pi_A \setminus \Pi^\ast_A}(\lambda) \cap \z^N\}$ of $V(\lambda)$ coincides with the essential basis in \cite[Section 9.1]{FaFL} associated with the birational sequence $S \coloneqq (\alpha_{i_1}, \alpha_{i_2}, \ldots, \alpha_{i_N})$ and with the \emph{homogeneous lexicographic order} on $\z_{\geq 0}^N$ (see \cite[Example 9]{FaFL}).
\end{ex}

\begin{ex}
If $\mathcal{C} = \Pi_A \setminus \Pi^\ast_A$ and $\mathcal{O} = \emptyset$, then we have
\begin{align*}
v_\lambda^{(\Pi_A \setminus \Pi^\ast_A, \emptyset)}({\bm a}) &= \overline{s}_{i_1} f_{i_1}^{(a_1)} \overline{s}_{i_2} f_{i_2}^{(a_2)} \cdots \overline{s}_{i_N} f_{i_N}^{(a_N)} v_\lambda\\
&\in \c^{\times} \overline{w_0} f_{\beta_1}^{a_1} f_{\beta_2}^{a_2} \cdots f_{\beta_N}^{a_N} v_\lambda,
\end{align*}
where we write $\beta_\ell \coloneqq s_{i_N} s_{i_{N-1}} \cdots s_{i_{\ell+1}} (\alpha_{i_\ell}) \in \Phi_+$ for $1 \leq \ell \leq N$.
Since we have 
\[(\beta_N, \ldots, \beta_1) = (\alpha_1, \alpha_1 +\alpha_2, \ldots, \alpha_1 + \cdots + \alpha_n, \alpha_2, \alpha_2 +\alpha_3, \ldots, \alpha_{n-1}, \alpha_{n-1} +\alpha_n, \alpha_n),\]
the $\c$-basis $\{v_\lambda^{(\Pi_A \setminus \Pi^\ast_A, \emptyset)}({\bm a}) \mid {\bm a} \in \widehat{\Delta}_{\Pi_A \setminus \Pi^\ast_A, \emptyset}(\lambda) \cap \z^N\}$ of $V(\lambda)$ coincides with a $\c$-basis given in \cite[Theorem 3]{FeFL1} up to the action by $\overline{w_0}$ and scalar multiples.
\end{ex}

\subsection{Comparison with highest term valuations}\label{ss:highest term}

In this subsection, we study the highest term valuation $\nu_{\mathcal{C}, \mathcal{O}}^{\rm high}$ defined in Section \ref{ss:NO_body_flag} from a partition $\Pi_A \setminus \Pi^\ast_A = \mathcal{C} \sqcup \mathcal{O}$.
When $\mathcal{C} = \emptyset$ and $\mathcal{O} = \Pi_A \setminus \Pi^\ast_A$, then the highest term valuation $\nu_{\emptyset, \Pi_A \setminus \Pi^\ast_A}^{\rm high}$ is studied by Kaveh \cite{Kav} who proved in \cite[Corollary 4.2]{Kav} that the Newton--Okounkov body $\Delta(G/B, \mathcal{L}_\lambda, \nu_{\emptyset, \Pi_A \setminus \Pi^\ast_A}^{\rm high}, \tau_\lambda)$ is unimodularly equivalent to the string polytope associated with ${\bm i}_A$ and $\lambda$. 
This string polytope is unimodularly equivalent to the Gelfand--Tsetlin polytope $GT(\lambda)$ by \cite[Corollary 5]{Lit}.
When $\mathcal{C} = \Pi_A \setminus \Pi^\ast_A$ and $\mathcal{O} = \emptyset$, then $\nu_{\Pi_A \setminus \Pi^\ast_A, \emptyset}^{\rm high}$ is the same as the highest term valuation studied in \cite[Section 6]{Fuj}. 
In a way similar to the proof of \cite[Theorem 6.2 (2)]{Fuj}, we deduce the following.

\begin{thm}\label{t:highest_term_valuation}
For each partition $\Pi_A \setminus \Pi^\ast_A = \mathcal{C} \sqcup \mathcal{O}$ and $\lambda \in P_+$, the Newton--Okounkov body $\Delta(G/B, \mathcal{L}_\lambda, \nu_{\mathcal{C}, \mathcal{O}}^{\rm high}, \tau_\lambda^{(\mathcal{C}, \mathcal{O})})$ is unimodularly equivalent to the Gelfand--Tsetlin polytope $GT(\lambda)$. 
In particular, it is independent of the choice of a partition $\Pi_A \setminus \Pi^\ast_A = \mathcal{C} \sqcup \mathcal{O}$ up to unimodular equivalence.
\end{thm}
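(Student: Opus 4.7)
The plan is to combine Kaveh's realization of the string polytope as a Newton--Okounkov body in \cite[Corollary 4.2]{Kav} with Littelmann's unimodular equivalence between the string polytope for ${\bm i}_A$ and $GT(\lambda)$ in \cite[Corollary 5]{Lit}, which together settle the case $(\mathcal{C}, \mathcal{O}) = (\emptyset, \Pi_A \setminus \Pi_A^\ast)$, and then to extend this by a partition-independence argument modeled on \cite[Theorem 6.2(2)]{Fuj}. The goal is to produce, for each partition $(\mathcal{C}, \mathcal{O})$, a unimodular affine map carrying $\Delta(G/B, \mathcal{L}_\lambda, \nu^{\rm high}_{\mathcal{C}, \mathcal{O}}, \tau^{(\mathcal{C}, \mathcal{O})}_\lambda)$ onto a translate of $GT(\lambda)$.

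\textbf{Reduction to fundamental weights.} First I would verify that the highest-term analogue of \cref{l:reduction_to_action_on_highest} holds with the same proof, since $\Upsilon_{\mathcal{C}, \mathcal{O}}(\tau^{(\mathcal{C}, \mathcal{O})}_\lambda)$ still has nonzero constant term by \eqref{eq:specific_section_tau}. Then, using the factorization $\tau^{(\mathcal{C}, \mathcal{O})}_\lambda = \prod_i (\tau^{(\mathcal{C}, \mathcal{O})}_{\varpi_i})^{\lambda_i}$, super-additivity \eqref{eq:super_additivity_NO}, the Minkowski decomposition of $GT(\lambda)$ coming from \cref{t:decomposition_property_marked_poset} applied to $(\emptyset, \Pi_A \setminus \Pi_A^\ast)$, and the volume-matching argument from Theorems \ref{t:marked_chain-order_Ehrhart} and \ref{t:NO_dimension_volume}, the problem reduces to producing, for each $k \in I$ and each partition, a single unimodular affine map $U_k$ (independent of $\lambda$) sending $\Delta(G/B, \mathcal{L}_{\varpi_k}, \nu^{\rm high}_{\mathcal{C}, \mathcal{O}}, \tau^{(\mathcal{C}, \mathcal{O})}_{\varpi_k})$ onto a translate of $GT(\varpi_k)$.

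\textbf{Core computation.} For $b \in \mathcal{B}(\varpi_k)$, the minusculity of $V(\varpi_k)$ restricts the expansion of $\Upsilon_{\mathcal{C}, \mathcal{O}}(G^{\rm up}_{\varpi_k}(b))$ used in Section \ref{ss:proof} to exponents ${\bm a} \in \{0, 1\}^N$, so that $\nu^{\rm high}_{\mathcal{C}, \mathcal{O}}(\Upsilon_{\mathcal{C}, \mathcal{O}}(G^{\rm up}_{\varpi_k}(b))) = -{\bm a}^{\max}_{\mathcal{C}, \mathcal{O}}(b)$, where ${\bm a}^{\max}_{\mathcal{C}, \mathcal{O}}(b)$ is the lex-largest ${\bm a} \in \{0, 1\}^N$ satisfying $u_1 \tilde{f}_{i_1}^{a_1} \cdots u_N \tilde{f}_{i_N}^{a_N} b_{\varpi_k} = b$. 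I would then run a block-by-block analysis along ${\bm i}_A$ mirroring \cref{l:explicit_computation_of_values_for_minors} but choosing lex-\emph{largest} compatible values, tracking the effect of each $u_\ell \in \{e, s_{i_\ell}\}$ on column tableaux (an adjacent transposition of entries under the identification $W \simeq \mathfrak{S}_{n+1}$). This should show that ${\bm a}^{\max}_{\mathcal{C}, \mathcal{O}}(b)$ and ${\bm a}^{\max}_{\emptyset, \Pi_A \setminus \Pi_A^\ast}(b)$ are related by an integer-affine map that is triangular with $\pm 1$ diagonal entries in the natural coordinate order, hence unimodular. Together with the counting identity \eqref{eq:marked_chain_order_number_of_lattice_points} and \cref{p:property_valuation}, this produces the desired $U_k$, and the Kaveh--Littelmann case closes the argument.

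\textbf{Main obstacle.} The hard part is the combinatorial bookkeeping in the core computation. In contrast with the lowest-term case of \cref{t:main_thm}, where the minimality condition decouples into block-local inequalities that reproduce the marked chain-order description directly, the maximality condition couples coordinates across blocks: each choice of $a_\ell = 1$ at a position with $u_\ell = s_{i_\ell}$ induces a rearrangement of tableau entries that propagates into the subsequent blocks. Proving that all such rearrangements assemble into a single \emph{unimodular} transformation, rather than an arbitrary element of $\mathrm{GL}_N(\mathbb{Z})$, will require an essential use of the minuscule constraints $\varepsilon_i, \varphi_i \in \{0, 1\}$ to bound the amount of cross-block coupling and force triangularity.
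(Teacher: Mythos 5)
There is a genuine gap: your ``core computation'' is not an auxiliary step but is precisely the substance of the theorem, and you leave it as a plausibility claim (``this should show \dots''), flagging it yourself as the main obstacle. The paper does not attempt the direct block-by-block comparison you envisage; it deduces \cref{t:highest_term_valuation} by running the argument of \cite[Theorem 6.2\,(2)]{Fuj}, a crystal-basis computation for Bott--Samelson-type charts that evaluates the highest-term valuation on the dual global basis \emph{directly} for coordinate systems containing the translations $\overline{u_1},\ldots,\overline{u_N}$, and identifies the resulting body with the string polytope for ${\bm i}_A$, hence with $GT(\lambda)$ by \cite[Corollary 5]{Lit}. In your sketch only the case $(\mathcal{C},\mathcal{O})=(\emptyset,\Pi_A\setminus\Pi_A^\ast)$ (Kaveh plus Littelmann) is actually settled; the claim that ${\bm a}^{\max}_{\mathcal{C},\mathcal{O}}(b)$ is obtained from ${\bm a}^{\max}_{\emptyset,\Pi_A\setminus\Pi_A^\ast}(b)$ by a single integer-affine, triangular, $\pm 1$-diagonal map is exactly what has to be proved, and nothing in the proposal substitutes for that proof.

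Two further problems would remain even if that lemma were granted. First, the reduction to fundamental weights is logically insufficient as stated: a unimodular map $U_k$ produced \emph{separately for each} $k$ does not yield the theorem for general $\lambda$, because $\sum_k \lambda_k U_k^{-1}(GT(\varpi_k))$ is a unimodular image of $GT(\lambda)=\sum_k\lambda_k GT(\varpi_k)$ only when the linear parts of all the $U_k$ coincide; you need one linear map serving all $k\in I$ simultaneously (translations may depend on $k$) before \eqref{eq:super_additivity_NO}, \cref{t:decomposition_property_marked_poset} and the volume comparison can close the argument. The lowest-term situation of \cref{t:main_thm} shows this is not a pedantic point: there the $\lambda$-level bodies for different partitions are genuinely inequivalent ($GT(\lambda)$ and $FFLV(\lambda)$ have different facet numbers, see \cite{Fou}) even though all fundamental-level and Ehrhart data agree, so fundamental-weight equivalences do not propagate for free. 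Second, the highest-term analogue of \cref{l:reduction_to_action_on_highest} does not hold ``with the same proof'': a nonzero constant term of $\Upsilon_{\mathcal{C},\mathcal{O}}(\tau^{(\mathcal{C},\mathcal{O})}_\lambda)$ controls its \emph{lowest} term, not its highest, so in general one only gets $\nu^{\rm high}_{\mathcal{C},\mathcal{O}}(\sigma/\tau^{(\mathcal{C},\mathcal{O})}_\lambda)=\nu^{\rm high}_{\mathcal{C},\mathcal{O}}(\Upsilon_{\mathcal{C},\mathcal{O}}(\sigma))-\nu^{\rm high}_{\mathcal{C},\mathcal{O}}(\Upsilon_{\mathcal{C},\mathcal{O}}(\tau^{(\mathcal{C},\mathcal{O})}_\lambda))$, a shift that is harmless up to translation but must be stated and handled correctly rather than asserted to vanish.
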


The situation is quite different from the case of $\nu_{\mathcal{C}, \mathcal{O}}^{\rm low}$ since the combinatorics of a marked chain-order polytope heavily depends on the choice of a partition $\Pi_A \setminus \Pi^\ast_A = \mathcal{C} \sqcup \mathcal{O}$. 
Indeed, the Gelfand--Tsetlin polytope $GT(\lambda)$ and the FFLV polytope $FFLV(\lambda)$ have different numbers of facets in general (see \cite[Theorem 1]{Fou}).

\begin{rem}
The highest term valuation $\nu_{\mathcal{C}, \mathcal{O}}^{\rm high}$ is defined from the reduced word ${\bm i}_A$ and from a partition $\Pi_A \setminus \Pi^\ast_A = \mathcal{C} \sqcup \mathcal{O}$ which corresponds to a partition of the positions of elements in ${\bm i}_A$. 
Since we discuss Bott--Samelson varieties of general Lie type in \cite[Theorem 6.2 (2)]{Fuj}, \cref{t:highest_term_valuation} is naturally extended to an arbitrary reduced word of general Lie type and to the corresponding Bott--Samelson variety if we replace $GT(\lambda)$ with a generalized string polytope. 
\end{rem}

\section{Type $C$ case}\label{s:type_C}

In this section, we discuss the case of the symplectic group $Sp_{2n} (\c)$ (of type $C_n$).
We review some previous realizations of Gelfand--Tsetlin polytopes \cite{Oko2} and FFLV polytopes \cite{Kir2} of type $C_n$ as Newton--Okounkov bodies, and observe that our main result (\cref{t:main_thm}) cannot be naturally extended to type $C_n$ even in the case $n = 2$.
Set
\[\overline{w}_0 \coloneqq \begin{pmatrix}
0 & 0 & 0 & \cdots & -1 \\
\vdots & \vdots & \vdots & \iddots & \vdots \\
0 & 0 & 1 & \cdots & 0 \\
0 & -1 & 0 & \cdots & 0 \\
1 & 0 & 0 & \cdots & 0
\end{pmatrix} \in SL_{2n}(\c),\]
and define an algebraic group automorphism $\omega \colon SL_{2n}(\c) \xrightarrow{\sim} SL_{2n}(\c)$ by 
\[\omega(A) \coloneqq \overline{w}_0^{-1} (A^T)^{-1} \overline{w}_0\] 
for $A \in SL_{2n}(\c)$, where $A^T$ denotes the transpose of $A$.
Then the fixed point subgroup 
\[SL_{2n}(\c)^\omega \coloneqq \{A \in SL_{2n}(\c) \mid \omega(A) = A\}\]
coincides with the symplectic group  
\[Sp_{2n}(\c) \coloneqq \{A \in SL_{2n}(\c) \mid A^T \overline{w}_0 A = \overline{w}_0\}\]
with respect to the skew-symmetric matrix $\overline{w}_0$. 
In addition, the subgroup $B_{C_n} \subseteq SL_{2n}(\c)^\omega$ consisting of upper triangular matrices is a Borel subgroup of $Sp_{2n}(\c)$; hence the full flag variety of type $C_n$ is given as $Sp_{2n}(\c)/B_{C_n}$. 
Let $U^-_{C_n}$ (resp., $H_{C_n}$) denote the subgroup of $SL_{2n}(\c)^\omega$ consisting of unipotent lower triangular matrices (resp., consisting of diagonal matrices). 
Then $U^-_{C_n}$ coincides with the unipotent radical of the Borel subgroup of $Sp_{2n}(\c)$ which is opposite to $B_{C_n}$ with respect to the maximal torus $H_{C_n}$.
Let $\mathfrak{sp}_{2n}(\c)$ be the Lie algebra of $Sp_{2n}(\c)$, and $P_+$ the set of dominant integral weights for $\mathfrak{sp}_{2n}(\c)$.
For each $\lambda \in P_+$, we obtain a globally generated line bundle $\mathcal{L}_\lambda$ on $Sp_{2n}(\c)/B_{C_n}$ as in \eqref{eq:line_bundle_type_A}.  
Let us identify the set $I_{C_n}$ of vertices of the Dynkin diagram of type $C_n$ with $\{1, 2, \ldots, n\}$ as follows:
\begin{align*}
&C_n\ \begin{xy}
\ar@{=>} (50,0) *++!D{1} *\cir<3pt>{};
(60,0) *++!D{2} *\cir<3pt>{}="C"
\ar@{-} "C";(65,0) \ar@{.} (65,0);(70,0)^*!U{}
\ar@{-} (70,0);(75,0) *++!D{n-1} *\cir<3pt>{}="D"
\ar@{-} "D";(85,0) *++!D{n} *\cir<3pt>{}="E"
\end{xy}.
\end{align*}
Let $\{\varpi_i \mid i \in I_{C_n}\}$ denote the set of fundamental weights, $N(H_{C_n})$ the normalizer of $H_{C_n}$ in $Sp_{2n}(\c)$, and $W_{C_n} \coloneqq N(H_{C_n})/H_{C_n}$ the Weyl group which is generated by the set $\{s_i \mid i \in I_{C_n}\}$ of simple reflections. 
We write $N \coloneqq n^2$, and define a reduced word ${\bm i}_C = (i_1, \ldots, i_N)$ for the longest element $w_0 \in W_{C_n}$ as follows:
\begin{align*}
{\bm i}_C \coloneqq (1, \underbrace{2, 1, 2}_3, \underbrace{3, 2, 1, 2, 3}_5, \ldots, \underbrace{n, n-1, \ldots, 1, \ldots, n-1, n}_{2n-1}).
\end{align*}
For $1 \leq i, j \leq 2n$, let $E_{i, j}$ denote the $2n \times 2n$-matrix whose $(i, j)$-entry is $1$ and other entries are all $0$. 
Then we can take Chevalley generators $e_i, f_i, h_i \in \mathfrak{sp}_{2n}(\c)$, $i \in I_{C_n}$, as 
\begin{align*}
&e_1 \coloneqq E_{n, n+1},\quad f_1 \coloneqq E_{n+1, n},\quad h_1 \coloneqq E_{n, n} - E_{n+1, n+1},\\ &e_i \coloneqq E_{n-i+1, n-i+2} + E_{n+i-1, n+i},\quad f_i \coloneqq E_{n-i+2, n-i+1} + E_{n+i, n+i-1},\\ 
&h_i \coloneqq E_{n-i+1, n-i+1} - E_{n-i+2, n-i+2} + E_{n+i-1, n+i-1} - E_{n+i, n+i}
\end{align*}
for $2 \leq i \leq n$. 
For $1 \leq i \leq n$, define a lift $\overline{s}_i \in N(H_{C_n})$ for $s_i \in W_{C_n}$ by $\overline{s}_i \coloneqq \exp(f_i) \exp(-e_i) \exp(f_i)$.
For $\lambda \in P_+$ and $1 \leq k \leq n$, we write $\lambda_{\leq k} \coloneqq \sum_{1 \leq \ell \leq k} \langle \lambda, h_\ell \rangle$. 
Let us consider the \emph{Gelfand--Tsetlin poset} $(\Pi_C, \Pi^\ast_C, \lambda)$ of type $C_n$ whose Hasse diagram is given in Figure \ref{type_C_marked_Hasse}, where the circles (resp., the rectangles) denote the elements of $\Pi_C \setminus \Pi^\ast_C$ (resp., $\Pi^\ast_C$), and we write 
\[\Pi_C \setminus \Pi^\ast_C = \{q_j^{(i)} \mid 1 \leq i \leq n,\ 1 \leq j \leq 2i-1\}.\]
Note that the marking $(\lambda_a)_{a \in \Pi^\ast_C}$ is given as $(\underbrace{0, \ldots, 0}_n, \lambda_{\leq 1}, \lambda_{\leq 2}, \ldots, \lambda_{\leq n})$, which is also denoted by $\lambda$.
\begin{figure}[!ht]
\begin{center}
   \includegraphics[width=10.0cm,bb=40mm 140mm 170mm 230mm,clip]{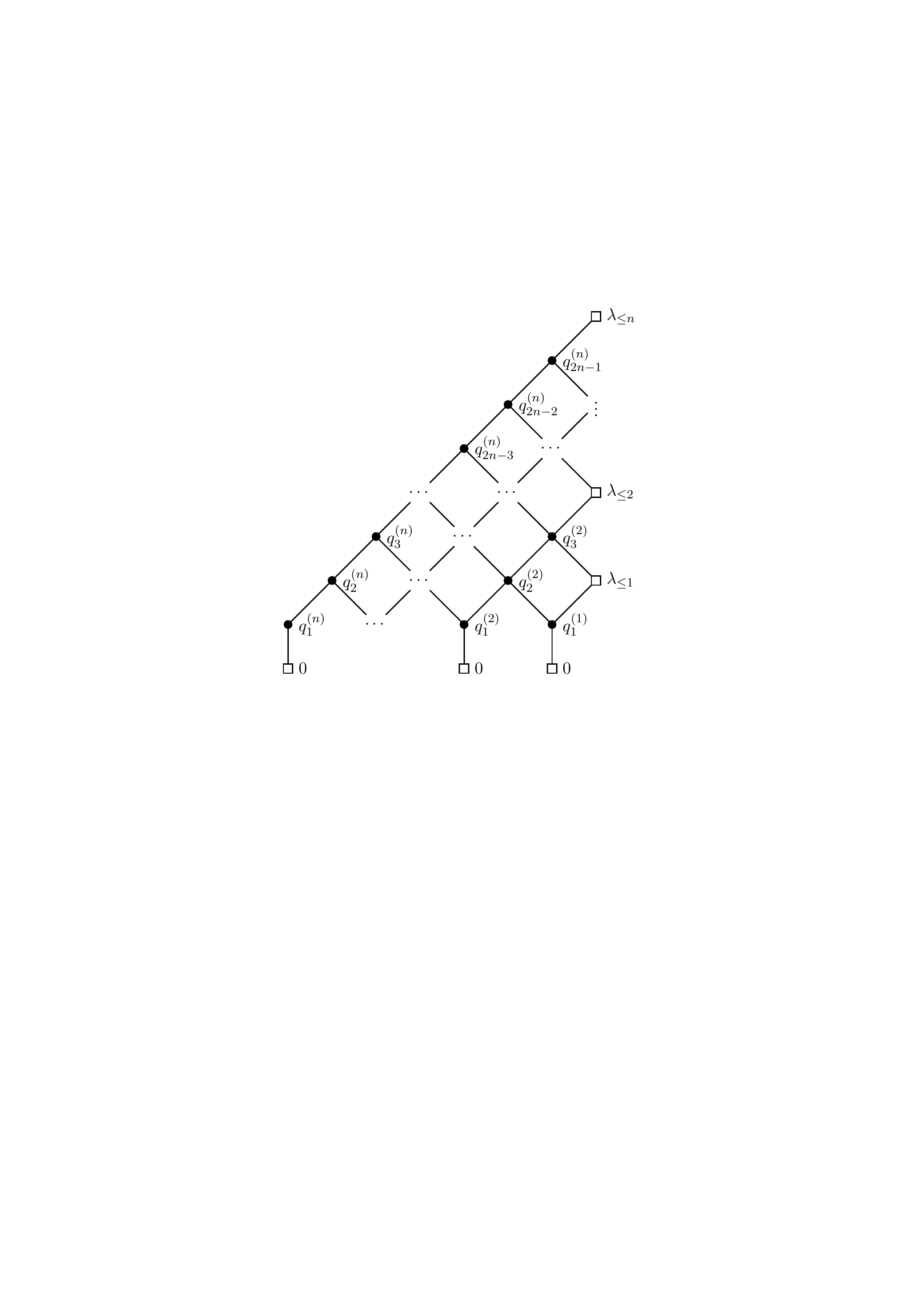}
	\caption{The marked Hasse diagram of the Gelfand--Tsetlin poset $(\Pi_C, \Pi^\ast_C, \lambda)$ of type $C_n$.}
	\label{type_C_marked_Hasse}
\end{center}
\end{figure}
By definition, the marked order polytope $\mathcal{O}(\Pi_C, \Pi^\ast_C, \lambda)$ coincides with the Gelfand--Tsetlin polytope $GT_{C_n}(\lambda)$ of type $C_n$ (see \cite[Section 6]{Lit} for the definition), and the marked chain polytope $\mathcal{C}(\Pi_C, \Pi^\ast_C, \lambda)$ coincides with the FFLV polytope $FFLV_{C_n}(\lambda)$ of type $C_n$ (see \cite[equation (2.2)]{FeFL2} for the definition). 
For all ${\bm x} = (x_{i, j} \mid 1 \leq i \leq n,\ i \leq j \leq 2n-i) \in \c^N$, there uniquely exists $(y_{i, j} \mid 1 \leq j \leq n-1,\ j+1 \leq i \leq 2n-j) \in \c^{n(n-1)}$ such that $A({\bm x}) = (a_{i, j})_{i, j} \in Sp_{2n}(\c)$, where 
\[a_{i, j} \coloneqq
\begin{cases}
x_{i, j} &\text{if}\ i \leq j \leq 2n-i,\\ 
y_{i, j} &\text{if}\ j+1 \leq i \leq 2n-j,\\ 
(-1)^i &\text{if}\ i+j = 2n+1,\\ 
0 &\text{if}\ i+j > 2n+1. 
\end{cases}\]
Then the map $\c^N \rightarrow \overline{w}_0 U^-_{C_n}$, ${\bm x} \mapsto A({\bm x})$, is an isomorphism of varieties, and the following morphism is birational:
\begin{align*}
\c^N \rightarrow Sp_{2n}(\c)/B_{C_n},\quad {\bm x} \mapsto A({\bm x}) \bmod B_{C_n}.  
\end{align*}

\begin{ex}
Let $n = 3$.
Then we have 
\[A({\bm x}) = \begin{pmatrix}
x_{1, 1} & x_{1, 2} & x_{1, 3} & x_{1, 4} & x_{1, 5} & -1 \\
y_{2, 1} & x_{2, 2} & x_{2, 3} & x_{2, 4} & 1 & 0 \\
y_{3, 1} & y_{3, 2} & x_{3, 3} & -1 & 0 & 0 \\
y_{4, 1} & y_{4, 2} & 1 & 0 & 0 & 0 \\
y_{5, 1} & -1 & 0 & 0 & 0 & 0 \\
1 & 0 & 0 & 0 & 0 & 0 
\end{pmatrix},\]
where
\begin{align*}
&y_{5, 1} = x_{1, 5}, \quad\quad\quad y_{4, 2} = x_{2, 4}, \quad\quad\quad y_{4, 1} = x_{1, 4} - x_{1, 5} x_{2, 4}, \quad\quad\quad y_{3, 2} = x_{2, 3} + x_{2, 4} x_{3, 3},\\ 
&y_{3, 1} = x_{1, 3} - x_{1, 5} x_{2, 3} + x_{1, 4} x_{3, 3} - x_{1, 5} x_{2, 4} x_{3, 3}, \quad\quad\quad y_{2, 1} = x_{1, 2} - x_{1, 5} x_{2, 2} + x_{1, 4} x_{2, 3} - x_{1, 3} x_{2, 4}.
\end{align*}
\end{ex}

We arrange the coordinates of ${\bm x}$ as 
\begin{align*}
(x_1, x_2, \ldots, x_N) \coloneqq (x_{1, 2n-1}, x_{1, 2n-2}, \ldots, x_{1, 1}, x_{2, 2n-2}, x_{2, 2n-3}, \ldots, x_{n-1, n-1}, x_{n, n}).
\end{align*}
Let us consider the lowest term valuation $\nu^{\rm low}_{x_1 > \cdots > x_N}$ on $\c(x_1, \ldots, x_N)$ with respect to the lexicographic order $x_1 > \cdots > x_N$ (see \cref{ex:lowest_and_highest_term_valuation}). 
The valuation $\nu^{\rm low}_{x_1 > \cdots > x_N}$ is slightly different from the one defined in \cite[Section 3.2]{Kir2}. 
However, the proof of \cite[Theorem 3.3]{Kir2} can also be applied to this valuation, and we obtain the following.

\begin{thm}[{see the proof of \cite[Theorem 3.3]{Kir2}}]\label{t:Kiritchenko_type_C}
Let $\lambda \in P_+$, and take a nonzero section $\tau \in H^0 (Sp_{2n}(\c)/B_{C_n}, \mathcal{L}_\lambda)$. 
Then the Newton--Okounkov body $\Delta(Sp_{2n}(\c)/B_{C_n}, \mathcal{L}_\lambda, \nu^{\rm low}_{x_1 > \cdots > x_N}, \tau)$ coincides with the FFLV polytope $FFLV_{C_n}(\lambda)$ up to translations by integer vectors.
\end{thm}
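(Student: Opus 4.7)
The plan is to follow Kiritchenko's argument for \cite[Theorem 3.3]{Kir2}, verifying that only minor modifications are required to accommodate the present lexicographic order $x_1 > x_2 > \cdots > x_N$ on the coordinates. First I would reduce to the case of a fundamental weight: since the FFLV polytope of type $C_n$ enjoys a Minkowski decomposition $FFLV_{C_n}(\lambda) = \sum_{k \in I_{C_n}} \langle \lambda, h_k \rangle \, FFLV_{C_n}(\varpi_k)$ and the Newton--Okounkov body is super-additive (see \eqref{eq:super_additivity_NO}), if we can exhibit, for each $k \in I_{C_n}$ and a chosen nonzero $\tau_{\varpi_k} \in H^0(Sp_{2n}(\c)/B_{C_n}, \mathcal{L}_{\varpi_k})$, an inclusion $FFLV_{C_n}(\varpi_k) - {\bm c}_{\varpi_k} \subseteq \Delta(Sp_{2n}(\c)/B_{C_n}, \mathcal{L}_{\varpi_k}, \nu^{\rm low}_{x_1 > \cdots > x_N}, \tau_{\varpi_k})$ for some integer vector ${\bm c}_{\varpi_k}$, then linearity and the independence of the Newton--Okounkov body on the choice of $\tau$ give the corresponding inclusion for arbitrary $\lambda \in P_+$.

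Next, for each fundamental weight $\varpi_k$, I would compute the lowest term valuations of a distinguished family of sections. Using the birational parametrization ${\bm x} \mapsto A({\bm x}) \bmod B_{C_n}$, any $\sigma \in H^0(Sp_{2n}(\c)/B_{C_n}, \mathcal{L}_{\varpi_k}) \simeq V(\varpi_k)^\ast$ is represented by the polynomial $\sigma(A({\bm x}) v_{\varpi_k}) \in \c[x_1, \ldots, x_N]$, which can be expanded in terms of Plücker-type minors; as in \cite[Section 3]{Kir2}, each dual basis element $G^{\rm up}_{\varpi_k}(b)$ of the lower global basis produces a polynomial whose lowest monomial (relative to our lex order) has exponent vector encoded by an FFLV pattern. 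The upshot is an injection from $\mathcal{B}(\varpi_k)$ into $(FFLV_{C_n}(\varpi_k) - {\bm c}_{\varpi_k}) \cap \z^N$ via $b \mapsto \nu^{\rm low}_{x_1 > \cdots > x_N}(\sigma_b/\tau_{\varpi_k})$; combinatorial counting (FFLV dimension formula) forces this injection to be a bijection, giving the desired inclusion.

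Finally, I would upgrade inclusion to equality for general $\lambda \in P_+$ via a volume argument: by Theorem \ref{t:NO_dimension_volume} the normalized volume of $\Delta(Sp_{2n}(\c)/B_{C_n}, \mathcal{L}_\lambda, \nu^{\rm low}_{x_1 > \cdots > x_N}, \tau)$ equals $\deg_{\mathcal{L}_\lambda}(Sp_{2n}(\c)/B_{C_n})/N!$, which by the Weyl dimension formula (and Ehrhart reciprocity applied to the FFLV polytope, cf.\ \cite[Theorem 3.10]{FeFL2}) matches the normalized volume of $FFLV_{C_n}(\lambda)$; combined with the Minkowski inclusion and convexity, equality follows up to a translation by ${\bm c}_\lambda := \sum_k \langle \lambda, h_k\rangle\, {\bm c}_{\varpi_k}$.

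The principal obstacle will be the lowest-term computation on fundamental representations of $\mathfrak{sp}_{2n}(\c)$: unlike in type $A$, the fundamental representations $V(\varpi_k)$ are not minuscule for $k \geq 2$, so the matrix coefficients $\sigma(A({\bm x}) v_{\varpi_k})$ contain many cross terms coming from the defining relations of $Sp_{2n}(\c)$ (the $y_{i,j}$ in the example above are polynomials, not monomials, in the $x_{i,j}$). The combinatorial identification of the leading exponent with an FFLV lattice point already required substantial work in \cite[Section 3]{Kir2}; rerunning this analysis with our lex order amounts to checking that the ordering of the variables $x_{i,j}$ adopted here is compatible with the tableau order used by Kiritchenko, so that the leading monomial of each Plücker minor changes only by a translation that does not depend on the section.
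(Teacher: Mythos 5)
Your outline is essentially the paper's own approach: the paper offers no independent argument for this theorem, but merely notes that $\nu^{\rm low}_{x_1 > \cdots > x_N}$ differs only slightly from the valuation in \cite{Kir2} and that the proof of \cite[Theorem 3.3]{Kir2} (identification of the lowest terms of distinguished sections with FFLV lattice points for fundamental weights, then Minkowski superadditivity and a lattice-point/volume count) carries over to this lexicographic order. Your deferral of the hard non-minuscule lowest-term computation to \cite[Section 3]{Kir2}, with only the compatibility of the variable ordering to be rechecked, is exactly the deferral the paper itself makes, so the proposal is correct at the same level of detail as the paper.
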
 

Arrange the elements of $\Pi_C \setminus \Pi^\ast_C$ as 
\begin{equation}\label{eq:arrangement_of_poset_type_C}
\begin{aligned}
(q_1, q_2, \ldots, q_N) \coloneqq (q_1^{(1)}, q_2^{(2)}, q_1^{(2)}, q_3^{(2)}, q_2^{(3)}, \ldots, q_{2n-3}^{(n-1)}, q_2^{(n)}, \ldots, q_n^{(n)}, q_1^{(n)}, q_{n+1}^{(n)}, \ldots, q_{2n-1}^{(n)}).
\end{aligned}
\end{equation}
Then a partition $\Pi_C \setminus \Pi^\ast_C = \mathcal{C} \sqcup \mathcal{O}$ is identified with a partition of the set $\{1, 2, \ldots, N\}$; we regard this set $\{1, 2, \ldots, N\}$ as the set of positions of entries in ${\bm i}_C$. 
Using the arrangement \eqref{eq:arrangement_of_poset_type_C}, we identify $\r^{\Pi_C \setminus \Pi^\ast_C}$ with $\r^N$.
As in type $A_n$ case, each partition $\Pi_C \setminus \Pi^\ast_C = \mathcal{C} \sqcup \mathcal{O}$ gives a map $\Omega_{\mathcal{C}, \mathcal{O}} \colon \c^N \rightarrow Sp_{2n}(\c)$, which induces a birational morphism 
\[\widehat{\Omega}_{\mathcal{C}, \mathcal{O}} \colon \c^N \rightarrow Sp_{2n}(\c)/B_{C_n},\quad {\bm t} \mapsto \Omega_{\mathcal{C}, \mathcal{O}}({\bm t}) \bmod B_{C_n}.\]
Using this birational morphism, we identify the function field $\c(Sp_{2n}(\c)/B_{C_n})$ with the field $\c(t_1, \ldots, t_N)$ of rational functions in $t_1, \ldots, t_N$. 
For each permutation $\sigma \in \mathfrak{S}_{N}$, let $\nu^{(\mathcal{C}, \mathcal{O})}_{t_{\sigma(1)} > \cdots > t_{\sigma(N)}}$ denote the lowest term valuation $\nu^{\rm low}_{t_{\sigma(1)} > \cdots > t_{\sigma(N)}}$ on $\c(t_{\sigma(1)}, \ldots, t_{\sigma(N)})$ with respect to the lexicographic order $t_{\sigma(1)} > \cdots > t_{\sigma(N)}$ (see \cref{ex:lowest_and_highest_term_valuation}).
As in type $A_n$ case, the Gelfand--Tsetlin polytopes $GT_{C_n}(\lambda)$ and the FFLV polytopes $FFLV_{C_n}(\lambda)$ for $\lambda \in P_+$ can be realized as Newton--Okounkov bodies associated with valuations of the form $\nu^{(\mathcal{C}, \mathcal{O})}_{t_{\sigma(1)} > \cdots > t_{\sigma(N)}}$.
We rearrange the coordinates of ${\bm t}$ as 
\begin{align*}
(t_1^\prime, t_2^\prime, \ldots, t_N^\prime) \coloneqq (t_N, t_{N-1}, \ldots, t_{N-n+2}, t_{N-n}, \ldots, t_{(n-1)^2+1}, t_{N-n+1}, t_{(n-1)^2}, \ldots, t_4, t_2, t_3, t_1).
\end{align*}
If $\mathcal{C} = \Pi_C \setminus \Pi^\ast_C$ and $\mathcal{O} = \emptyset$, then it follows that $\Omega_{\Pi_C \setminus \Pi^\ast_C, \emptyset} ({\bm t}) \in \overline{w}_0 U^-_{C_n}$ for all ${\bm t} \in \c^N$, and the map $\c^N \rightarrow \overline{w}_0 U^-_{C_n}$, ${\bm t} \mapsto \Omega_{\Pi_C \setminus \Pi^\ast_C, \emptyset}({\bm t})$, is an isomorphism of varieties. 
In addition, if $A({\bm x}) = \Omega_{\Pi_C \setminus \Pi^\ast_C, \emptyset}({\bm t})$, then we see that 
\begin{equation}\label{eq:transition_type_C_PBW}
\begin{aligned}
x_\ell = 
\begin{cases} 
(-1)^n (t_\ell^\prime + \sum_{1 \leq c \leq k} (-1)^{c-1} t_{\ell-k+c-1}^\prime t_{\ell-k-c}^\prime)&\text{if}\ N-\ell = k^2\ \text{for some}\ 0 \leq k < n,\\ 
(-1)^\ell t_\ell^\prime &\text{otherwise}
\end{cases}
\end{aligned}
\end{equation}
for $1 \leq \ell \leq N$.

\begin{ex}
Let $n = 3$.
Then we have 
\[\Omega_{\Pi_C \setminus \Pi^\ast_C, \emptyset}({\bm t}) = \begin{pmatrix}
-t_7 - t_6 t_8 + t_5 t_9 & t_5 & -t_6 & t_8 & -t_9 & -1 \\
t_5 + t_4 t_6 - t_2 t_8 - t_3 t_9 - t_2 t_4 t_9 & -t_3 - t_2 t_4 & -t_2 & t_4 & 1 & 0 \\
-t_6 - t_1 t_8 - t_2 t_9 - t_1 t_4 t_9 & -t_2 - t_1 t_4 & -t_1 & -1 & 0 & 0 \\
t_8 + t_4 t_9 & t_4 & 1 & 0 & 0 & 0 \\
-t_9 & -1 & 0 & 0 & 0 & 0 \\
1 & 0 & 0 & 0 & 0 & 0 
\end{pmatrix}.\]
\end{ex}

\begin{lem}\label{l:change_of_coordinate_from_x_to_t}
For each $f \in \c(Sp_{2n}(\c)/B_{C_n}) \setminus \{0\}$, it holds that 
\[\nu^{(\Pi_C \setminus \Pi^\ast_C, \emptyset)}_{t_1^\prime > \cdots > t_N^\prime}(f) = \nu^{\rm low}_{x_1 > \cdots > x_N}(f).\]
\end{lem}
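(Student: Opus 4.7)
\medskip

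The plan is to reduce the equality of the two valuations to a \emph{triangularity} property of the change of variables \eqref{eq:transition_type_C_PBW}. First I would examine \eqref{eq:transition_type_C_PBW} monomial by monomial and observe that, with respect to the lexicographic order $t_1^\prime > t_2^\prime > \cdots > t_N^\prime$, the leading (i.e., lowest) term of $x_\ell$ viewed as a polynomial in the $t_j^\prime$'s is $\pm t_\ell^\prime$. In the non-exceptional case this is obvious because $x_\ell = (-1)^\ell t_\ell^\prime$. In the exceptional case $N - \ell = k^2$ with $k \geq 1$, the remaining monomials have the form $t_{\ell-k+c-1}^\prime t_{\ell-k-c}^\prime$ for $1 \leq c \leq k$; each such monomial has a nonzero exponent at index $\ell - k - c < \ell$, so its exponent vector is strictly larger than $(0,\ldots,0,1,0,\ldots,0)$ (with the $1$ at position $\ell$) in the lexicographic order on $\z^N$. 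Thus $x_\ell = \pm t_\ell^\prime + (\text{terms lex-larger than } t_\ell^\prime)$.

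Next I would upgrade this to an analogous statement for arbitrary monomials: for every $\alpha \in \z_{\geq 0}^N$,
\[
x^\alpha = \pm\, (t^\prime)^\alpha + (\text{terms lex-strictly larger than } (t^\prime)^\alpha),
\]
where $x^\alpha = \prod_\ell x_\ell^{\alpha_\ell}$ and similarly for $(t^\prime)^\alpha$. This follows from the previous step by expanding the product and using that the total order respects addition of exponent vectors. Consequently, writing a nonzero polynomial $f \in \c[x_1,\ldots,x_N]$ as $f = \sum_\alpha c_\alpha x^\alpha$ and letting $\alpha_0$ be the lex-minimum of $\{\alpha \mid c_\alpha \neq 0\}$, the expansion of $f$ in the $t^\prime$-variables has coefficient $\pm c_{\alpha_0} \neq 0$ in front of $(t^\prime)^{\alpha_0}$, while every monomial $(t^\prime)^\beta$ with $\beta < \alpha_0$ has coefficient zero (since such a $\beta$ would force some $\alpha \leq \beta < \alpha_0$ with $c_\alpha \neq 0$). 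This gives $\nu^{(\Pi_C \setminus \Pi_C^\ast, \emptyset)}_{t_1^\prime > \cdots > t_N^\prime}(f) = \alpha_0 = \nu^{\rm low}_{x_1 > \cdots > x_N}(f)$.

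Finally, since both maps are genuine valuations on the common function field $\c(Sp_{2n}(\c)/B_{C_n})$ (both coordinate systems parametrize a dense open subvariety of $Sp_{2n}(\c)/B_{C_n}$ and the transition is birational), the equality extends from $\c[x_1,\ldots,x_N]$ to $\c(x_1,\ldots,x_N) = \c(Sp_{2n}(\c)/B_{C_n})$ via the relation $\nu(f/g) = \nu(f) - \nu(g)$, which holds for both valuations.

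The only real obstacle is the bookkeeping in the second paragraph: confirming that each correction monomial in \eqref{eq:transition_type_C_PBW} strictly exceeds $t_\ell^\prime$ in the lex order $t_1^\prime > \cdots > t_N^\prime$. Once that inequality is checked carefully (by comparing the first position where the exponent vectors differ), the whole argument reduces to the standard principle that a unipotent lex-triangular change of variables preserves the lowest term valuation.
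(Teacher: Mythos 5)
Your proposal is correct and follows essentially the same route as the paper: reduce to polynomials in the $x$-variables, use the triangularity of the transition formula \eqref{eq:transition_type_C_PBW} to see that each monomial $x_1^{a_1}\cdots x_N^{a_N}$ has lowest term $\pm (t_1^\prime)^{a_1}\cdots (t_N^\prime)^{a_N}$, and then extend to $\c(Sp_{2n}(\c)/B_{C_n})$ via $\nu(f/g)=\nu(f)-\nu(g)$. The only difference is presentational: you spell out the no-cancellation bookkeeping explicitly, whereas the paper deduces the same conclusion directly from the monomial computation and the valuation axioms.
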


\begin{proof}
By the definition of valuations, it suffices to prove the equality for every nonzero polynomial 
\[f = \sum_{{\bm a} = (a_1, \ldots, a_N) \in \z_{\geq 0}^N} c_{\bm a} x_1^{a_1} \cdots x_N^{a_N} \in \c[x_1, \ldots, x_N] \setminus \{0\},\]
where $c_{\bm a} \in \c$. 
For each $(a_1, \ldots, a_N) \in \z_{\geq 0}^N$, we deduce by \eqref{eq:transition_type_C_PBW} that 
\begin{align*}
\nu^{(\Pi_C \setminus \Pi^\ast_C, \emptyset)}_{t_1^\prime > \cdots > t_N^\prime} (x_1^{a_1} \cdots x_N^{a_N}) &= (a_1, \ldots, a_N)\\
&= \nu^{\rm low}_{x_1 > \cdots > x_N}(x_1^{a_1} \cdots x_N^{a_N}), 
\end{align*}
which implies that $\nu^{(\Pi_C \setminus \Pi^\ast_C, \emptyset)}_{t_1^\prime > \cdots > t_N^\prime}(f) = \nu^{\rm low}_{x_1 > \cdots > x_N}(f)$. 
This proves the lemma.
\end{proof}

The following is an immediate consequence of \cref{t:Kiritchenko_type_C} and \cref{l:change_of_coordinate_from_x_to_t}.

\begin{cor}
For each $\lambda \in P_+$ and $\tau \in H^0 (Sp_{2n}(\c)/B_{C_n}, \mathcal{L}_\lambda) \setminus \{0\}$, the Newton--Okounkov body $\Delta(Sp_{2n}(\c)/B_{C_n}, \mathcal{L}_\lambda, \nu^{(\Pi_C \setminus \Pi^\ast_C, \emptyset)}_{t_1^\prime > \cdots > t_N^\prime}, \tau)$ coincides with the FFLV polytope $FFLV_{C_n}(\lambda)$ up to translations by integer vectors.
\end{cor}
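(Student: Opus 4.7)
The plan is to observe that the corollary really is a two-line deduction: the Newton--Okounkov body $\Delta(X, \mathcal{L}, \nu, \tau)$ in \cref{d:Newton--Okounkov body} is constructed purely from the map $\nu \colon \c(X) \setminus \{0\} \to \z^N$, the line bundle $\mathcal{L}$, and the fixed section $\tau$. Therefore, if two valuations agree \emph{pointwise} on $\c(X) \setminus \{0\}$, the resulting semigroups, cones, and Newton--Okounkov bodies are literally equal (not merely unimodularly equivalent).

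First, I would invoke \cref{l:change_of_coordinate_from_x_to_t}, which asserts precisely this pointwise equality
\[
\nu^{(\Pi_C \setminus \Pi^\ast_C, \emptyset)}_{t_1^\prime > \cdots > t_N^\prime}(f) \;=\; \nu^{\rm low}_{x_1 > \cdots > x_N}(f)
\]
for every nonzero $f \in \c(Sp_{2n}(\c)/B_{C_n})$. Combined with the above observation, this forces
\[
\Delta(Sp_{2n}(\c)/B_{C_n}, \mathcal{L}_\lambda, \nu^{(\Pi_C \setminus \Pi^\ast_C, \emptyset)}_{t_1^\prime > \cdots > t_N^\prime}, \tau) \;=\; \Delta(Sp_{2n}(\c)/B_{C_n}, \mathcal{L}_\lambda, \nu^{\rm low}_{x_1 > \cdots > x_N}, \tau)
\]
for every nonzero $\tau \in H^0(Sp_{2n}(\c)/B_{C_n}, \mathcal{L}_\lambda)$. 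Then I would apply \cref{t:Kiritchenko_type_C} to the right-hand side, which identifies it with $FFLV_{C_n}(\lambda)$ up to translations by integer vectors, yielding the conclusion.

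There is no real obstacle here: the substantive content has already been absorbed into \cref{t:Kiritchenko_type_C} (Kiritchenko's computation adapted to the chosen ordering) and into the explicit change of coordinates \eqref{eq:transition_type_C_PBW} used in \cref{l:change_of_coordinate_from_x_to_t}. The only thing worth emphasizing in the write-up is that, because the valuations coincide as maps and not merely up to some unimodular change of lattice coordinates, one does not even need to track a coordinate transformation on $\z^N$; the two Newton--Okounkov bodies are the same subset of $\r^N$, and the translation ambiguity comes solely from the choice of $\tau$ as recorded in \cref{d:Newton--Okounkov body}.
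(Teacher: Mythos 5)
Your proposal is correct and is exactly the paper's argument: the paper derives this corollary as an immediate consequence of \cref{t:Kiritchenko_type_C} and \cref{l:change_of_coordinate_from_x_to_t}, with the same observation that pointwise equality of the two valuations on $\c(Sp_{2n}(\c)/B_{C_n}) \setminus \{0\}$ forces equality of the semigroups and hence of the Newton--Okounkov bodies.
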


Let us consider the case $\mathcal{C} = \emptyset$ and $\mathcal{O} = \Pi_C \setminus \Pi^\ast_C$. 
In this case, Okounkov \cite{Oko2} realized $GT_{C_n}(\lambda)$ using the valuation $\nu^{(\emptyset, \Pi_C \setminus \Pi^\ast_C)}_{t_N > \cdots > t_1}$ as follows.

\begin{thm}[{\cite[Theorem 2]{Oko2}}]
For each $\lambda \in P_+$ and $\tau \in H^0 (Sp_{2n}(\c)/B_{C_n}, \mathcal{L}_\lambda) \setminus \{0\}$, the Newton--Okounkov body $\Delta(Sp_{2n}(\c)/B_{C_n}, \mathcal{L}_\lambda, \nu^{(\emptyset, \Pi_C \setminus \Pi^\ast_C)}_{t_N > \cdots > t_1}, \tau)$ is unimodularly equivalent to the Gelfand--Tsetlin polytope $GT_{C_n}(\lambda)$. 
\end{thm}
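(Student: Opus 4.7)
My plan is to adapt the proof of \cref{t:main_thm} in Section~\ref{ss:proof} to the symplectic setting. I would first try to reduce to the case of fundamental weights $\lambda = \varpi_k$ by combining the super-additivity \eqref{eq:super_additivity_NO} of Newton--Okounkov bodies with a Minkowski sum decomposition of $GT_{C_n}(\lambda)$. Such a decomposition is not automatic in type $C_n$ (indeed, its failure is part of what motivates this section), so the reduction would itself require establishing a suitable splitting combinatorially on the symplectic Gelfand--Tsetlin patterns, or else carrying general $\lambda \in P_+$ through the entire argument directly.

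The heart of the proof is a crystal-theoretic computation parallel to the one in Section~\ref{ss:proof}. Using the type $C_n$ Kashiwara--Nakashima crystal basis $\mathcal{B}(\varpi_k)$ of each fundamental representation $V(\varpi_k)$ of $\mathfrak{sp}_{2n}(\c)$, I would expand
\[
G^{\rm up}_{\varpi_k}(b)\bigl(\exp(t_1 f_{i_1}) \cdots \exp(t_N f_{i_N}) v_{\varpi_k}\bigr) = \sum_{{\bm a} \in \z_{\geq 0}^N} \frac{t_1^{a_1} \cdots t_N^{a_N}}{a_1! \cdots a_N!}\, G^{\rm up}_{\varpi_k}(b)(f_{i_1}^{a_1} \cdots f_{i_N}^{a_N} v_{\varpi_k})
\]
and determine, for each $b \in \mathcal{B}(\varpi_k)$, the exponent vector $(a_1,\ldots,a_N)$ minimizing the lexicographic order with $t_N > \cdots > t_1$ subject to $\tilde f_{i_1}^{a_1} \cdots \tilde f_{i_N}^{a_N} b_{\varpi_k} = b$. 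Mimicking \cref{l:partial_form_of_tableau_induction,l:forms_of_1_in_chain_coordinates}, I would read ${\bm i}_C$ from its right end and process it block by block, exploiting the palindromic structure of ${\bm i}_C$ to pin down the minimizer inductively. The output should take the form of a symplectic Gelfand--Tsetlin pattern attached to $b$, yielding an explicit affine unimodular bijection between the value set $\{\nu^{(\emptyset, \Pi_C \setminus \Pi^\ast_C)}_{t_N > \cdots > t_1}(\Upsilon(G^{\rm up}_{\varpi_k}(b))) : b \in \mathcal{B}(\varpi_k)\}$ and the lattice points of $GT_{C_n}(\varpi_k)$.

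To conclude, the type $C$ analogue of \cref{l:values_are_distinct} together with \cref{p:property_valuation} would identify the Newton--Okounkov body associated with each $\mathcal{L}_{\varpi_k}$ with the convex hull of these lattice points, and \cref{t:NO_dimension_volume} applied to $k\lambda$ would match volumes, since both sides have Ehrhart polynomial $k \mapsto \dim_\c V(k\lambda)$ (the symplectic analogue of \eqref{eq:marked_chain_order_number_of_lattice_points}). The main obstacle is the passage from fundamental to general $\lambda$: without an analogue of \cref{t:decomposition_property_marked_poset}, assembling a single unimodular transformation that works for all $\lambda$ simultaneously requires a global argument, and it is essentially this combinatorial gap that prevents \cref{t:main_thm} itself from extending uniformly to type $C$.
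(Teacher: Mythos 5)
Before anything else, note that the paper does not prove this statement: it is quoted as \cite[Theorem 2]{Oko2}, and Okounkov's original argument is of a completely different nature (explicit representation-theoretic multiplicity and branching computations for the symplectic group), not a crystal-basis adaptation of \cref{ss:proof}. So the real question is whether your transplanted argument would work, and there it has a concrete gap at its central step. The engine of the proof of \cref{t:main_thm} is the equivalence
\[
G^{\rm up}_{\varpi_k}(b)\bigl(f_{i_1}^{a_1}\cdots f_{i_N}^{a_N}v_{\varpi_k}\bigr)\neq 0
\iff
\tilde f_{i_1}^{a_1}\cdots \tilde f_{i_N}^{a_N}\,b_{\varpi_k}=b ,
\]
which holds in type $A$ because every fundamental $SL_{n+1}(\mathbb{C})$-module is minuscule: all weight multiplicities are one, $\varepsilon_i,\varphi_i\le 1$, and each $f_i$ carries a global basis vector to a scalar multiple of a global basis vector. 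That is what makes \cref{l:values_are_distinct,l:partial_form_of_tableau_induction,l:explicit_computation_of_values_for_minors,l:forms_of_1_in_chain_coordinates} purely crystal-combinatorial and keeps all exponents in $\{0,1\}$. For $Sp_{2n}(\mathbb{C})$, apart from the $2n$-dimensional vector representation the fundamental modules are not minuscule: $i$-strings of length greater than one occur (so $\varepsilon_i,\varphi_i\ge 2$ and divided powers $f_i^{(a)}$ with $a\ge 2$ contribute), weight multiplicities can exceed one, and $f_iG^{\rm low}_{\varpi_k}(b)$ is in general a combination of several global basis vectors rather than a multiple of $G^{\rm low}_{\varpi_k}(\tilde f_i b)$. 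Consequently neither direction of the displayed criterion survives, the lex-minimal exponent of $\Upsilon(G^{\rm up}_{\varpi_k}(b))$ is not computed by the crystal minimization you describe, and even the distinctness of the values (the analogue of \cref{l:values_are_distinct}) is not automatic. The order-sensitivity visible in Table~\ref{type_C_NO_body} for the very partition $(\emptyset,\Pi_C\setminus\Pi^\ast_C)$ --- the same recipe yields $GT$, $NZ$, and even ``$\times$''-type bodies whose value sets do not span the Newton--Okounkov body, depending on the lexicographic order --- shows that the outcome cannot be read off from the palindromic shape of ${\bm i}_C$ by a formal block-by-block induction; genuinely new input is needed exactly where your sketch is vaguest.

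There is a second, smaller gap in the assembly. You correctly observe that \cref{t:decomposition_property_marked_poset} is a type $A$ statement, but you neither establish a Minkowski decomposition of $GT_{C_n}(\lambda)$ into the $GT_{C_n}(\varpi_k)$ nor explain how ``carrying general $\lambda$ through the entire argument'' would go; without one of these, the passage from fundamental weights to arbitrary $\lambda\in P_+$, and the construction of a single unimodular transformation (the theorem asserts unimodular equivalence, not equality up to translation, so an explicit lattice-preserving affine map must be produced) is missing. The volume/Ehrhart matching via \cref{p:property_valuation} and \cref{t:NO_dimension_volume} is fine as far as it goes, but it only closes the argument once the fundamental-weight computation and the reduction step are in place. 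As it stands, the proposal reproduces the scaffolding of \cref{ss:proof} but not the two load-bearing ingredients that make it work in type $A$, which is precisely why the paper cites \cite{Oko2} instead of extending its own method.
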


However, \cref{t:main_thm} cannot be naturally extended to $Sp_{2n}(\c)$ even in the case $n = 2$ as we see below. 
Let $n = 2$, and take $\rho \in P_+$ to be $\varpi_1 + \varpi_2$. 
We consider the following three partitions of $\Pi_C \setminus \Pi^\ast_C = \{q_1, \ldots, q_4\}$:
\begin{align*}
&(\mathcal{C}_1, \mathcal{O}_1) \coloneqq (\emptyset, \Pi_C \setminus \Pi^\ast_C), &&(\mathcal{C}_2, \mathcal{O}_2) \coloneqq (\Pi_C \setminus \Pi^\ast_C, \emptyset), &&(\mathcal{C}_3, \mathcal{O}_3) \coloneqq (\{q_3, q_4\}, \{q_1, q_2\}).
\end{align*}
Then the marked chain-order polytope $\Delta_{(\mathcal{C}_1, \mathcal{O}_1)} (\Pi_C, \Pi^\ast_C, \rho)$ (resp., $\Delta_{(\mathcal{C}_2, \mathcal{O}_2)} (\Pi_C, \Pi^\ast_C, \rho)$) is the Gelfand--Tsetlin polytope $GT_{C_2}(\rho)$ (resp., the FFLV polytope $FFLV_{C_2}(\rho)$).
In addition, the marked chain-order polytope $\Delta_{(\mathcal{C}_3, \mathcal{O}_3)} (\Pi_C, \Pi^\ast_C, \rho)$ is the set of $(a_1, \ldots, a_4) \in \r_{\geq 0}^4$ satisfying the following inequalities:
\[a_1 \leq 1,\quad a_4 \leq 1,\quad a_1 \leq a_2 \leq 2 -a_4,\quad a_3 \leq a_2.\]
Then the marked chain-order polytopes $\Delta_{(\mathcal{C}_k, \mathcal{O}_k)} (\Pi_C, \Pi^\ast_C, \rho)$, $1 \leq k \leq 3$, are all unimodularly equivalent to each other.
For each $1 \leq k \leq 3$, define $\tau^{(\mathcal{C}_k, \mathcal{O}_k)}_{\rho} \in H^0(Sp_4(\c)/B_{C_2}, \mathcal{L}_{\rho})$ as in \eqref{eq:specific_section_tau}. 
Then the Newton--Okounkov bodies $\Delta(Sp_4(\c)/B_{C_2}, \mathcal{L}_{\rho}, \nu_{t_{\sigma(1)} > \cdots > t_{\sigma(4)}}^{(\mathcal{C}_k, \mathcal{O}_k)}, \tau^{(\mathcal{C}_k, \mathcal{O}_k)}_{\rho})$ for $1 \leq k \leq 3$ and $\sigma \in \mathfrak{S}_4$ are given in Table \ref{type_C_NO_body}.
In this table, the Newton--Okounkov bodies are divided into four types: $GT$, $NZ$, $\Delta$, and $\times$.
We mean by ``$GT$'' that the Newton--Okounkov body is unimodularly equivalent to the Gelfand--Tsetlin polytope $GT_{C_2}(\rho)$. 
In addition, the type ``$NZ$'' implies that the Newton--Okounkov body is unimodularly equivalent to the Nakashima--Zelevinsky polytope $NZ_{{\bm i}_C}(\rho)$ associated with $\rho$ and ${\bm i}_C = (1, 2, 1, 2)$ (see \cite[Example 5.10]{FN}). 
More concretely, the Nakashima--Zelevinsky polytope $NZ_{{\bm i}_C}(\rho)$ is given as the set of $(a_1, \ldots, a_4) \in \r_{\geq 0}^4$ satisfying the following inequalities:
\[a_4 \leq 1,\quad a_3 \leq a_4 +1,\quad a_2 \leq \min\{a_3 +1, 2a_3\},\quad 2a_1 \leq \min\{a_2, 2\}.\]
This polytope $NZ_{{\bm i}_C}(\rho)$ has $11$ vertices, and hence it is not unimodularly equivalent to the Gelfand--Tsetlin polytope $GT_{C_2}(\rho)$ which has $12$ vertices.    
We mean by ``$\Delta$'' that the Newton--Okounkov body coincides with the set of $(a_1, \ldots, a_4) \in \r_{\geq 0}^4$ satisfying the following inequalities:
\[a_2 \leq 1,\quad a_4 \leq 1,\quad a_1 \leq 1 +a_4 -a_2,\quad a_3 \leq 2 -a_2 -a_4.\]
This is an integral convex polytope with $12$ vertices, but it is not unimodularly equivalent to the Gelfand--Tsetlin polytope $GT_{C_2}(\rho)$. 
Finally, the type ``$\times$'' means that the Newton--Okounkov body $\Delta(Sp_4(\c)/B_{C_2}, \mathcal{L}_{\rho}, \nu_{t_{\sigma(1)} > \cdots > t_{\sigma(4)}}^{(\mathcal{C}_k, \mathcal{O}_k)}, \tau^{(\mathcal{C}_k, \mathcal{O}_k)}_{\rho})$ is strictly bigger than the convex hull of $16$ points in the set 
\[\{\nu_{t_{\sigma(1)} > \cdots > t_{\sigma(4)}}^{(\mathcal{C}_k, \mathcal{O}_k)}(\tau/\tau^{(\mathcal{C}_k, \mathcal{O}_k)}_{\rho}) \mid \tau \in H^0(Sp_4(\c)/B_{C_2}, \mathcal{L}_{\rho}) \setminus \{0\}\};\]
see also \cref{p:property_valuation} (2).
More precisely, this convex hull has volume $\frac{5}{6}$ for this type $\times$ while the volume of the Newton--Okounkov body is $1$. 
For the previous three types $GT$, $NZ$, and $\Delta$, the Newton--Okounkov body is precisely the convex hull of such $16$ points. 
Summarizing, Newton--Okounkov bodies with different types are not unimodularly equivalent to each other. 
In particular, we see by Table \ref{type_C_NO_body} that the Newton--Okounkov body $\Delta(Sp_4(\c)/B_{C_2}, \mathcal{L}_{\rho}, \nu_{t_{\sigma(1)} > \cdots > t_{\sigma(4)}}^{(\mathcal{C}_3, \mathcal{O}_3)}, \tau^{(\mathcal{C}_3, \mathcal{O}_3)}_{\rho})$ is not unimodularly equivalent to $\Delta_{(\mathcal{C}_3, \mathcal{O}_3)} (\Pi_C, \Pi^\ast_C, \rho)$ for any $\sigma \in \mathfrak{S}_4$.
This implies that \cref{t:main_thm} cannot be naturally extended to the marked chain-order polytope $\Delta_{(\mathcal{C}_3, \mathcal{O}_3)} (\Pi_C, \Pi^\ast_C, \rho)$. 

\begin{table}[h]
\begin{tabular}{|c||c|c|c|} \hline
lexicographic orders & $(\mathcal{C}_1, \mathcal{O}_1)$ & $(\mathcal{C}_2, \mathcal{O}_2)$ & $(\mathcal{C}_3, \mathcal{O}_3)$ \\ \hline
$t_1 > t_2 > t_3 > t_4$ & $NZ$ & $NZ$ & $NZ$ \\ \hline
$t_1 > t_2 > t_4 > t_3$ & $NZ$ & $NZ$ & $NZ$ \\ \hline
$t_1 > t_3 > t_2 > t_4$ & $NZ$ & $\times$ & $\times$ \\ \hline
$t_1 > t_3 > t_4 > t_2$ & $\times$ & $\times$ & $\times$ \\ \hline
$t_1 > t_4 > t_2 > t_3$ & $\times$ & $NZ$ & $NZ$ \\ \hline
$t_1 > t_4 > t_3 > t_2$ & $\times$ & $NZ$ & $NZ$ \\ \hline
$t_2 > t_1 > t_3 > t_4$ & $NZ$ & $\Delta$ & $NZ$ \\ \hline
$t_2 > t_1 > t_4 > t_3$ & $NZ$ & $\Delta$ & $NZ$ \\ \hline
$t_2 > t_3 > t_1 > t_4$ & $GT$ & $\Delta$ & $NZ$ \\ \hline
$t_2 > t_3 > t_4 > t_1$ & $GT$ & $\Delta$ & $NZ$ \\ \hline
$t_2 > t_4 > t_1 > t_3$ & $NZ$ & $\Delta$ & $NZ$ \\ \hline
$t_2 > t_4 > t_3 > t_1$ & $GT$ & $\Delta$ & $NZ$ \\ \hline
$t_3 > t_1 > t_2 > t_4$ & $GT$ & $\times$ & $\times$ \\ \hline
$t_3 > t_1 > t_4 > t_2$ & $GT$ & $\times$ & $\times$ \\ \hline
$t_3 > t_2 > t_1 > t_4$ & $GT$ & $NZ$ & $\times$ \\ \hline
$t_3 > t_2 > t_4 > t_1$ & $GT$ & $NZ$ & $\times$ \\ \hline
$t_3 > t_4 > t_1 > t_2$ & $GT$ & $\times$ & $\times$ \\ \hline
$t_3 > t_4 > t_2 > t_1$ & $GT$ & $\times$ & $\times$ \\ \hline
$t_4 > t_1 > t_2 > t_3$ & $NZ$ & $NZ$ & $NZ$ \\ \hline
$t_4 > t_1 > t_3 > t_2$ & $NZ$ & $NZ$ & $NZ$ \\ \hline
$t_4 > t_2 > t_1 > t_3$ & $NZ$ & $GT$ & $NZ$ \\ \hline
$t_4 > t_2 > t_3 > t_1$ & $GT$ & $GT$ & $NZ$ \\ \hline
$t_4 > t_3 > t_1 > t_2$ & $GT$ & $NZ$ & $NZ$ \\ \hline
$t_4 > t_3 > t_2 > t_1$ & $GT$ & $NZ$ & $NZ$ \\ \hline
\end{tabular}
\vspace{4mm}
   \caption{The Newton--Okounkov bodies $\Delta(Sp_4(\c)/B_{C_2}, \mathcal{L}_{\rho}, \nu_{t_{\sigma(1)} > \cdots > t_{\sigma(4)}}^{(\mathcal{C}_k, \mathcal{O}_k)}, \tau^{(\mathcal{C}_k, \mathcal{O}_k)}_{\rho})$ for $1 \leq k \leq 3$ and $\sigma \in \mathfrak{S}_4$.}
	\label{type_C_NO_body}
\end{table}

\end{document}